\theoremstyle{plain}
\newtheorem{theorem}{Theorem}[section]
\newtheorem{lemma}[theorem]{Lemma}
\newtheorem{proposition}[theorem]{Proposition}
\newtheorem{corollary}[theorem]{Corollary}
\theoremstyle{definition}
\newtheorem{definition}[theorem]{Definition}
\newtheorem{example}[theorem]{Example}
\newtheorem{question}[theorem]{Question}
\theoremstyle{remark}
\newtheorem{remark}[theorem]{Remark}
\newtheorem*{To show}{To show}
\title{Conjugacy and Dynamics in Almost Automorphism Groups of Trees}
\author{Gil Goffer, Waltraud Lederle}
\let\phi\varphi
\renewcommand\emptyset\varnothing
\renewcommand{\restriction}{|}
\newcommand{\T}{\mathcal{T}}
\newcommand{\F}{\mathcal{F}}
\newcommand{\dT}{\partial \T}
\DeclareMathOperator\Aut{Aut}
\newcommand{\AutT}{\operatorname{Aut}(\T)}
\newcommand{\AAutT}{\operatorname{AAut}(\T)}
\newcommand{\Vertices}{\operatorname{Vert}}
\newcommand{\Edge}{\operatorname{Edge}}
\newcommand{\Fix}{\operatorname{Fix}}
\newcommand{\foris}[3]{
	\ifthenelse{\equal{#1}{}}%
	{}%
	{#1\colon}%
	\T \setminus #2 \to \T \setminus #3}%
\newcommand{\TwoT}{\T\setminus T}
\newcommand{\tp}[3]{[\overline{#1},#2,#3]}
\newcommand{\Tdk}{\T_{d,k}}
\newcommand{\Tdm}{\T_{d,m}}
\newcommand{\AutTdm}{\Aut(\Tdm)}
\DeclareMathOperator\AAut{AAut}
\newcommand{\AAutTdk}{\AAut(\Tdk)}
\newcommand{\V}{V}
\newcommand{\Sym}{\operatorname{Sym}}
\newcommand{\LL}{\mathcal{L}}
\newcommand{\Att}{\operatorname{Att}}
\newcommand{\Rep}{\operatorname{Rep}}
\newcommand{\Wan}{\operatorname{Wan}}
\DeclareMathOperator\OT{OT}
\newcommand{\BOT}{\mathrm{BOT}}
\DeclareMathOperator\eell{St}
\DeclareMathOperator\supp{Supp}
\newcommand{\Ell}{\mathcal{E}}
\newcommand{\Hyp}{\mathcal{H}}
\newcommand{\nh}{neighborhood}
\newcommand{\ehdecom}{EH decomposition}
\begin{document}
	
	\maketitle
	\unmarkedfntext{The first author was supported by a fellowship from the Ariane de Rothschild Women Doctoral Program. The second author was partially supported by Israel Science Foundation grant ISF 2095/15 and the Early Postdoc.Mobility grant number 175106 by the Swiss National Science Foundation. She also wants to thank the Weizmann Institute, where part of this work was completed, for its hospitality.}

	\begin{abstract}
		We determine when two almost automorphisms of a regular tree are conjugate. This is done by combining the classification of conjugacy classes in the automorphism group of a level-homogeneous tree by Gawron, Nekrashevych and Sushchansky and the solution of the conjugacy problem in Thompson's $V$ by Belk and Matucci. We also analyze the dynamics of a tree almost automorphism as a homeomorphism of the boundary of the tree.
	\end{abstract}

\section{Introduction}
	
	When are two elements of a group conjugate? Solving this question is a fundamental step in understanding a group. A classical framework in which it is addressed is the following setup. Given a finite group presentation $G= \langle S \mid R \rangle$, is there an algorithm that decides for two words with letters in $S$ whether they are conjugate or not? The answer is known to be ``yes" for Gromov hyperbolic groups, braid groups and others; but also many groups with unsolvable conjugacy problem are known.

In the current work we are looking at one of the most important examples in the theory of totally disconnected, locally compact groups, namely the almost automorphism group of a regular tree. We will give a precise definition of this group later. Roughly, its elements are equivalence classes of isomorphisms between subforests with finite complement.
The almost automorphism group of a regular tree was originally defined by Neretin \cite{n92} who studied its unitary representations. What makes it special is that it is the first known example of a simple, locally compact group not containing any lattices \cite{k99} \cite{bcgm12}. This result was recently strengthened by Zheng \cite{zhe19}, who showed that it is the first locally compact and compactly generated, non-discrete group not admitting any non-trivial IRS.

Let $\Tdk$ be a quasi-regular tree such that all but one vertices have valency $d+1 \geq 3$ and the remaining vertex has valency $k \geq 1$. Let $\AAutTdk$ be its almost automorphism group. There are two subgroups that are of specific importance.
The first is the automorphism group $\Aut(\Tdk)$ of $\Tdk$, which is open in $\AAutTdk$. The second is the Higman--Thompson group $V_{d,k}$, which is a countable dense subgroup $\AAutTdk$. For both of these subgroups, conjugacy has been solved.
Gawron, Nekrashevych and Sushchansky \cite{gns01} give a full description of conjugacy classes in $\Aut(\Tdk)$.
Barker, Duncan and Robertson \cite{bdr16} provide an algorithm solving the conjugacy problem in $V_{d,k}$ based on an algorithm described by Higman \cite{h74}.
The special case of $V=V_{2,2}$ has bean dealt with by Salazar-D\'iaz \cite{sd10} as well as Belk and Matucci \cite{bema14}. It is not hard to see that their solutions extend to $V_{d,k}$.
For $\AAutTdk$ we combine two different approaches. The first is the solution of conjugacy in $\Aut(\Tdk)$ via orbital types by Gawron, Nekrashevych and Sushchansky. The second is the solution of conjugacy in Thompson's $V$ via abstract strand diagrams by Belk and Matucci. We make heavy use of the notions of revealing pairs and rollings by Brin \cite{brin04} and Salazar-D\'iaz.

Closely related to conjugacy is dynamics. Namely, if $G$ is a group acting on a topological space $X$ and $g,h \in G$ are conjugate via an element $a \in G$ then the two dynamical systems $(X,g)$ and $(X,h)$ are topologically conjugate. In particular $a$ maps $g$-attracting points to $h$-attracting points, $g$-wandering points to $h$-wandering points, and so on. Recall that a wandering point is a point having a neighbourhood $U$ that is disjoint from $g^n(U)$ for all $n \geq 1$. For $G=\AAutT$ and $X=\dT$ the set of wandering points $\Wan(g)$ of every element $g$ is open and its closure is clopen and $g$-invariant. We can therefore write $g$ as a product $g=g_e g_h$, where $g_h|_{\overline{\Wan(g)}} := g|_{\overline{\Wan(g)}}$ and $g_h|_{\dT \setminus \overline{\Wan(g)}} := id$. A crucial observation is that determining whether $g$ and $h$ are conjugate can be reduced to separately checking whether $g_h$ and $h_h$ respectively $g_e$ and $h_e$ are conjugate, see Proposition  \ref{prop:conj_can_be_checked_on_ell_and_hyp_seperately}. This leaves us with two problems: Solving conjugacy for elements that do not have any wandering points, so-called \emph{elliptic} elements, and elements that act trivially outside the closure of the wandering points, we call them \emph{hyperbolic}. Le Boudec and Wesolek \cite{lbw19} previously divided tree almost automorphisms into elliptic elements and translations. What we call hyperbolic is a special case of their translations.

For a forest automorphism, we construct a labelled forest, which we call \emph{orbital type}. It is nothing else than the orbital type by Gawron, Nekrashevych and Sushchansky for a forest automorphism instead of a tree automorphism. Let $\F$ be a subforest of $\AAutTdk$ with finite complement.
The orbital type of a forest automorphism $\phi \in \Aut(\F)$ is the quotient forest $\langle \phi \rangle \setminus \F$, where each vertex in the quotient is labelled by the cardinality of its pre-image under the quotient map $\F \to \langle \phi \rangle \setminus \F$. Elliptic elements can be represented by forest automorphisms, see Lemma \ref{lem:ell=phiTT}, and we show that two elliptic elements $g$ and $h$ are conjugate if and only if the orbital types of such representatives are the same after removing a finite subgraph, see Theorem \ref{thm:conjugacy_elliptic}.

For a hyperbolic element, we show that it is conjugate to a sufficiently close element in the Higman--Thompson group $V_{d,k}$. What ``sufficiently'' means in this context leads us to the notion of revealing pairs by Brin \cite{brin04}. Having reduced ourselves to $V_{d,k}$ allows us to apply the results by Belk--Matucci.
They associate to every Higman--Thompson element a diagram, which we call a \emph{BM-diagram}, and prove that conjugacy is completely determined by this diagram.
A BM-diagram consists of three objects: a finite directed graph $D$ of a specific form, a cohomology class in $H^1(D,\mathbb{Z})$, and for every vertex an order on the edges adjacent to it. We prove that if two Higman--Thompson elements are close enough to one another, their reduced BM-diagrams differ only in these orders on the edges; and two hyperbolic elements in $\AAutTdk$ are conjugate if and only if sufficiently close Higman--Thompson elements have diagrams differing only in these edges' orders, see Theorem \ref{thm:hyperblolic_conjugacy}. We also explain how to read the dynamics of an element off its diagram (Theorem \ref{thm:read off dynamics}).
As an application we determine which hyperbolic elements are conjugate to a translation in $\AutT$, see Corollary~\ref{cor:hyperbolic conjugate to AutT}. The corresponding problem for elliptic elements seems to be complicated.

\begin{question}
 Find nice conditions under which an elliptic tree almost automorphism is conjugate to a tree automorphism.
\end{question}

Lastly, we show that an almost automorphism has open conjugacy class if and only if the set of wandering points is dense in $\dT$ (Corollary \ref{cor:open conj class}), and we determine closures of conjugacy classes for elliptic and hyperbolic elements.
Putting the elliptic and hyperbolic case back together seems to be surprisingly complicated.

\begin{question}
Let $g$ and $h$ be tree almost automorphisms that are neither elliptic nor hyperbolic. When is $g$ in the closure of the conjugacy class of $h$?
\end{question}
	
\section{Preliminaries} \label{sec: preliminaries}

	\subsection{Trees and their almost automorphisms}

All graphs in the current work are directed. All trees come with a root, which enables us to talk about children, descendants and ancestors of vertices.
Unless explicitly mentioned otherwise, edges in a tree point away from the root. For a tree $\T$ we denote its set of vertices by $\Vertices(\T)$ and its set of edges by $\Edge(\T)$.
Most of the time the tree at hand will be the $(d,k)$-quasiregular rooted tree $\Tdk$, whose root has $k$ children and whose other vertices all have $d$ children.

A \emph{caret} in a tree $\T$ is a finite subtree consisting of a vertex, the edges connecting it to its children and its children, see Fig. \ref{fig:caret}.
\begin{figure}[H]
    \centering
    \includegraphics[width=50mm]{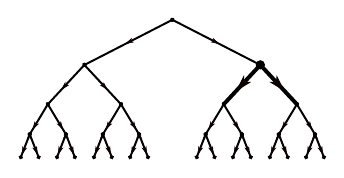}
    \caption{The thick lines indicate a caret.}
    \label{fig:caret}
\end{figure}
A subtree of $\T$ will be called \emph{complete} if it is a union of carets. Unless we explicitly state otherwise we will assume that complete subtrees contain the root, and as a consequence, all of the root's children.
When we form differences of complete subtrees, we always mean caret subtraction. This means that for subtrees $T'$ and $T$ of a tree $\T$ the difference $T' \setminus T$ consists of all carets of $T'$ that are not in $T$.
The maximal subtrees of $T' \setminus T$ we call \emph{components}.

Let $\T$ be an infinite tree. The \emph{boundary} of $\T$, denoted $\dT$, is as usual defined as the set of all infinite directed paths starting at the root.
Let $\T$ be an infinite tree and $x$ a vertex of $\T$. We denote by $\T_x$ the subtree of $\T$ with root $x$, and vertices being all descendants of $x$. Its boundary $\dT_x$ can be seen as a subset of $\dT$ in an obvious way, and all subsets of $\dT$ of the form $\dT_x$ form a basis of the topology of $\dT$.
If $x$ is not the root, we call such a basic open set a \emph{ball}, as a reference to the balls in the usual metric on $\dT$.

For a subtree $T$ of $\T$, we denote by $\LL T$ the set of leaves of $T$. Note that if $T$ is a finite complete subtree of $\T$, then $\{\dT_x\}_{x\in \LL T}$ is a finite clopen partition of $\dT$ into balls.

We denote the automorphism group of a tree $\T$ by $\AutT$, and for a finite subtree $T$ of $\T$ we write $\Fix(T)$ for the subgroup of $\AutT$ that fixes $T$ pointwise. Note that even though $\T$ is rooted, we will not assume that $\AutT$ necessarily fixes this root.

\begin{definition}
    Let $\T$ be an infinite tree without leaves and without isolated points in the boundary.
    An \emph{almost automorphism} of $\T$ is the equivalence class of a forest isomorphism $\foris{\phi}{T_1}{T_2}$, where $T_1$ and $T_2$ are complete finite subtrees of $\T$, and the equivalence relation is given by identifying two forest isomorphisms that agree outside of a finite set.
    
    \begin{figure}[H]
        \centering
        \includegraphics[width=\textwidth]{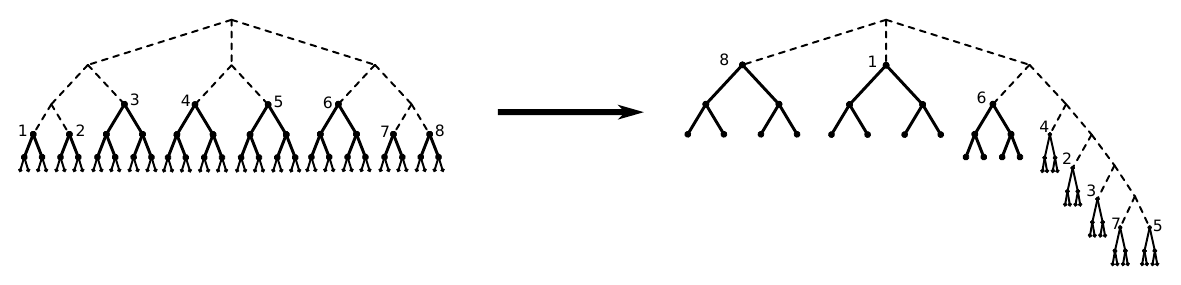}
        \caption{A representative of a tree almost automorphism. The dashed-lined trees are $T_1$ and $T_2$, and the numbers indicate how $\T\setminus T_1$ is mapped onto $\T\setminus T_2$.}
        \label{fig:tree almost automorphism}
    \end{figure}
\end{definition}

We refer to \cite{lb17} for a more detailed introduction to almost automorphisms.
The product of two almost automorphisms is formed by composing two representatives that can be composed as forest isomorphisms. 
Such representatives can always be found since for all almost automorphisms $g$ and $h$ and every large enough finite complete subtree $T$ there exist finite complete subtrees $T'$ and $T''$ and representatives $\foris{\phi}{T'}{T}$ and $\foris{\psi}{T}{T''}$ for $g$ and $h$, respectively.
The set of all almost automorphisms then forms a group, denoted $\AAutT$.
Every tree automorphism has an obvious interpretation as tree almost automorphism and it is not hard to see that with this interpretation $\Aut(\T) \leq \AAutT$.
This inclusion is used to define a group topology on $\AAutT$; we take $\{\Fix(T) \mid T \subset \T \text{ finite subtree}\}$ as basis of identity neighbourhoods in $\AAutT$. Clearly $\Aut(\T)$ is an open subgroup of $\AAutT$.

\begin{remark}\label{rem:iso_of_aauts}
    Let $\T$ and $\T'$ be trees such that there exist finite complete subtrees $T \subset \T$ and $T' \subset \T'$ and a forest isomorphism $\theta \colon \T \setminus T \to \T' \setminus T'$. Then $\theta$ induces an isomorphism $\AAutT \to \AAut(\T')$.
\end{remark}

We now turn our attention to a special subgroup of $\AAutT$.
A \emph{plane order} of $\T$ is a collection of total orders $\{<_x \mid x \in \Vertices(\T)\}$, where $<_x$ is a total order on the children of $x$. An almost automorphism is called \emph{locally order-preserving} if it has a representative $\foris{\phi}{T_1}{T_2}$ that maps the children of $x$ order-preservingly to the children of $\phi(x)$ for every vertex $x$ of $\T \setminus T_1$. This representative is then called \emph{plane order preserving}.

\begin{definition}
    The \emph{Higman--Thompson group} $V_{d,k}$ is the subgroup of $\AAutTdk$ consisting of all locally order-preserving almost automorphisms.
\end{definition}

It is not difficult to see that $V_{d,k}$ is dense in $\AAutTdk$ and that, up to conjugating with an element of $\Aut(\Tdk)$, it does not depend on the choice of the plane order. We can therefore fix a plane order of $\Tdk$ for the rest of the article.
For more information about Higman--Thompson groups, which are interesting far beyond being dense in $\AAutTdk$, consult \cite{h74}, \cite{b87} or \cite{cfp96}.

\paragraph{Translating boundary balls.}
Let $\T=\Tdk$. The group $\AAutT$ acts on $\dT$ in an obvious way. Recall that a boundary ball is a subset of the form $\dT_x\subset \dT$, where $x$ is not the root. Every boundary ball is the disjoint union of $d$ smaller boundary balls via replacing $x$ by its children. By induction, for any $m\equiv 1 \mod{d-1}$ it is also the union of $m$ balls.

\begin{lemma} \label{lem: translating balls} Let $\T=\Tdk$. Then following statements hold.
	\begin{enumerate}
	 \item \label{item:number of balls is preserved mod d-1}
	 Let $U \subset \dT$ be a clopen subset. Let $U= B_1 \sqcup \dots \sqcup B_{n_1} = C_1 \sqcup \dots \sqcup C_{n_2}$ be two partitions of $U$ into boundary balls. Then $n_1 \equiv n_2 \mod{d-1}$.
	    
	 \item \label{item:can map U1 to U2 while fixing W}
	 Let $U_1,U_2 \subset \dT$ be clopen non-empty proper subsets that can be partitioned into $n_1$ and $n_2$ boundary balls respectively. Let $W$ be a proper, possibly empty, clopen subset of $\dT \setminus (U_1 \cup U_2)$.
	    Then, there exists $g \in \AAutT$ fixing $W$ pointwise with $g(U_1)=U_2$ if and only if $n_1 \equiv n_2 \mod{d-1}$.
	\end{enumerate}
\end{lemma}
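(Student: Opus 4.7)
For part (1), the plan is to exploit that any two boundary balls $\partial\T_x$ and $\partial\T_y$ are either disjoint or nested, since a nonempty intersection forces one of $x,y$ to be an ancestor of the other. Therefore the two given partitions admit a common refinement $\mathcal{R} = \{B_i \cap C_j : B_i \cap C_j \neq \emptyset\}$, which is itself a partition of $U$ into boundary balls. Going from $\{B_i\}_{i=1}^{n_1}$ to $\mathcal{R}$ amounts to replacing each $B_i = \partial\T_{x_i}$ by a ball partition of itself; such a partition corresponds to the leaves of a finite complete subtree of $\T_{x_i}$, and since each added caret replaces one leaf by $d$ leaves the number $s_i$ of parts satisfies $s_i \equiv 1 \pmod{d-1}$. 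Summing, $|\mathcal{R}| - n_1 = \sum_i (s_i-1) \equiv 0 \pmod{d-1}$, and the analogous computation for $\{C_j\}$ yields $n_1 \equiv |\mathcal{R}| \equiv n_2 \pmod{d-1}$.

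For the necessity direction of (2), I would suppose $g \in \AAutT$ satisfies $g(U_1)=U_2$ and fix a representative $\phi\colon \T \setminus T_1 \to \T \setminus T_2$. Using (1), one can refine the given partition of $U_1$ into balls $\partial\T_{x_i}$ with each $x_i$ a vertex of $\T \setminus T_1$, without affecting the cardinality modulo $d-1$. Then $\phi$ maps this refined partition bijectively to a ball partition of $U_2$ of the same size, and a second appeal to (1) inside $U_2$ delivers $n_1 \equiv n_2 \pmod{d-1}$.

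For the sufficiency direction of (2), the strategy is to promote the relation $g(U_1) = U_2$ to a consistent decomposition of the whole boundary. Setting $V_i := \dT \setminus (U_i \cup W)$, I write $\dT = U_1 \sqcup V_1 \sqcup W = U_2 \sqcup V_2 \sqcup W$, fix ball partitions of $V_1, V_2, W$ of sizes $v_1, v_2, w$, and apply (1) to $\dT$ to conclude $v_1 \equiv v_2 \pmod{d-1}$. Then, using the observation stated just before the lemma that any ball can be split into any $m \equiv 1 \pmod{d-1}$ sub-balls, I refine the partitions of $U_1$ and $V_1$ to match the sizes of those of $U_2$ and $V_2$ respectively. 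The resulting ball partitions of $\dT$ on each side correspond to finite complete subtrees $T_1, T_2 \subset \T$, and any bijection of balls that sends $U_1$-balls to $U_2$-balls, $V_1$-balls to $V_2$-balls, and fixes $W$-balls, combined with an isomorphism on each matched pair of subtrees (every non-root subtree of $\T$ being isomorphic to $\T_d$) and the identity on the $W$-components, yields a forest isomorphism $\T \setminus T_1 \to \T \setminus T_2$ representing the required $g$.

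The only point calling for care is the passage between ball partitions of $\dT$ and finite complete subtrees: one must verify that the defining vertices of each refined ball partition form the leaf set of such a subtree. This is automatic, since in both directions the partitions arise from subdividing existing complete-subtree partitions. Beyond that, everything reduces to the congruence invariant from (1).
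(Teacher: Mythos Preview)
Your proposal is correct and follows essentially the same route as the paper: both use the nesting property of balls to pass to a common refinement in part~(1), and in part~(2) both build the desired $g$ by matching ball-counts separately on $U_i$, on $V_i=\partial\T\setminus(U_i\cup W)$, and on $W$, then assembling a forest isomorphism between the corresponding complete subtrees. One small imprecision in your sufficiency argument: you say you refine the partitions of $U_1$ and $V_1$ to match those of $U_2$ and $V_2$, but if $n_1>n_2$ or $v_1>v_2$ you must refine on the other side instead (or refine both to a common larger value); the paper phrases this symmetrically. This is a wording issue, not a gap.
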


\begin{proof}
\begin{enumerate}
    \item Since every ball is a disjoint union of $d-1$ smaller balls, and since two balls are either disjoint or contained in one another, we can assume without loss of generality that the partition $C_1,C_2,\dots,C_{n_2}$ is a refinement of $B_1,B_2,\dots,B_{n_1}$. Under this assumption, it suffices to prove the case where $n_1=1$. Let $x$ be the vertex with $B_1=\dT_x$ and $x_1,\dots,x_{n_2}$ be the vertices with $C_i = \dT_{x_i}$. The fact $\T_x=\bigsqcup_i \T_{x_i}$ implies that $x_1,\dots,x_{n_2}$ are the leaves of a complete finite subtree rooted at $x$. Such a subtree exists only if $n_2 \equiv 1 \mod{d-1}$.

    \item We first prove the "only if"-direction. Let $g \in \AAutT$ with $g(U_1)=U_2$. Up to replacing the partition of $U_1$ by a refinement, we can assume that the $g$-image of each ball in the partition of $U_1$ is again a ball. This gives a partition of $U_2$ into $n_1$ balls. The fact that $n_1 \equiv n_2 \mod{d-1}$ now follows from Part \ref{item:number of balls is preserved mod d-1}. 

    For the "if"-direction, form two partitions $P_1$ and $P_2$ of $\dT$ into balls, satisfying: (a) each ball in $P_i$ is contained in either $U_i$, $W$ or $\dT \setminus (U_i \cup W)$; (b) $U_i$ is partitioned by $P_i$ into $n_i$ balls; and (c) $P_1$ and $P_2$ agree on $W$. By refining $P_1$ in $U_1$ (resp. $P_2$ in $U_2$) we can further assume that $n_1=n_2$.
    By Part \ref{item:number of balls is preserved mod d-1} the total number of balls in $P_1$ equals, $\mod{d-1}$, to the total number in $P_2$.
    Refine the partitions to make them have the same total number of parts, without affecting properties (a),(b) and (c). Indeed, this can be done by refining $P_i$ only over $\dT \setminus (U_i \cup W)$, which is non-empty by assumption.
    We are now ready to construct $g$. Let $T_1,T_2$ be the complete finite subtrees of $\T$ that correspond to the partitions $P_1$ and $P_2$ respectively. Take $g$ to be the almost automorphism induced by $\foris{\phi}{T_1}{T_2}$, mapping $U_1$ to $U_2$, $\dT \setminus (U_1 \cup W)$ to $\dT\setminus (U_2 \cup W)$ and fixing $W$ pointwise.
\end{enumerate}
\end{proof}

\subsection{Tree pairs}\label{subsec: tree pairs}

Historically, tree pairs were defined before tree almost automorphisms.

\begin{definition}\label{def:tree_pair}
    A \emph{tree pair} consists of two finite complete subtrees $T_1$ and $T_2$ of $\T$ together with a bijection $\kappa \colon \LL T_1 \to \LL T_2$ between their leaves. We denote it by $[\kappa,T_1,T_2]$.
\end{definition}

\begin{remark}\label{rem:kinds_of_leaves}
    Let $T_1$ and $T_2$ be two complete finite subtrees of $\T$. There are three different kinds of leaves of $T_1$, namely
    \begin{enumerate}
        \item leaves of $T_1$ that are also leaves of $T_2$, these are called \emph{neutral leaves}.
        \item leaves of $T_1$ that are interior vertices of $T_2$. They are roots of components of $T_2 \setminus T_1$; and
        \item leaves of $T_1$ that do not belong to $T_2$ at all. They are leaves of components of $T_1 \setminus T_2$.
    \end{enumerate}
    The analogous statement holds for leaves of $T_2$.
\end{remark}

We wil often consider $\kappa$-orbits in the leaves of $T_1$ and $T_2$.

\begin{definition}\label{def:maximal_chains}
    Let $P=[\kappa,T_1,T_2]$ be a tree pair. Let $x_0,\dots,x_n \in \LL T_1 \cup \LL T_2$. We call $(x_0,\dots,x_n)$ a \emph{maximal chain} of $P$ if it is an orbit under the partial action of $\kappa$. In other words $x_{i}=\kappa(x_{i-1})$ for $i=1,\dots,n$ and either
    \begin{enumerate}
        \item  $x_0 \notin \LL T_2$ and $x_n \notin \LL T_1 $; or
        \item  $\kappa(x_n)=x_0$.
    \end{enumerate}
    A maximal chain is called
    \begin{enumerate}
        \item an \emph{attractor chain}, and $x_n$ an \emph{attractor of period $n$}, if $x_n$ is a descendant of $x_0$;
        \item a \emph{repeller chain}, and $x_0$ a \emph{repeller of period $n$}, if $x_0$ is a descendant of $x_n$;
        \item a \emph{periodic chain} and each of $x_0,\dots,x_n$ a \emph{periodic leaf} if $x_0=\kappa(x_n)$; and
        \item a \emph{wandering chain}, and $x_0$ a \emph{source} and $x_n$ its corresponding \emph{sink}, if $x_0 \notin T_2$ and $x_n \notin T_1$.
    \end{enumerate}
\end{definition}

In Definition \ref{def:maximal_chains} we did not give a name to maximal chains that start at the root of a component and end in a vertex that is not their descendant or vice versa. This is because we prefer to consider tree pairs that do not have these kinds of maximal chains, as in the following definition due to Brin \cite{brin04}.

\begin{definition}
    Let $P=[\kappa,T_1,T_2]$ be a tree pair. It is called a \emph{revealing pair} if
    \begin{enumerate}
        \item every component of $T_1 \setminus T_2$ contains a (unique) repeller; and
        \item every component of $T_2 \setminus T_1$ contains a (unique) attractor.
    \end{enumerate}
\end{definition}

\begin{example}
    Figure \ref{fig:revealing pair} shows an example of a revealing pair. The gray tree is the common tree $T_1 \cap T_2$. Attractors and repellers are underlined, periodic leaves are circled, and a half moon marks the root of a component.
  	\begin{figure}
    \centering
    \includegraphics[scale=0.8]{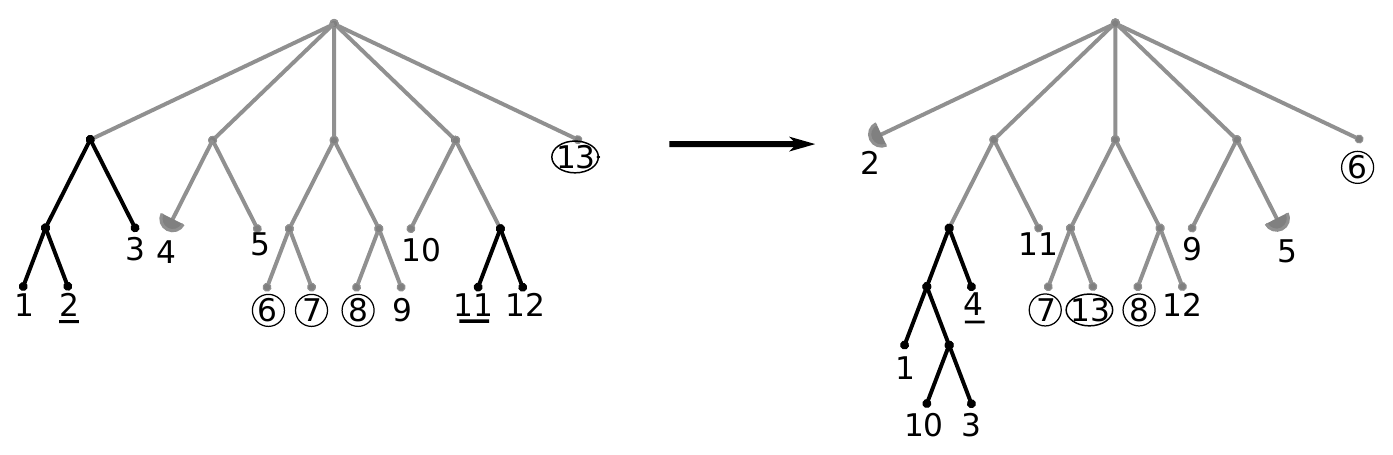}
    \caption{a revealing pair}%
\label{fig:revealing pair}
\end{figure}
\end{example}
    
\begin{remark}\label{rem:four_types_of_leafs}
    It is not hard to see that a tree pair is a revealing pair if and only if all of its chains are attractor, repeller, periodic and wandering chains. A detailed proof can be found in \cite{sd10}, Claim 5.
\end{remark}

Every tree almost automorphism defines many tree pairs.

\begin{definition}
    Let $g \in \AAutT$. Let $T_1$ and $T_2$ be complete finite subtrees of $\T$ such that there exists a forest isomorphism $\foris{\phi}{T_1}{T_2}$ representing $g$. Then we denote the restriction of $\phi$ to the leaves of $T_1$ by $\overline{g} := \varphi|_{\LL T_1} \colon \LL T_1 \to \LL T_2$, and the tree pair $\tp{g}{T_1}{T_2}$ we call a \emph{tree pair associated to $g$}.
\end{definition}

It is an easy exercise to show that $\tp{g}{T_1}{T_2}$ depends, as the notation suggests, only on $g$ and on the trees $T_1$ and $T_2$, but not on $\phi$.

\begin{remark}\label{rem:tree pairs define topology}
    Note that for every tree pair $P$ the set of tree almost automorphisms $g$ such that $P$ is a tree pair associated to $g$ is open. In fact, the collection of open sets of this form is a basis for the topology on $\AAutT$.
\end{remark}

In the other direction, given a tree pair we can associate it with an almost automorphism. However, going in this direction, more choice is required. We will, by convention, take a Higman--Thompson element.

\begin{definition}\label{def:HT_induced_by_tree_pair}
    Let $P=[\kappa,T_1,T_2]$ be a tree pair. The almost automorphism \emph{induced by} $P$ is the Higman--Thompson element represented by the unique plane order preserving forest isomorphism $\foris{\phi}{T_1}{T_2}$ such that $\phi|_{\LL T_1}=\kappa$.
\end{definition}

Let $g \in \AAutT$ and let $\foris{\phi}{T_1}{T_2}$ be a forest isomorphism representing $g$. Let $x \in \LL T_1$ and let $T$ be a complete finite subtree rooted at $x$.
It is obvious how to enlarge $T_1$ with $T$ to get a tree pair for $g$, namely simply take the tree pair $\tp{g}{T_1 \cup T}{T_2 \cup \phi(T)}$, where $\overline{g}$ is the restriction of $\phi$ to the leaves of $T_1 \cup T$.

If we consider a maximal chain $(x_0,\dots,x_n)$, it can be useful to enlarge the tree pair in such a way that a pre-determined tree is attached to $x_0$, but no components are added under $x_1,\dots,x_{n-1} \in \LL T_1 \cap \LL T_2$.
This leads us to the following notion introduced by Salazar-D\'iaz \cite{sd10}, Definition 22.

\begin{definition}\label{def:rolling}
    Let $g \in \AAutT$, let $\foris{\phi}{T_1}{T_2}$ be a representative of $g$
    and let $P := \tp{g}{T_1}{T_2}$ be a tree pair associated to $g$. Let $(x_0,\dots,x_n)$ be a maximal chain of $P$.
    \begin{enumerate}
        \item Let $T$ be a complete finite subtree of $\T$ that does not contain the root, but is rooted at $x_0$. The \emph{forward $g$-rolling of $P$ with $T$ along $(x_0,\dots,x_n)$} is the tree pair
     $\tp{g}{T_1 \cup T \cup \phi(T) \cup \dots \cup \phi^{n-1}(T)}{T_2 \cup \phi(T) \cup \dots \cup \phi^{n}(T)}$.
     \item Let $T$ be a complete finite subtree of $\T$ that does not contain the root, but is rooted at $x_n$. The \emph{backward $g$-rolling of $P$ with $T$ along $(x_0,\dots,x_n)$} is the tree pair
     $\tp{g}{T_1 \cup \phi^{-1}(T) \cup \dots \cup \phi^{-n}(T)}{T_2 \cup T \cup \phi^{-1}(T) \cup \dots \cup \phi^{-(n-1)}(T)}$.
    \end{enumerate}
\end{definition}

By convention, if we do not specify the direction of the rolling, we mean a forward rolling except in the case of a repeller chain.

\begin{example}
 Figure \ref{fig:rolling} gives an example of a backward rolling for the Higman--Thompson element $g$ induced by the tree pair $P=[\kappa,T_1,T_2]$ depicted.
 The maximal chain along which the rolling is done is $c=(\kappa^{-1}(7),7,6,5)$ expressed in labels in $T_2$, which is the same as $c=(7,6,5,\kappa(5))$ expressed in labels in $T_1$.
 The tree $T$ is the gray subtree of the first picture, which hangs at the vertex  $5 \in T_2$.
 Performing the $g$-backward rolling of $P$ with $T$ along $c$ includes gluing copies of $T$ to the leaves $5,6$ and $7$ in $T_1$, and to the leaves $5,6$ and $7$ in $T_2$. 
 
\begin{figure}
    \centering
    \subfloat[a tree pair]{\includegraphics[scale=1]{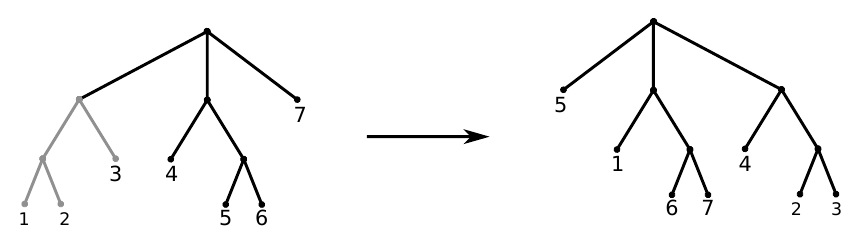}}

    \subfloat[its rolling]{\includegraphics[scale=1]{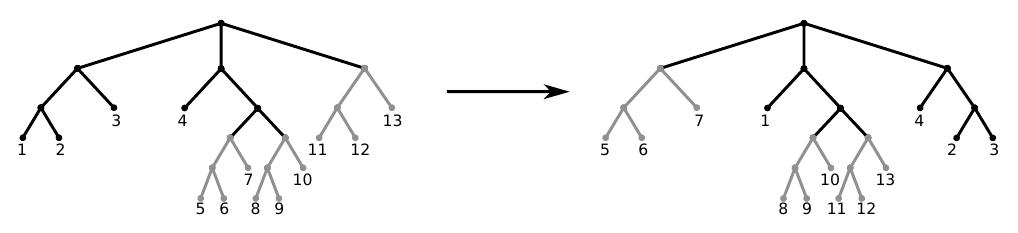}}%
    \caption{A tree pair and its rolling with a component of $T_1 \setminus T_2$.}%
\label{fig:rolling}
\end{figure}
Observe that $P$ is not a revealing pair. Indeed, $5\in T_2$ is the root of a component of $T_1\setminus T_2$, which contains no repeller.
However, the rolling of $P$ is a revealing pair, which is an illustration of the proof of Lemma \ref{lem: constructing a revealing pair}.
\end{example}

Rollings are useful tools to produce revealing pairs. For example, using the correct trees, one can produce new revealing pairs from old ones.

\begin{definition}
    Let $g \in \AAutT$ and $P=\tp{g}{T_1}{T_2}$ a revealing pair for $g$. Let $(x_0,\dots,x_n)$ be a maximal chain. A \emph{cancelling tree} for $g$ at $(x_0,\dots,x_n)$ is a tree $T$ such that the $g$-rolling of $P$ along $(x_0,\dots,x_n)$ with $T$ is again a revealing pair.
\end{definition}

The existence of cancelling trees was proven by Salazar-D\'iaz (see Definition~20 and Claim 7 in \cite{sd10}). For a  wandering chain, any tree is a cancelling tree.
For a repeller chain, an example of a cancelling tree is the component of the repeller, for an attractor chain, the component of the attractor. For a periodic chain, an example is a caret.

We now show how to use rollings to produce revealing pairs from arbitrary tree pairs. The existence of revealing pairs for Higman--Thompson elements was proved by Brin in \cite{brin04}, Argument~10.7.
However, Brin's proof is not constructive. As our procedure to classify conjugacy in $\AAutT$ requires revealing pairs for all elements of $\AAutT$, we include here a new proof, which is constructive.

\begin{lemma}[Constructing a revealing pair] \label{lem: constructing a revealing pair}
	Let $g\in \AAutT$ and let $\tp{g}{T_1}{T_2}$ be a tree pair associated to $g$. Then there exist finite complete subtrees $T^+_1$ and $T^+_2$ of $\T$ with $T^+_i\supset T_i$ such that $\tp{g}{T_1^+}{T_2^+}$ is a revealing pair associated to $g$.
\end{lemma}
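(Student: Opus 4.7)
My plan is to construct the revealing pair through a finite sequence of rollings. By Remark \ref{rem:four_types_of_leafs}, a tree pair is revealing if and only if each of its maximal chains is attractor, repeller, periodic, or wandering. I would therefore enlarge $T_1$ and $T_2$ via rollings (which preserve the associated element $g$) so as to eliminate every maximal chain that is not of one of these four good types; I will call such a chain \emph{bad}.

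A bad non-periodic chain $(x_0,\ldots,x_n)$ must fall into one of two cases: either (a) $x_0$ is interior to $T_2$, i.e.\ $x_0$ is the root of some component $C$ of $T_2\setminus T_1$, but $x_n$ is not a descendant of $x_0$; or (b) $x_n$ is interior to $T_1$, the root of some component $C'$ of $T_1\setminus T_2$, but $x_0$ is not a descendant of $x_n$. In case (a) I would perform a forward $g$-rolling of $P$ along $(x_0,\ldots,x_n)$ with $T=C$; case (b) is symmetric, handled by a backward rolling with $T=C'$. After the rolling in case (a), the entire subtree $C$ is absorbed into $T_1^+$, so $x_0$ is no longer a leaf of $T_1^+$ and $C$ ceases to be a component of $T_2^+\setminus T_1^+$. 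The intermediate translates $\phi(C),\ldots,\phi^{n-1}(C)$ are added simultaneously to $T_1^+$ and $T_2^+$ and hence contribute no new components, while the only new component of $T_2^+\setminus T_1^+$ introduced is $\phi^n(C)$, attached at $x_n$.

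I would then set up an induction on the complexity measure
\[
N(P) := \#\{\text{components of } T_2\setminus T_1 \text{ without an attractor}\} + \#\{\text{components of } T_1\setminus T_2 \text{ without a repeller}\},
\]
which vanishes precisely when $P$ is revealing. The key step, and the main obstacle, will be to show that after finitely many iterated rollings $N$ strictly decreases. One has to verify that the newly-created component $\phi^n(C)$ in case (a) eventually acquires an attractor---either directly after one rolling or after further rollings of the same form applied to it---and that the rolling does not spoil the attractor/repeller status of any pre-existing component. The latter claim follows from the fact that the intermediate $\phi^i(C)$ are added to both $T_1^+$ and $T_2^+$; the former requires an orbit-chasing argument exploiting that iterating $\phi$ on the finite data of $P$ must eventually send a descendant of $x_n$ back into $\phi^n(C)$, by the same reasoning that underlies the existence of cancelling trees in \cite{sd10}. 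Once $N$ is shown to decrease, the induction terminates after finitely many rollings and produces a revealing pair $\tp{g}{T_1^+}{T_2^+}$ extending $P$.
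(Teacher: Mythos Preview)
Your approach is the same as the paper's---roll with the offending component---but the termination argument has a genuine gap.

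Your assertion that ``the only new component of $T_2^+\setminus T_1^+$ introduced is $\phi^n(C)$, attached at $x_n$'' is not correct in general. The endpoint $x_n$ of the chain satisfies $x_n\notin\LL T_1$, but this allows two possibilities: $x_n\notin T_1$, or $x_n$ is an \emph{interior} vertex of $T_1$. In the second case $\phi^n(C)$ overlaps $T_1$, and what survives in $T_2^+\setminus T_1^+$ is not one component but possibly several, one for each leaf of $T_1$ lying properly inside $\phi^n(C)$. Each of these may again lack an attractor, so your measure $N(P)$ can strictly \emph{increase} after a single rolling. Your proposed fix---an ``orbit-chasing argument'' showing a descendant of $x_n$ eventually returns to $\phi^n(C)$---is not the right mechanism here and does not obviously terminate.

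The paper resolves this with a finer complexity measure: besides counting fake components it tracks the total number of \emph{carets} contained in them. In the interior-vertex case the new components are proper subtrees of $\phi^n(C)$, so the caret count drops strictly; in the $x_n\notin T_1$ case the caret count is unchanged but the number of fake components drops by one. A lexicographic induction then terminates. The paper also separates the two phases (first eliminate all fake attracting components, then all fake repelling ones) and proves that the second phase cannot recreate fake attracting components; you would need some version of this, or else argue directly that a rolling of type (a) never creates new bad components of type (b) and vice versa.
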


\begin{proof}
    	For a tree pair $P=\tp{g}{S_1}{S_2}$, we call a component of $S_1 \setminus S_2$ a fake repelling component of $P$ if it does not contain a repeller. Similarly, a component of $S_2 \setminus S_1$ will be called a fake attracting component if it does not contain an attractor. By definition, $P$ is revealing if and only if it has no fake components. The idea of the proof is to perform rollings with fake components until no such components are left.
	    
	    \emph{Claim 1:} Let $P$ be a tree pair associated to $g$ and let $A$ be a fake attracting component. Let $x_0$ be the root of $A$ and $(x_0,\dots,x_n)$ its maximal chain. Let $Q$ be the forward $g$-rolling of $P$ with $A$ along $(x_0,\dots,x_n)$. Then, either the number of fake attracting  components in $Q$ is smaller than in $P$, or
	    $Q$ has strictly less fake attracting components than $P$ but the total number of carets involved in fake attracting components of $Q$ is the same or less than in $P$. The analog statement holds with fake repelling components.
	    
	    \emph{Claim 2:} Let $P$ be a tree pair associated to $g$ without fake attracting components. Let $B$ be a fake repelling component of $P$, let $x_n$ be the root of $B$ and let $(x_0,\dots,x_n)$ be its maximal chain. Let $Q$ be the backward $g$-rolling of $B$ along $(x_0,\dots,x_n)$.
	    Then $Q$ does not have any fake attracting components.
	    
	    The lemma clearly follows from these two claims. Indeed, given a tree pair $P$ for $g$, we perform $g$-forward rollings with fake repelling components until none are left, by Claim $1$ this is a finite process. Then, we perform $g$-backward rollings with fake attracting components until none are left. By Claim $2$ we will not create any new fake repelling components, and by Claim $1$ it is again a finite process.
	    
	    \emph{Proof of Claim 1:} It suffices to prove the statement for fake attracting components. The case of fake repelling components works completely analogously.
	    Let $P=:\tp{g}{S_1}{S_2}$ be a tree pair associated to $g$ and let  $\foris{\phi}{S_1}{S_2}$ be the corresponding representative.
	    Let $A,(x_0,\dots,x_n)$ and $Q:= \tp{g}{S_1'}{S_2'}$ be as in the claim.
    Observe that all components of $S_2 \setminus S_1$ except $A$ remain untouched by the rolling. As regards $A$, it will not appear as a fake repelling component of $Q$, because it appears in $S_1'$ as well. However, we may have created new fake attracting components while performing the rolling.
    	The glued copies of $A$ rooted in the neutral leaves $x_1,\dots,x_{n-1}$ were added in both $S_1$ and $S_2$ and so they have no contribution to the set of components of $S_2' \setminus S_1'$. It remains to look at the tree $\phi^n(A)$ glued at $x_n \in S_2$.	
        Because the chain is maximal, the vertex $x_n$ is not a leaf of $S_1$. Hence, $x_n$ either does not belong to $S_1$, or it is an inner vertex of $S_1$.
	    In the first case $\phi^n(A)$ was glued to a component of $S_2 \setminus S_1$ not equal to $A$, and it has no influence on whether it was a fake component or not, since it was not glued to a vertex in the $\phi$-orbit of the root of that component. Hence in this case no new fake repelling components were added, and so the number of fake components strictly decreased. The number of carets involved in fake attracting components did not increase because only a copy of $A$ was added to a component of $S_2 \setminus S_1$.
	    In the second case, since $x_n$ is an inner vertex of $S_1$, possible new components in $S_2' \setminus S_1'$ have in total less carets than $A$.
	
	\emph{Proof of Claim 2:}
	It is only possible that the $g$-rolling produces fake attracting components if $x_0$ is an inner leaf of $T_2$.
	In this case $x_0$ is a root of a component $A$ of $T_2 \setminus T_1$.
	But because $P$ does not have any fake attracting components, $x_0$ cannot be in the $\phi^{-1}$-orbit of an attractor. So we get that $A$ was a fake attracting component, contradicting the assumption that there are none of those.
\end{proof}

\subsection{Strand diagrams}\label{strand diagrams}

Belk and Matucci used strand diagrams to solve the conjugacy problem in Thompson's group $V$.
We follow their approach here and refer to their article \cite{bema14} for more information and background. Like them we use the slightly unusual notion of a "topological graph": In a directed graph we allow connected components that do not have any vertices at all and call them "free loops".

\begin{definition}
    Let $D$ be a directed graph. A \emph{split} in $D$ is a vertex with exactly one incoming edge and at least two outgoing edges. A \emph{merge} in $D$ is a vertex with exactly one outgoing edge and at least two incoming edges.
\end{definition}

\begin{definition}
	A \emph{closed abstract strand diagram of degree $d$} consists of the following:
	\begin{itemize}
		\item a finite directed graph $D$ such that every vertex is a split with $d$ outgoing edges, or a merge with $d$ incoming edges;
		\item a map $r$, called \emph{rotation system}, defined on the set of vertices of $D$, that associates to every split a total order on its outgoing edges, and to every merge a total order on its incoming edges;
		\item a cohomology class, called \emph{cutting class}, $c \in H^1(D,\mathbb{Z})$.
	\end{itemize}
For convenience, throughout the paper we abbreviate the term \emph{closed abstract strand diagram} as \emph{BM-diagram}.
\end{definition}

Recall that a cohomology class representative $\gamma \colon \Edge(D) \to \mathbb{Z}$ is a coboundary if and only if it evaluates $0$ along every cycle. This cycle need not be directed, but if it travels along an edge $e$ in its opposite direction, we have to count $-\gamma(e)$. In particular, the total value of a cycle is independent of the representative.

\begin{remark}
    Recall the classical fact that there is a natural bijection between $H^1(D,\mathbb{Z})$ and homotopy classes of continuous maps of a geometric realization of $D$ to $\mathbb{R}^2 \setminus \{0\}$. The reason is that the punctured plane is an Eilenberg--MacLane space of type $K(\mathbb{Z},1)$. We refer to \cite{hat01}, Introduction to Chapter 3, "The idea of cohomology" for an explanation how this works. This allows us to do drawings of BM-diagrams that have all the information about rotation systems and cutting classes.
\end{remark}

\begin{example}
    Figure \ref{fig:BMdiag} shows an example of a BM-diagram. First we give it with a cohomology class representative, then as homotopy class of an embedding into the punctured plane. Note that edges with a positive label wind as often around the central hole as the label says.
 \begin{figure}
  \centering
    \subfloat[with cohomology class representative]{\includegraphics[scale=1]{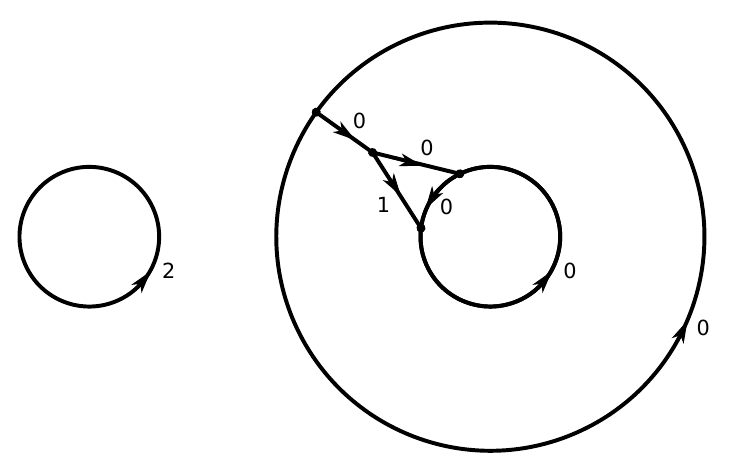}}%
    \qquad
    \subfloat[homotopy type of embedding]{\includegraphics[scale=1]{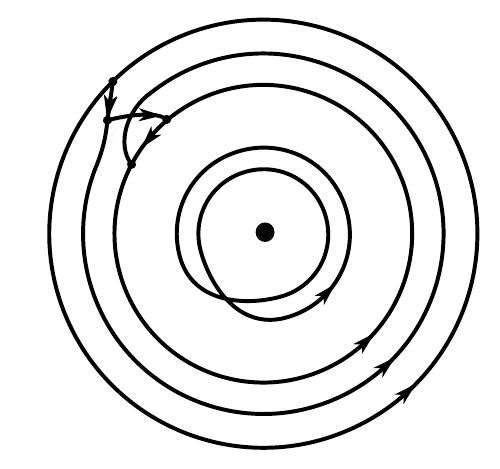}}%
  \caption{a BM-diagram.}
   \label{fig:BMdiag}
 \end{figure}
\end{example}

Let $(D,r,c)$ be a BM-diagram and $D'$ a directed graph isomorphic to $D$. A graph isomorphism $F \colon D \to D'$ clearly induces a rotation system $r_F$ and a cutting class $c_F$ on $D'$.

\begin{definition}
    Let $(D,r,c)$ and $(D',r',c')$ be two BM-diagrams. An \emph{isomorphism} between them is a graph isomorphism $F \colon D \to D'$ such that $r' = r_F$ and $c' = c_F$.
\end{definition}

Belk and Matucci defined several operations on BM-diagrams, called Type I, Type~II and Type III reductions.
The reductions induce an equivalence relation on diagrams, namely: two diagrams are equivalent if they can be reduced to the same diagram.
In the present work we will not need the third kind, but we introduce it for completeness. 
Also, we introduce a more general version of Type~I reductions that we call Type I*.

\begin{definition}
    Let $(D,r,c)$ be a BM-diagram and let $\gamma \colon \Edge(D) \to \mathbb{Z}$ be a representative for $c$.
    
	A \emph{Type I* reduction} is the following operation on a BM-diagram.
	Assume there are edges $e_1,\dots,e_d$ such that $o(e_1)=\dots=o(e_d) =: s$ is a split and $t(e_1)=\dots=t(e_d) =: m$ is a merge. Assume further that for one (and hence all) representatives $\gamma$ of the cohomology class we have $\gamma(e_i)=\gamma(e_j)$ for all $1 \leq i,j \leq d$. Then we delete the edges $e_1,\dots,e_d$ and make a new edge $e$ by melting together the incoming edge $e_s$ of $s$ and the outgoing edge $e_m$ of $m$.
	The rotation system of the new diagram is obvious, $e$ simply takes the place of $e_m$ and $e_s$ if they were part of a total order.
	The new cutting class is obtained by setting $\gamma(e) := \gamma(e_s)+\gamma(e_m)+\gamma(e_1)$ and leaving $\gamma$ unchanged in the rest of the diagram.
	
	A \emph{Type I reduction} is a Type I* reduction in the case where the order of the outgoing edges from the split is the same as the order they have when coming in to the merge. That is, $r(s) = r(m)$ as functions $\{e_1,\dots,e_d\} \to \{1,\dots,d\}$.

	A \emph{Type II reduction} is the following operation on a BM-diagram.
	Let $e$ be an edge in $D$ such that $o(e):=m$ is a merge and $t(e):=s$ is a split.
	First we erase $e$ including its endpoints from the diagram. Then for $i=1,\dots,d$ we create a new edge $e_i$ by melting together the $i$th incoming edge $e_i^m$ of $m$ with the $i$th outgoing edge $e_i^s$ of $s$. Note that it could happen that $e_i^m=e_i^s$, in which case we get that $e_i$ is a free loop.
	The new rotation system is obvious: The new edge $e_i$ simply takes the place of $e_i^m$ or $e_i^s$ in any total order they were part of.
	The cutting class is given by assigning to the new edges the value $\gamma(e_i) := \gamma(e)+\gamma(e^m_i)+\gamma(e^s_i)$ and leaving $\gamma$ unchanged on the rest of the diagram.
    
	A \emph{Type III reduction} is the following operation on a BM-diagram. If there are $d$ free loops $e_1,\dots,e_d$ such that $\gamma(e_1)=\dots=\gamma(e_d)$, then we erase $e_2,\dots,e_{d}$ and restrict $\gamma$ in the obvious way. Since there are no splits or merges involved in this operation, there is nothing to say about the rotation system.
\end{definition}

The different reduction Types are illustrated in Fig. \ref{fig:reductions}. To see Type II and Type I* illustrated on a closed loop, consult Fig. \ref{fig:problemwithIstar}.

 \begin{figure}
  \centering
    \subfloat[Type I]{\includegraphics[scale=1]{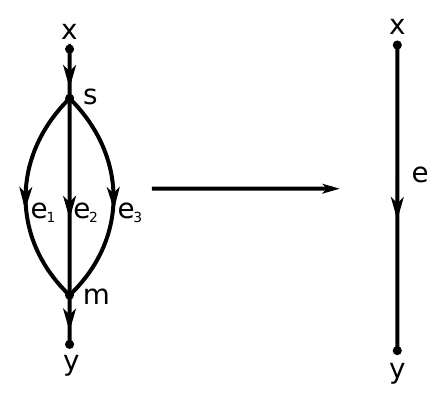}}%
    \qquad
    \subfloat[Type I*]{\includegraphics[scale=1]{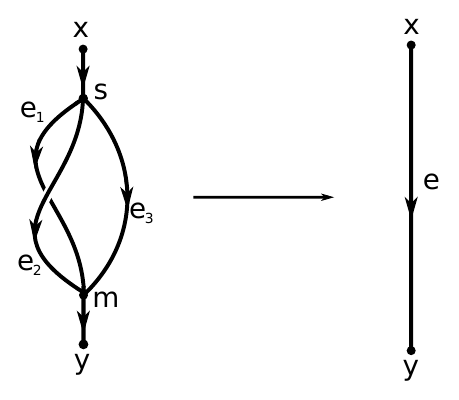}}%
    
   \subfloat[Type II]{\includegraphics[scale=1]{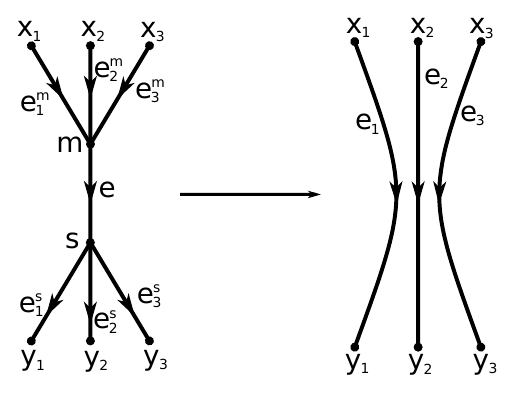}}%
    \qquad
    \subfloat[Type III]{\includegraphics[scale=0.9]{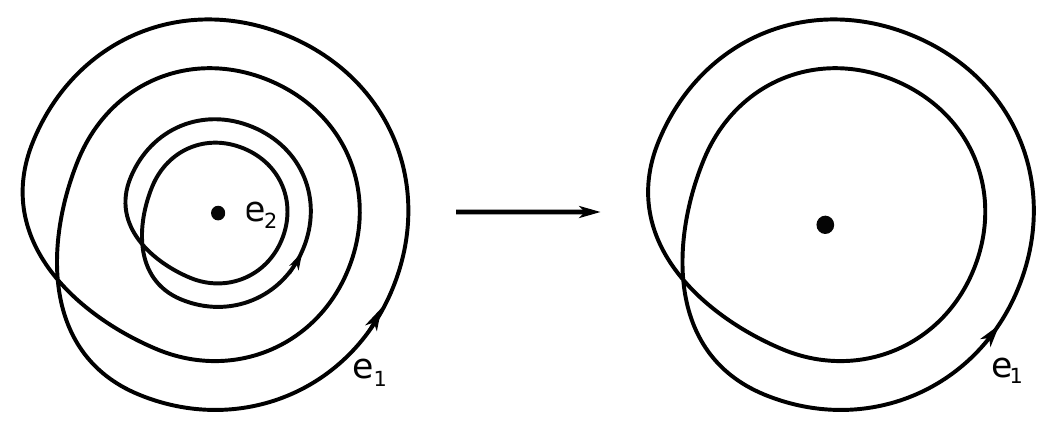}}%
  \caption{Reductions of Types I, I* and II for $d=3$, and of Type III for $d=2$.}
   \label{fig:reductions}
 \end{figure}

We now introduce three different notions of reduced BM-diagrams.

\begin{definition}
        A BM-diagram is called \emph{II-reduced} if no Type II reduction can be done on it, i.e. if there is no edge $e$ that is the outgoing edge of a merge and the incoming edge of a split.
        
	    A BM-diagram is called \emph{reduced} if no Type I, Type II or Type III reduction can be done on it.
	    
	    A BM-diagram is called \emph{*-reduced} if no Type I*, Type II or Type III reduction can be done on it.
\end{definition}

Clearly, *-reduced implies reduced.
Regarding the structure of reduced BM-diagrams, Belk and Matucci showed the following.

\begin{proposition}[\cite{bema14}, Proposition 4.1]\label{prop: loops in reduced diagram}
Let $(D,c,r)$ be a reduced BM-diagram. Let $L$ be a directed loop in $D$.
Then $L$ satisfies one of the following.
\begin{enumerate}
    \item Every vertex in $L$ is a split.
    \item Every vertex in $L$ is a merge.
    \item $L$ is a free loop, i.e., it contains no vertices.
\end{enumerate}
Moreover, all directed loops in $D$ are disjoint.
\end{proposition}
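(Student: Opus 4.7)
The plan is to isolate the only hypothesis the statement actually needs, namely II-reducedness. Since a Type II reduction is available at precisely those edges that go from a merge to a split, II-reducedness translates into the following local dichotomy: the unique outgoing edge of any merge must terminate at a merge, and the unique incoming edge of any split must originate at a split. Neither the cutting class, the rotation system, nor Type I or Type III reductions will enter the argument.

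From this local dichotomy the trichotomy is immediate by propagation around $L$. If $L$ has no vertices it is by convention a free loop, giving case (3). Otherwise pick a vertex $v$ of $L$. If $v$ is a merge, the successor of $v$ in $L$ is the target of $v$'s unique outgoing edge and hence another merge; iterating around $L$ shows that every vertex of $L$ is a merge, giving case (2). Symmetrically, if $v$ is a split then the predecessor of $v$ in $L$ is the source of its unique incoming edge and hence another split, and iterating backward around $L$ gives case (1).

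For disjointness, suppose two distinct directed loops $L_1$ and $L_2$ share a vertex $v$. Free loops have no vertices at all, so $v$ must lie on two loops of types (1) or (2); say $v$ is a split, the merge case being symmetric. The predecessor of $v$ in each $L_i$ is forced to be the source of $v$'s unique incoming edge, hence common to both loops and again a split. Iterating, the backward walks from $v$ inside $L_1$ and inside $L_2$ coincide step by step. Because $L_1$ and $L_2$ are simple directed cycles through $v$, this common deterministic backward trajectory traces each of them in full, forcing $L_1=L_2$ and a contradiction. Free loops are entire connected components of $D$ and are therefore trivially disjoint from all other loops.

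There is no serious obstacle: everything is driven by the fact that splits have a unique \emph{incoming} edge while merges have a unique \emph{outgoing} edge, so II-reducedness supplies full backward (respectively forward) determinism through splits (respectively merges). The only place where a little care is needed is verifying that two simple directed cycles of possibly different lengths through $v$ must actually coincide once the backward walk is deterministic; but this is elementary, since the minimal period of the deterministic backward walk starting at $v$ must divide the length of each cycle, and for a simple cycle this minimal period equals the length itself.
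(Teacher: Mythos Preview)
Your argument is correct. The paper does not supply its own proof of this proposition; it simply quotes the result from Belk--Matucci \cite{bema14}, so there is nothing to compare against at the level of method. Your observation that only II-reducedness is needed (neither the rotation system, the cutting class, nor the absence of Type I or Type III reductions plays any role) is accurate and worth stating, since the paper later uses exactly this trichotomy for diagrams that are only known to be II-reduced (cf.\ Lemma~\ref{lem:basic_BM_diagram_of_revealing_pair_II-reduced}). The disjointness argument via backward (resp.\ forward) determinism through splits (resp.\ merges) is clean; your closing remark handling the potential length mismatch between two simple cycles through a common vertex is the right way to tie it off.
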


The reason why we do not bother about Type III reductions is that they only deal with free loops. Free loops represent periodic behaviour of tree almost automorphisms, and the periodic behaviour in the group $\AAutT$ is much more complicated than the one in $\V$, and so these reductions do not help to analyze the $\AAutT$ case.

Belk and Matucci showed that the reduction process, using reductions of Types I, II and III is  well-defined, in the sense that the reduced form of a diagram does not depend on the order of reductions (Proposition 2.3 in \cite{bema14}).
It is interesting to note the following.

\begin{lemma} \label{lem:reductions_do_not_destroy_reducedness}
	Let $D$ be a II-reduced BM-diagram. Suppose we perform a Type I* reduction on $D$. Then the resulting diagram is still II-reduced.
	
	It follows that the following process, done on a given BM-diagram, results in a (*-)reduced diagram. First perform Type II reductions until the diagram is II-reduced, then perform on it Type I(*) reductions until it is not possible anymore, and lastly perform Type III reductions until none are possible anymore.
\end{lemma}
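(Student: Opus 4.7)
The plan is to deduce the first part from a simple observation about the single new edge created by a Type I* reduction, and then to derive the second part from the first by a termination argument. First I set up notation: a Type I* reduction on $D$ involves a split $s$ with unique incoming edge $e_s$ (from some vertex $u$) and a merge $m$ with unique outgoing edge $e_m$ (to some vertex $v$), together with $d$ parallel edges $e_1,\dots,e_d$ from $s$ to $m$. After the reduction, the vertices $s,m$ and the edges $e_1,\dots,e_d$ disappear and $e_s,e_m$ are melted into a single new edge $e$ from $u$ to $v$, while every other edge and vertex is left untouched.

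The crux is the following. In $D$ the edge $e_s$ enters the split $s$, so if $u$ were a merge then $e_s$ would already have been Type II reducible; since $D$ is II-reduced, $u$ is not a merge, and in particular $u\ne m$ (so $e$ is well-defined in the reduced diagram). Symmetrically, $v$ is not a split, and $v\ne s$. A new Type II configuration in the reduced diagram would have to come from an edge whose source is a merge and whose target is a split; the only candidate is $e$, but its endpoints have just been excluded, and no other edge or vertex type has changed. Hence the new diagram is II-reduced, which proves the first assertion.

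For the second assertion I use monotone complexity measures. Each Type II reduction strictly decreases the number of vertices, so phase one terminates at a II-reduced diagram. Each Type I(*) reduction strictly decreases the number of edges (removing $d+1$ and creating $1$) and, by the first part, preserves II-reducedness; phase two therefore terminates at a diagram that is both II- and I(*)-reduced. Type III reductions only affect free loops, which are incident to no vertex and so cannot participate in any Type II or Type I(*) configuration; each strictly decreases the number of free loops, so phase three terminates at a diagram which is (*-)reduced, as required. I do not anticipate a serious obstacle; the step that needs to be set up carefully is the observation that II-reducedness of $D$ already constrains the endpoints of the two melted edges, which is what makes the rest of the argument immediate.
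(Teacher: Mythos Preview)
Your argument is correct and follows essentially the same route as the paper: the key observation in both is that, since $D$ is II-reduced, the source of $e_s$ is a split and the target of $e_m$ is a merge, so the single melted edge cannot create a new Type~II configuration. Your second part is a somewhat more explicit termination argument (via vertex, edge, and free-loop counts) than the paper's, which simply asserts that the second part follows directly from the first, but the content is the same.
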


\begin{proof}
	For the first part, let $s$ be the split and $m$ the merge that vanished in the Type I* reduction. Let $e_s$ be the edge ending at $s$ and $e_m$ be the edge starting at $m$, and denote by $s=o(e_s)$ and $y=t(e_m)$. 
	Note that $x$ is a split and $y$ a merge because $D$ is II-reduced.
	This means that the new edge connecting $x$ to $y$, which we have after the Type I* reduction, is not subject to Type II reduction. But since the rest of the diagram is unchanged, this implies the claim.
	
	The second part of the lemma follows directly from the first. 
\end{proof}

The next few paragraphs deal with the question when an isomorphism between BM-diagrams survives a Type II reduction. This will play a crucial role in the proof of Lemma \ref{lem: fixator elements change only the rotation system}.

\begin{definition}
	Let $(D,c,r)$ a BM-diagram. A sub-diagram of $D$ is called an \emph{hourglass} if it consists of the following:
	\begin{itemize}
		\item a complete tree $T_1$ all of whose inner vertices are merges. In particular, all of its maximal directed paths end in a vertex $r_1$.
		\item a complete tree $T_2$ that is the mirrored copy of $T_1$ in the sense that the directions of all edges are reversed, but the rotation system is unchanged.
		In particular, all inner vertices of $T_2$ are splits, and all maximal directed paths start in a vertex $r_2$.
		\item a directed edge going from $r_1$ to $r_2$
	\end{itemize}
	Two vertices $x_1\in T_1,x_2\in T_2$ in an hourglass are called \emph{correlated}, if $x_2$ is the image of $x_1$ under the direction-reversing identification of $T_1$ with $T_2$. Note that in particular it follows that $x_1$ is merge and $x_2$ is a split.
\end{definition}

The simplest example of an hourglass is a merge that is followed by a split, which is exactly the situation when we can perform a Type II reduction.
The point of an hourglass is that we can make it vanish by repeatedly performing Type II reductions.

\begin{definition}
    Let $H$ be an hourglass in a BM-diagram with merge tree $T_1$ and split tree $T_2$. A \emph{Type II reduction of $H$} is the following operation. First, delete the interior of $H$. Then melt together each edge ending at a leaf of $T_1$ with the edge starting at the correlated leaf of $T_2$. Equivalently, perform repeatedly Type II reductions on all edges in $H$, until all its interior is gone.
\end{definition}

\begin{figure}
    \centering
    \includegraphics[width=\textwidth]{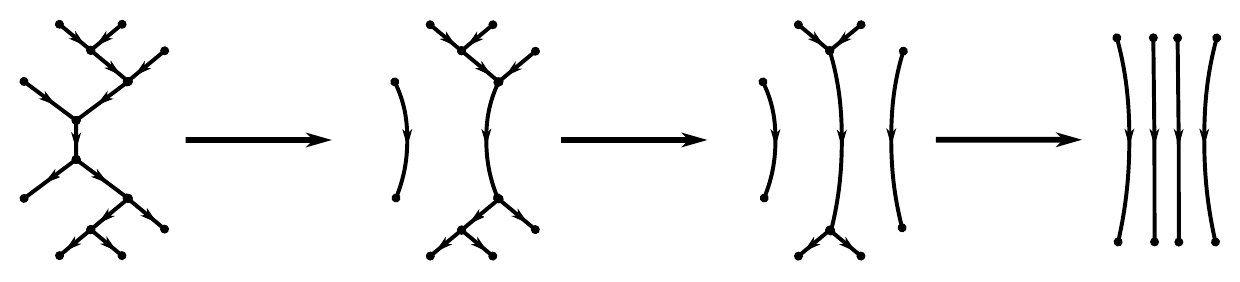}
    \caption{Type II reduction of an hourglass.}
    \label{fig:reduction of an hourglass}
\end{figure}

\begin{definition}
    Two BM-diagrams $(D,c,r)$ and $(D',c',r')$ are said to be \emph{isomorphic up to rotation} if there exists a graph isomorphism $F\colon D \to D'$ such that $c'=c_F$.
\end{definition}

That is, the two diagrams are isomorphic as directed graphs with a cohomology class, but the isomorphism between them does not necessarily respect the rotation system. 

Being isomorphic up to rotation is not preserved under Type II reductions in general.
The problem is that if a Type II reduction melts together two edges $e,f$ in $D$, there is no reason why $F(e)$ and $F(f)$ would be melted together as well, see Fig. \ref{fig:isom not respecting an hourgalss}.

\begin{figure}
    \centering
     \subfloat{\includegraphics[scale=0.8]{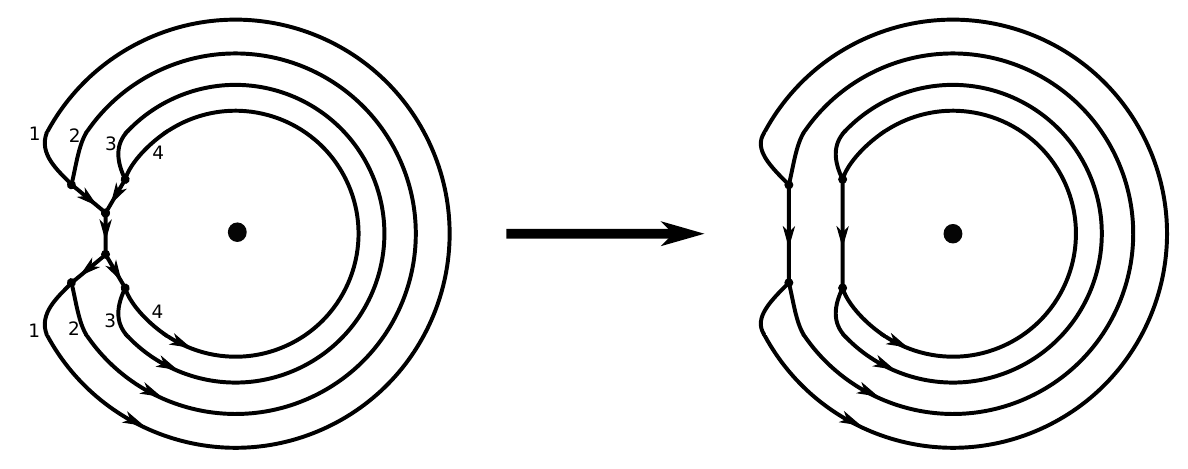}}%
    \qquad
    \subfloat{\includegraphics[scale=1]{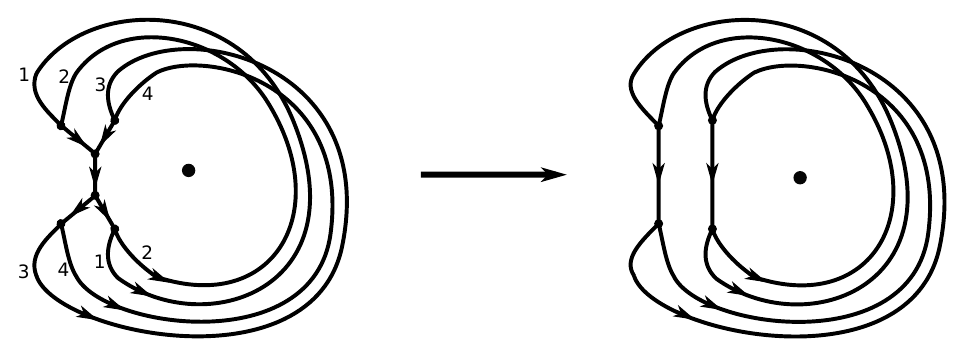}}%
    \caption{The isomorphism does not respect hourglasses, so the reduced diagrams are not isomorphic up to rotation.}
    \label{fig:isom not respecting an hourgalss}
\end{figure}
It is too strong to ask that $F$ does not do anything to the rotation system at the different endpoints of an edge connecting a merge to a split; it suffices to require that $F$ messes up both total orders by the same permutation.

\begin{definition}
    Let $(D,c,r)$ and $(D',c',r')$ be two BM-diagrams of degree $d$ and let $F \colon D \to D'$ be a graph isomorphism. Let $H \subset D$ be an hourglass. Then we say that $F$ \emph{respects} $H$ if
    for all correlated inner vertices $x,y$ of $H$ there exists a $\sigma \in \operatorname{Sym}(d)$ such that $r_F(F(x)) = r'(F(x)) \circ \sigma$ and $r_F(F(y)) = r'(F(y)) \circ \sigma$.
\end{definition}

Note that if $F$ respects $H$ then $F(H)$ is an hourglass in $D'$ and $x,y$ are correlated vertices if and only if $F(x)$ and $F(y)$ are.

\begin{lemma}\label{lem: diff_in_rot_system_preserved_under_II-reduction}
	Let $(D,c,r)$ and $(D',c',r')$ be two BM-diagrams that are isomorphic up to rotation via a graph isomorphism $F \colon D \to D'$.
	Let $H$ be an hourglass in $D$ and assume that $F$ respects $H$. Then, after performing the Type II reduction on $H$ and $F(H)$, the diagrams are still isomorphic up to rotation via an isomorphism induced by $F$.
\end{lemma}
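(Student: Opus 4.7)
The plan is to first show that $F(H)$ is itself an hourglass in $D'$, then construct the induced graph isomorphism $\tilde F$ between the reduced diagrams $\tilde D$ and $\tilde D'$, and finally verify that $\tilde F$ preserves the cutting class.

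For the first step I would use that $F$ sends merges to merges and splits to splits, so $F(T_1)$ and $F(T_2)$ form the correct combinatorial skeleton for an hourglass. To see that the mirror identification of $F(T_1)$ with $F(T_2)$ respects $r'$, observe that the same identification respects the pushforward rotation $r_F$ (because it does so in $H$), and the hypothesis ``$F$ respects $H$'' gives, at each correlated inner pair $(x,y)$ of $H$, a common permutation $\sigma_{x,y}$ such that $r_F(F(x)) = r'(F(x)) \circ \sigma_{x,y}$ and $r_F(F(y)) = r'(F(y)) \circ \sigma_{x,y}$. Combined, these two identities imply that the mirror identification preserves $r'$ as well, so $F(H)$ is an hourglass in $D'$. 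A parallel argument, by induction on the depth inside $T_1$, will show that $F$ preserves correlation itself: for each correlated pair $(a,b)$ in $H$, the images $(F(a),F(b))$ are correlated in $F(H)$. The key observation is that the per-pair permutation $\sigma_{x,y}$ acts identically on the children of $F(x)$ and of $F(y)$, so although the $r'$-enumeration of children may disagree with the $F$-pushforward of the $r$-enumeration, the resulting pairing of corresponding children is unchanged.

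Next I would define $\tilde F$ as follows: outside the interiors of $H$ and $F(H)$ let $\tilde F$ coincide with $F$, and to a melted edge $\tilde e$ of $\tilde D$ arising from the external edges into some $\ell_1 \in \LL T_1$ and out of its correlated leaf $\ell_2 \in \LL T_2$, assign the melted edge in $\tilde D'$ arising from the pair $(F(\ell_1), F(\ell_2))$. By the correlation argument above, this pair is correlated in $F(H)$, so the assignment is well-defined and produces a graph isomorphism $\tilde F \colon \tilde D \to \tilde D'$.

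Finally, to verify that $\tilde F$ preserves the cutting class, I would use that iterated Type II reductions accumulate $\gamma$-values: the new value on a melted edge $\tilde e$ is the sum of $\gamma$ along the two external edges together with the unique directed path through $H$ connecting the merge-tree leaf to its correlated split-tree leaf via $r_1$ and $r_2$. Since $c'=c_F$, pick a representative $\gamma$ of $c$; its pushforward via $F$ differs from a representative $\gamma'$ of $c'$ by a coboundary on $D'$, and summing the same formula along the image path in $F(H)$ yields, up to coboundary, the new value on $\tilde F(\tilde e)$. The main obstacle I expect is keeping the bookkeeping of correlations and rotation-system permutations consistent throughout the whole hourglass, since the permutations $\sigma_{x,y}$ are allowed to vary from pair to pair, and ensuring that the coboundary ambiguity inherited from the isomorphism $F$ does not obstruct the argument at the level of melted edges.
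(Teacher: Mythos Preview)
Your proposal is correct, but it takes a more laborious route than the paper does. You work globally: verify that $F(H)$ is an hourglass, prove by induction on depth that $F$ preserves correlation of leaves, define $\tilde F$ all at once on the melted edges, and then check the cutting class by summing $\gamma$ along paths through $H$. The paper instead reduces to the elementary case of a \emph{single} Type II reduction on one edge $e$ from a merge $m$ to a split $s$: there the hypothesis that $F$ respects the hourglass at the correlated pair $(m,s)$ says exactly that whenever the $i$th incoming edge of $m$ and the $i$th outgoing edge of $s$ get melted, so do their $F$-images, so the induced map on the reduced diagram is immediate. The full hourglass reduction is then just a finite iteration of single-edge Type II reductions, and induction finishes the argument. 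Your approach has the advantage of making the final bijection and its compatibility with the cutting class completely explicit, at the cost of the bookkeeping you identify (varying permutations $\sigma_{x,y}$, coboundary ambiguities); the paper's approach buys simplicity by hiding all of this inside the induction step, where the definition of ``respects'' does the work automatically.
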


\begin{proof}
	Suppose first that a Type II reduction is done on an edge $e=(m,s)$. Respecting the rotation system at $(m,s)$ means that after the Type II reduction, if the edges $e^i_m$ (ending at $m$) and $e^i_s$ (starting at $s$) melted to one edge, $e^i$, then also $F(e^i_m)$ and $F(e^i_s)$ melted to one edge, $e^{i'}$. Abusing notation we denote by $F$ also the new isomorphism, then $F(e^i)=e^{i'}$.
	
	Since a Type II reduction of hourglass can be done by successively Type II reducing along single edges, the statement now follows by induction.
\end{proof}

\begin{example}
Figure \ref{fig:isom not respecting an hourgalss} illustrates an isomorphism up to rotation that does not respect hourglasses. As a consequence, the reduced diagrams are not isomorphic up to rotation. Indeed, they have differently many connected components.
\end{example}

\begin{corollary}\label{cor: diff_in_rot_system_preserved_under_reduction}
	Let $(D,c,r)$ and $(D',c',r')$ be two BM-diagrams that are isomorphic up to rotation via a graph isomorphism $F \colon D \to D'$, and suppose $F$ respects all hourglasses in $D$. Assume that after performing the Type II reductions on all these hourglasses the diagrams are II-reduced. Then the *-reductions of $D$ and $D'$ are isomorphic up to rotations.
\end{corollary}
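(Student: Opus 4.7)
The plan is to carry out the three-stage reduction procedure described in Lemma~\ref{lem:reductions_do_not_destroy_reducedness}, first Type II, then Type I*, then Type III, and to show that being isomorphic up to rotation persists through each stage.

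First, I would invoke Lemma~\ref{lem: diff_in_rot_system_preserved_under_II-reduction} iteratively: pick an hourglass $H$ of $D$, simultaneously reduce $H$ in $D$ and $F(H)$ in $D'$, then update $F$ to the isomorphism induced on the smaller diagrams. Each single step is legitimate because $F$ respects $H$ by hypothesis, and it only affects the interior of $H$ and $F(H)$, so it leaves any hourglass disjoint from $H$ intact, together with the permutation $\sigma$ witnessing that $F$ respects it. Processing all hourglasses this way, possibly in a suitable order that handles nested or adjacent hourglasses, the assumption in the statement of the corollary gives that one ends up with two II-reduced diagrams that are still isomorphic up to rotation via an induced isomorphism that I continue to denote by $F$.

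Next, I would perform matched Type I* reductions. Suppose $e_1,\ldots,e_d$ are parallel edges in $D$ from a split $s$ to a merge $m$ with the same value under some representative $\gamma$ of $c$. Since $F$ is a graph isomorphism with $c_F=c'$, the images $F(e_1),\ldots,F(e_d)$ are parallel edges from the split $F(s)$ to the merge $F(m)$ with the same value under the corresponding representative of $c'$, so a Type I* reduction is applicable at the matched spot in $D'$. The new edges produced on the two sides are naturally identified by the induced $F$, and the cohomology values assigned to them, namely $\gamma(e_s)+\gamma(e_m)+\gamma(e_1)$ and its pushforward under $F$, agree under $F$. By Lemma~\ref{lem:reductions_do_not_destroy_reducedness} each such reduction preserves II-reducedness, so the process can be iterated until no Type I* reduction is possible. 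Matched Type III reductions at free loops are handled the same way, since free loops in $D$ with equal $\gamma$-values are carried by $F$ to free loops in $D'$ with equal $\gamma'$-values.

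After these three rounds both diagrams are *-reduced and still isomorphic up to rotation via the cumulative induced isomorphism, which proves the corollary. The main technical point throughout is checking at each reduction that the newly assigned cohomology values on the two sides correspond under $F$; this is immediate from the fact that $F$ pushes representatives of $c$ to representatives of $c'$ at every intermediate stage, and that the Type II, Type I* and Type III rules assign new values by the same formula on both sides. The main obstacle I anticipate is the bookkeeping in the first stage, when two hourglasses of $D$ share boundary edges and the induced $F$ on the partially reduced diagram must still be checked to respect the remaining hourglasses; this is where choosing a good order of reductions, or equivalently reducing maximal collections of disjoint hourglasses in parallel, is useful.
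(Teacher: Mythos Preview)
Your proposal is correct and follows essentially the same line as the paper's proof: first use Lemma~\ref{lem: diff_in_rot_system_preserved_under_II-reduction} to pass to II-reduced diagrams that are still isomorphic up to rotation, then observe that Type~I* (and Type~III) reductions ignore the rotation system entirely and hence can be carried out in parallel via $F$, preserving the cutting class. The paper's argument is terser and does not spell out the matched-reduction bookkeeping or the potential interaction between overlapping hourglasses that you flag; your version is a more careful unpacking of the same idea.
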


\begin{proof}
	Let $\hat{D},\hat{D'}$ denote the Type II reductions of $D,D'$ respectively. By Lemma \ref{lem: diff_in_rot_system_preserved_under_II-reduction}, $\hat{D}$ and $\hat{D'}$ are isomorphic up to rotation. Type I* reductions do not depend on the rotation system, so they do not affect it. It follows that the isomorphism between $\hat{D}$ and $\hat{D'}$ descents to an isomorphism between their *-reductions, preserving the cutting classes. 
\end{proof}

\subsubsection{From tree pairs to strand diagrams and back}\label{from tree pairs to strand diagrams}

Every tree pair gives rise to a BM-diagram.
The plane order on the trees in the pair, inherited from the plane order on $\T$, will induce the rotation system.

\begin{definition}\label{def:basic BM-diagram}
	Let $P=[\kappa,T_1,T_2]$ be a tree pair.
	 The \emph{basic BM-diagram of $P$} is the BM-diagram constructed as follows:
	\begin{enumerate}
		\item Draw a copy of $T_1$ and direct all edges to point away from the root $r_1$. Keep the plane order of the outgoing edges in every vertex.
		\item Draw a copy of $T_2$ and direct all edges to point toward the root $r_2$. Keep the order of the incoming edges in every vertex.
		\item Identify each leaf $x$ of $T_1$ with the leaf $\kappa(x)$ of $T_2$. In particular, the edge ending at $x$ and the edge starting at $\kappa(x)$ merge to a single edge.
		\item Put an edge $e$ with $o(e)=r_2$ and $t(e)=r_1$.
		\item Define a cutting class $[\gamma]$ of via $\gamma(e):=1$ and $\gamma(e')=0$ for all edges $e'\neq e$.
		\item Note that $e$ together with the two copies of $T_1 \cap T_2$ form an hourglass, and a vertex $v \in T_1 \cap T_2$ viewed as vertex of $T_1$ is simply correlated to itself viewed as vertex of $T_2$. Do a Type II reduction on this hourglass.
	\end{enumerate}
		\begin{figure}[H]
    \centering
    \subfloat[a revealing pair for an element]{\includegraphics[scale=0.8]{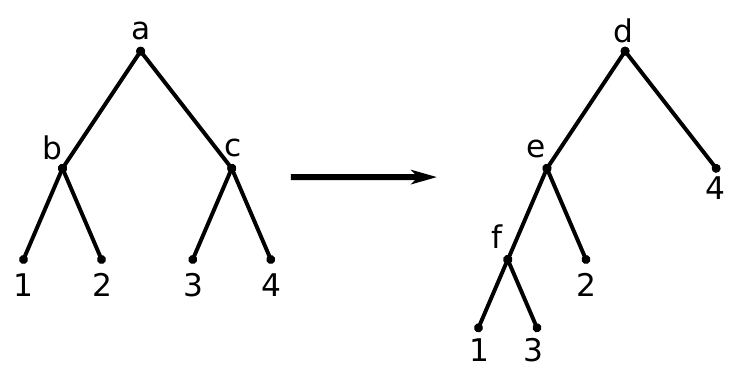}}%
  
    \subfloat[forming its basic BM-diagram]{\includegraphics[scale=0.6]{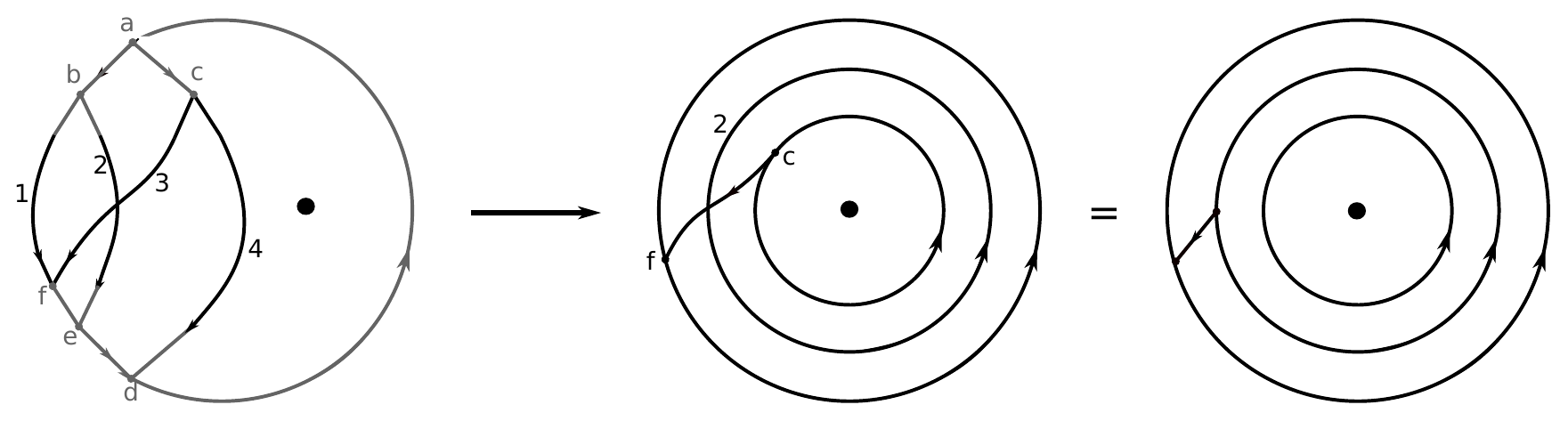}}%
    \caption{The basic BM-diagram of a tree pair.}%
\label{fig:basic BM-diagram}%
\end{figure}
\end{definition}

\begin{remark}
    The basic BM-diagram of a tree pair of $\Tdk$ is indeed a BM-diagram of degree $d$. This is because the hourglass being reduced in the last step always contains the root and the $k$ edges adjacent to it.
\end{remark}

An example for a tree pair and its basic BM-diagram is shown in Fig. \ref{fig:basic BM-diagram}. The hourglass is drawn with gray edges.

Basic BM-diagrams behave nicely with respect to revealing pairs.

\begin{lemma}\label{lem:basic_BM_diagram_of_revealing_pair_II-reduced}
    The basic BM-diagram of a revealing pair is II-reduced.
\end{lemma}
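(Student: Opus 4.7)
The plan is to show that the unique outgoing edge of every merge in the basic BM-diagram of a revealing pair lands on another merge, which is equivalent to II-reducedness. Recall that the merges in the final diagram are of two types: interior vertices of components of $T_2\setminus T_1$, and roots of such components, which are exactly the Type~III leaves of $N:=T_1\cap T_2$ (leaves of $T_1$ interior to $T_2$). For a merge $M$ of the first type, the outgoing $T_2$-edge stays inside the same component of $T_2\setminus T_1$ and hence targets another merge. The remaining task is when $M$ is a Type~III leaf of $N$; then the outgoing $T_2$-edge from $M$ goes to $M$'s parent in $T_2$, an interior vertex of $N$, and hence lies inside the hourglass reduced in step~(6) of Definition~\ref{def:basic BM-diagram}. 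I will trace this edge through the successive Type~II reductions.

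Since the rotation systems on the two copies of $N$ inside the hourglass are induced by the same plane order of $\T$, a Type~II reduction at an interior vertex $u$ of $N$ pairs the $i$-th incoming slot at $u$ in the $T_2$-copy with the $i$-th outgoing slot at $u$ in the $T_1$-copy. Combined with the leaf-edges created in step~(3) of the construction, a direct computation shows that after the reduction at the level of $M$'s parent the edge from $M$ terminates at $\kappa(M)$'s parent in the $T_2$-copy; if $\kappa(M)\in N$ is a neutral (Type~I) leaf, further reductions at $\kappa(M)$'s parent send the edge to $\kappa^2(M)$'s parent, and so on along the maximal $\kappa$-chain $M=x_0,x_1,\dots,x_n$ starting at $M$ (which exists because $M$ lies in the domain of $\kappa$ but, being a leaf of $T_1$ and not of $T_2$, not in its image). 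The final target is $x_n$'s parent in $T_2$, which is a merge whenever $x_n$ is a leaf of $T_2\setminus T_1$; the only way it could be a split is if $x_n$ is a Type~II leaf of $N$, in which case the target would be the split vertex at $x_n$ coming from the $T_1$-copy.

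The main obstacle, and the place where the revealing hypothesis enters, is excluding the Type~II possibility. Since $M$ is Type~III, the chain starting at $M$ cannot be periodic (every vertex of a periodic chain is a neutral leaf, whereas $M$ is not), cannot be wandering ($M\in N\subset T_2$), and cannot be a repeller chain: every ancestor of $M$ in $\T$ lies in $T_1$, so no $x_n\in\LL(T_2\setminus T_1)$ could be such an ancestor; and no leaf of $N$ can be an ancestor of another leaf of $N$, ruling out $x_n$ being Type~II. By Remark~\ref{rem:four_types_of_leafs}, the revealing assumption therefore forces the chain to be an attractor chain, so $x_n$ is a descendant of $M$ in $\T$. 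But $M$ is a leaf of $N$, so the only element of $N$ equal to or a descendant of $M$ in $\T$ is $M$ itself; hence $x_n\notin N$, and since $x_n$ is a leaf of $T_2$, we have $x_n\in\LL(T_2\setminus T_1)$. Thus the target of the outgoing edge of $M$ is a merge, as desired.
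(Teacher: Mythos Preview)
Your proof is correct and follows essentially the same approach as the paper's: both trace the outgoing edge of a merge that is a root of a component of $T_2\setminus T_1$ through the hourglass reduction along the $\kappa$-orbit, and use the revealing hypothesis to conclude the edge terminates at a merge in that component. The only notable difference is in how you establish that the maximal chain starting at $M$ is an attractor chain: the paper invokes the definition of revealing directly (the component $A$ rooted at $m$ contains an attractor, so $\kappa^n(m)$ is a leaf of $A$ for some $n$), whereas you argue by elimination via Remark~\ref{rem:four_types_of_leafs}, ruling out the periodic, wandering, and repeller cases one by one. Both routes are valid; the paper's is a bit more direct, while yours makes the role of each excluded chain type more explicit.
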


\begin{proof}
    Let $D$ be the basic BM-diagram of the revealing pair $P=[\kappa,T_1,T_2]$.
    Note that all the vertices of $D$ can be identified with roots and inner vertices of components of $T_1$ and $T_2$, where the vertices from $T_1$ stay splits and the vertices of $T_2$ stay merges.
    
    Let now $e$ be an edge in $D$ that starts in a merge $m$. We have to show that the end of $e$ is a merge as well.
    If $m$ was an inner vertex of a component of $T_2 \setminus T_1$, it is followed by another merge. We can therefore assume that $m$ is the root of an attracting component $A$ in $T_2$, and therefore $m$ was in the hourglass that got reduced.
    So it had a correlated vertex in $T_1$ before the hourglass reduction, which was clearly the vertex $m$ in $T_1$. But $m \in T_1$ was connected to $\kappa(m) \in T_2$, the correlated vertex of which was $\kappa (m) \in T_1$, and so on. Since $P$ is a revealing pair, for some $n$ the vertex $\kappa^n(m)$ is a leaf of $A$. It follows that $e$ is an incoming edge of a merge in $A$, as we wanted.
\end{proof}

Belk and Matucci introduced BM-diagrams in order to classify conjugacy classes in Thompson's group $V=V_{2,2}$. They proved the following theorem. There is nothing special about $V_{2,2}$, the proofs work for all $V_{d,k}$.

\begin{theorem}[\cite{bema14}, Proposition 2.3, Theorem 2.15] \label{thm:belkmatucci}
	Let $v,w \in V$.
	Let $P=\tp{v}{T_1}{T_2}$ and $Q=\tp{w}{T_3}{T_4}$ be tree pairs for $v$ and $w$ and form their basic BM-diagrams. Perform Type I, II and III reductions on them until they are reduced. Let $(D,r,c)$ and $(D',r',c')$ be these reduced diagrams.
	\begin{enumerate}
	    \item The reduced diagrams $(D,r,c)$ and $(D',r',c')$ depend only on $v$ and $w$, but not on $P$ and $Q$ or on the order of reductions.
	    \item The elements $v$ and $w$ are conjugate in $V$ if and only if $(D,r,c)$ and $(D',r',c')$ are isomorphic.
	\end{enumerate}
\end{theorem}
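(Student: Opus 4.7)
The plan is to analyze how tree pairs and their basic BM-diagrams transform under the two natural operations: passing to an equivalent tree pair for the same element (\emph{expansion}) and conjugation by an element of $V$. Geometrically, a tree pair is an ``open'' strand diagram read from bottom to top, and closing it up with the extra edge from $r_2$ to $r_1$ is precisely what makes the construction insensitive to cyclic permutations of composed elements, which is the algebraic content of conjugacy.

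For Part (1), two tree pairs $P$ and $P'$ represent the same element of $V$ if and only if they differ by a finite sequence of \emph{expansions}, each of which attaches matching carets to a leaf $x$ of $T_1$ and to its image $\kappa(x)$ in $T_2$. On the level of basic BM-diagrams, such an expansion inserts a split followed by a merge with matching rotation systems on the identified edges, which is precisely the inverse of a Type~I reduction. Hence the basic BM-diagrams of $P$ and $P'$ both reduce to the same diagram. To conclude that the fully reduced form is independent of the order of reductions, I would prove confluence of the reduction system: for each critical pair (a Type~I overlapping a Type~II, two Type~II reductions on adjacent edges, Type~III interacting with the rest, etc.) I would check that both resulting diagrams can be completed to a common diagram, which is a standard Church--Rosser argument.

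For the ``only if'' direction of Part (2), suppose $w = u^{-1} v u$. I would choose compatible tree pairs so that $u$, $v$ and $u^{-1}$ stack into an open strand diagram for $w$. Forming its closure and performing Type~II reductions on the hourglass-like patterns formed by the copies of $u$ at the top and $u^{-1}$ at the bottom cancels them out, leaving the basic BM-diagram of $v$ up to isomorphism. For the converse, given an isomorphism between the reduced diagrams, I would ``unfold'' each closed diagram into an open strand diagram by choosing, for each directed loop, an edge along which to cut, with the cuts prescribed by a representative of the cutting class. The isomorphism transports one system of cuts to the other, so the resulting open strand diagrams yield tree pairs whose side-identifications assemble into the desired conjugator in $V$.

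The main obstacle I expect is the ``if'' direction of Part (2): producing an honest element of $V$ (a locally order-preserving forest isomorphism) from a purely combinatorial isomorphism of BM-diagrams. Cuts have to be chosen coherently on both diagrams so that the resulting open strand diagrams are compatible; the rotation systems must match under the isomorphism so that the assembled tree pair is plane-order preserving and hence lands in $V_{d,k}$; and the cutting classes must be accounted for so that the total ``winding'' around each directed loop --- which by Proposition~\ref{prop: loops in reduced diagram} encodes the periodic part of the element --- is transported correctly. Juggling these three requirements simultaneously, and verifying that the different choices involved in unfolding yield conjugate, rather than merely equivalent, elements, is where the bulk of the argument will sit.
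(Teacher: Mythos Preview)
The paper does not prove this theorem: it is quoted from Belk and Matucci \cite{bema14} (their Proposition~2.3 and Theorem~2.15), with the remark that the arguments extend verbatim from $V_{2,2}$ to $V_{d,k}$. So there is no in-paper proof to compare against.

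That said, your sketch is a faithful outline of the Belk--Matucci argument. Part~(1) is exactly their strategy: an elementary expansion of a tree pair is the inverse of a Type~I reduction on the associated closed diagram, so all tree pairs for $v$ have a common reduction; well-definedness of the reduced form then follows from local confluence (their Proposition~2.3), which you correctly identify as a Church--Rosser check on critical pairs. For Part~(2), the ``only if'' direction is again their approach: closed strand diagrams turn composition into stacking and conjugation into a cyclic shift, and the hourglass between the $u$ and $u^{-1}$ layers collapses under Type~II reductions. Your plan for the ``if'' direction --- cut the closed diagram open along an admissible representative of the cutting class, transport the cuts through the isomorphism, and read off a conjugating tree pair --- is precisely the content of their ``cutting paths'' machinery, and you have correctly located the real work: ensuring that a single choice of cuts simultaneously respects the graph isomorphism, the rotation systems, and the cohomology classes, so that the unfolded open diagrams genuinely come from plane-order-preserving forest isomorphisms. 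None of this is wrong, but be aware that filling in these compatibilities carefully is the substance of several pages in \cite{bema14}; what you have written is an accurate roadmap rather than a proof.
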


\begin{remark}
Type I* reduction is problematic in this context, as illustrated in Fig. \ref{fig:problemwithIstar}. The BM-diagram on the left corresponds to an element of $V$. Allowing I* reductions, this diagram can be reduced into BM-diagrams of non conjugate elements: the right image corresponds to an element of order 2 in $V$, while the bottom one to the identity.
\end{remark}

\begin{figure}
    \centering
    \includegraphics[width=70mm]{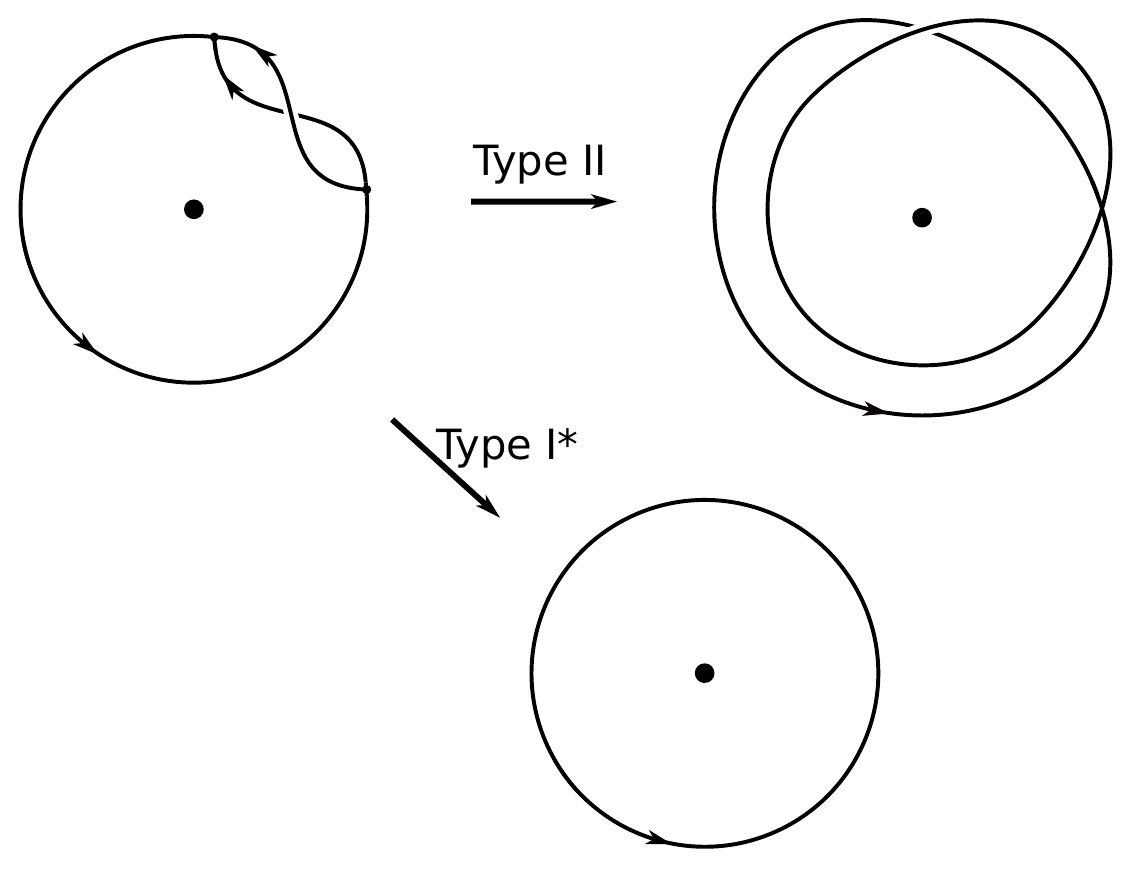}
    \caption{problematic * reductions of an element in $V$.}
    \label{fig:problemwithIstar}
\end{figure}

In Definition \ref{def:basic BM-diagram}, we saw that any tree pair gives rise to a BM-diagram. On the other direction, we now identify which BM-diagrams come from tree pairs.

\begin{definition}
    Let $D$ be a BM-diagram of degree $d$.
	A cutting class of $D$ is called \emph{$k$-admissible} if it has a representative that takes only non-negative values, gives a positive value to every directed cycle, and the sum of the values of all edges is congruent to $k \mod{d-1}$. Such a representative will be called $k$-\emph{admissible}.
\end{definition}

We remark that $k$ in the definition will always be the valency of the root of $\T=\Tdk$.
Note that for an element in $\AAutTdk$, the cutting class of the BM-diagram constructed in Definition \ref{def:basic BM-diagram} is $k$-admissible. Moreover, $k$-admissibility is preserved under reductions.
However, not all representatives of $k$-admissible cutting classes are $k$-admissible.
To construct a tree pair out of a reduced BM-diagram with $k$-admissible cutting class, we have to modify the $k$-admissible representative to a specific form.

\begin{lemma} \label{lem:cutclassrep}
	Let $c$ be a $k$-admissible cutting class on a reduced BM-diagram. Then, $c$ has a $k$-admissible representative $\gamma$ satisfying the following.
	\begin{enumerate}
	    \item For each directed loop, there is exactly one edge on which $\gamma$ is non-zero.
	    \item Outside of directed loops, $\gamma$ is non-zero at most on edges that do not connect a split to a merge.
	    \item If the diagram has vertices, then the sum of all values of $\gamma$ is at least $k$.
	\end{enumerate}
\end{lemma}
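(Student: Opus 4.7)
The plan is to start with an arbitrary $k$-admissible representative $\gamma_0$ of $c$ and successively modify it by coboundaries $\delta f(e) := f(t(e)) - f(o(e))$, since these preserve both the cohomology class $c$ and the key invariants (positive value on each directed cycle and total sum modulo $d-1$). I will achieve conditions (1)--(3) in turn, finishing with a final potential adjustment to restore non-negativity.

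For condition (1), I exploit Proposition \ref{prop: loops in reduced diagram}: the directed loops of $D$ are pairwise disjoint, so I can handle them independently. Let $C$ be a directed loop with vertices $v_1,\ldots,v_n$ (cyclically ordered) and edges $e_j\colon v_j \to v_{j+1}$; choose a distinguished edge $e_C := e_1$ and define $f_C$ supported on $V(C)$ recursively by $f_C(v_1) = 0$ and $f_C(v_{j+1}) = f_C(v_j) - \gamma_0(e_j)$ for $j \geq 2$. Then $\delta f_C$ kills the values of $\gamma_0$ on $e_2,\ldots,e_n$ and concentrates the cycle sum $N_C := \sum_j \gamma_0(e_j) > 0$ on $e_C$. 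Free loops require no modification. Summing these contributions over all loops yields a representative $\gamma_1$ satisfying (1).

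For condition (2), the key structural input is II-reducedness: $D$ has no edge from a merge to a split. Hence the non-loop part of $D$ has a DAG-like flow from splits to merges, and the non-loop split-to-merge edges serve as its "transitions". I add a further coboundary $\delta g$ with $g$ constant on the vertices of each directed loop (to preserve condition (1)) and chosen so that $\gamma_1 + \delta g$ vanishes on every non-loop split-to-merge edge. This reduces to solving a linear system on the quotient graph where each directed loop is collapsed to a point, and the consistency conditions along non-directed cycles traversing split-to-merge edges can be absorbed by suitable choices of the distinguished edges $e_C$ in Step 1, relying on the reduced structure of $D$ and on the specific form of Type I and Type II reductions.

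Finally, for non-negativity and condition (3): since all directed cycle sums remain strictly positive throughout, a standard shortest-path potential (well-defined precisely because positive cycle sums forbid arbitrarily negative path weights) provides a coboundary making the representative non-negative, and this potential can be chosen constant on the equivalence classes forced by conditions (1) and (2). Condition (3) is then achieved because coboundaries of the form $\delta f$ with $f$ supported at a single merge shift the total sum by $d-1$ without affecting values at edges disjoint from that merge, so if the diagram has vertices we can iterate this modification to raise the sum to at least $k$ while keeping the mod-$(d-1)$ class fixed. I expect the main obstacle to be Step 2: establishing the consistency of the system defining $g$ requires careful bookkeeping of cohomology-invariant values around non-directed cycles of $D$, and this is where the full strength of the reduced BM-diagram structure will need to be used.
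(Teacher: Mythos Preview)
Your Step 1 matches the paper. The remaining steps have genuine gaps.

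For condition (2), the obstruction you anticipate is real and in fact fatal to the target as literally stated: take $d=3$, a single split $s$ with a self-loop, a single merge $m$ with a self-loop, and two edges $e_1,e_2\colon s\to m$; then $\gamma(e_1)-\gamma(e_2)$ is the value of $c$ on the undirected cycle $e_1\cdot e_2^{-1}$ and hence a cohomological invariant, so one cannot force both split$\to$merge edges to vanish in general. The paper's proof in fact achieves the \emph{opposite} of the printed condition (2): among non-loop edges, $\gamma$ ends up supported only on split$\to$merge edges. The mechanism is purely local --- at each split not on a loop, push the value of its unique incoming edge forward onto its outgoing edges via the coboundary at that vertex, and dually at each merge not on a loop --- and with the correct push direction each such move only increases the affected adjacent values, so non-negativity is preserved throughout and your shortest-path potential is unnecessary.

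For condition (3), your single-vertex coboundary at a merge $m$ does change the total sum by $d-1$, but it subtracts $1$ from the outgoing edge of $m$ (driving it negative if it was zero, and disturbing condition (1) if that edge lies on a merge loop) and adds $1$ to all incoming edges of $m$, in particular to any merge$\to$merge incoming edge, which destroys the corrected condition (2). The paper's device is global and compatible with everything already achieved: fix a merge loop $M$; take all split$\to$merge edges $f_1,\dots,f_m$ whose unique forward directed path eventually winds around $M$; removing them separates $M$ from the rest of its component, so every undirected cycle meets them evenly often with alternating signs, and adding the same constant to all $f_i$ is cohomologically trivial; now add a sufficiently large multiple of $d-1$. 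This touches only split$\to$merge edges, so conditions (1), (2), non-negativity, and $k$-admissibility all survive intact.
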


\begin{proof}
	Observe that the following elements define trivial cohomology classes. First, let $e$ followed by $e_1,\dots,e_d$ be a split. Then, a function that maps $e$ to $a$ and $e_1,\dots,e_d$ to $-a$ and is zero everywhere else is a coboundary, because clearly it evaluates zero along every directed loop.
	Moreover, the sum of its values on all edges is $a - da$, in particular it is divisible by $d-1$. The analog statement holds for merges.
	
	\begin{figure}[H]
	    \centering
	    \includegraphics{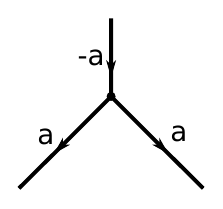}
	    \caption{Part of a representative of the trivial cohomology class.}
	    \label{fig:modyfycohclass}
	\end{figure}
	
	Let $\gamma'$ be an admissible representative of $c$.
	Recall from Proposition \ref{prop: loops in reduced diagram} that a directed loop in a reduced BM-diagram has only splits, has only merges, or has no vertices at all. Note that all split and merge loops are disjoint from one another.
	Using the above  observation, we can first modify $\gamma'$ such that for every split and merge loop there is just one edge with non-zero value.
	The procedure is illustrated in Fig. \ref{fig:modyfycohclassalongcycle}.
	Clearly, this modifications do not destroy admissibility.
	
	\begin{figure}[H]
	    \centering
	    \includegraphics[width=\textwidth]{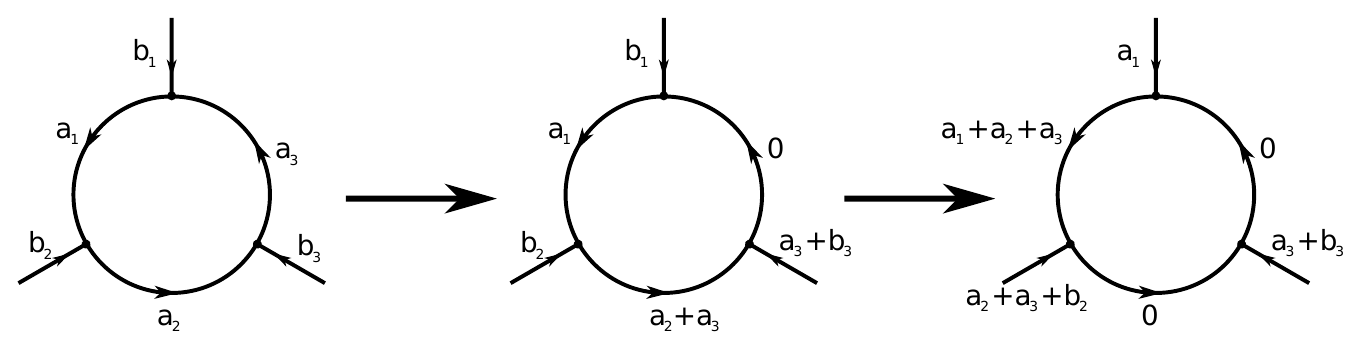}
	    \caption{Modifying the cohomology class representative along a loop.}
	    \label{fig:modyfycohclassalongcycle}
	    	\end{figure}	
	Then we can modify it further such that, outside of the split and merge loops, the incoming edge for every split and the outgoing edge for every merge have value zero, as illustrated in Fig. \ref{fig:modyfycohclasssplit}.
	
	\begin{figure}[H]
	    \centering
	    \includegraphics{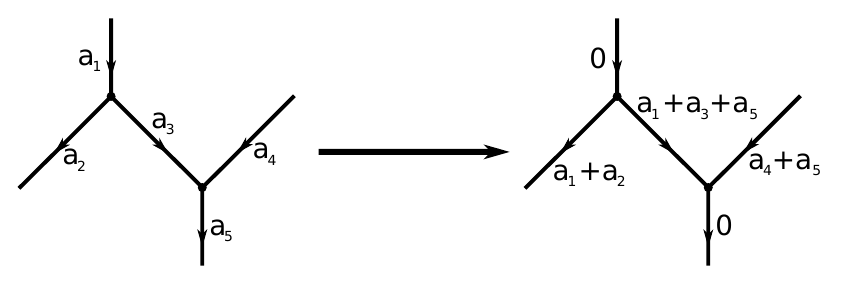}
	    \caption{Modifying the cohomology class representative between splits and merges.}
	    \label{fig:modyfycohclasssplit}
	\end{figure}

	We are left with modifying $\gamma'$ such that the total sum of all values on all edges is at least $k$. Note that if the BM-diagram has only free loops the value of $\gamma$ on each loop is completely determined by $c$.
	
	Hence we can assume that the diagram has at least one merge.
	Note that for every edge $f$ ending in a merge, there is a unique directed, semi-infinite path starting with $f$, and this path eventually winds around a merge loop indefinitely.
	Choose a merge loop $M$ in the reduced BM-diagram. Let $f_1,\dots,f_m$ be all the edges connecting a split to a merge such that this unique directed, bi-infinite path starting at $f_i$ eventually winds around $M$.
	Note that removing $f_1,\dots,f_m$ would split the connected component containing $M$ into two directed components: the one containing $M$ and the rest.
	Therefore, every undirected loop containing one of the $f_i$'s has to contain evenly many of them, and it passes through the $f_i$'s alternatingly in positive and negative direction.
	This implies that adding the same value to $\gamma(f_1),\dots,\gamma(f_m)$ does not change the cohomology class. Hence we can add a sufficiently high multiple of $d-1$ to $\gamma(f_1),\dots,\gamma(f_m)$, without destroying $k$-admissibility, such that the sum of all values of $\gamma'$ is at least $k$.
\end{proof}

The proof of the next proposition explains how to construct a revealing pair out of a reduced BM-diagram with $k$-admissible cutting class. 

\begin{proposition}
\label{prop: a reduced BM-diagram comes from a revealing pair}
Let $(D,r,c)$ be a BM-diagram of degree $d$ with a $k$-admissible cutting class.
If $D$ consists only of free loops we assume that the total value of $c$ on $D$ is at least $k$, otherwise we assume that $D$ is reduced.
Then, there exists a revealing pair $P=[\kappa,T_1,T_2]$ with $T_1,T_2 \subset \Tdk$ such that $(D,r,c)$ is the basic BM-diagram of $P$.
\end{proposition}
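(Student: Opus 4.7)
The plan is to reverse-engineer the basic BM-diagram construction from Definition~\ref{def:basic BM-diagram}. The hourglass reduction there hides a common subtree $T_1 \cap T_2$ inside the diagram and leaves its leaves recorded as the ``units'' of the cutting class. So the natural strategy is: pick a convenient representative of $c$, re-create this hidden subtree, and then hang the rest of $D$ off its leaves as components of $T_1 \setminus T_2$ and of $T_2 \setminus T_1$.

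First I would apply Lemma~\ref{lem:cutclassrep} to obtain a $k$-admissible representative $\gamma$ of $c$ satisfying its three properties. Let $N := \sum_{e} \gamma(e)$; by $k$-admissibility and the hypothesis (or Lemma~\ref{lem:cutclassrep}(3) if $D$ has vertices) we have $N \geq k$ and $N \equiv k \pmod{d-1}$. Hence there exists a complete finite subtree $T_0 \subset \Tdk$ with exactly $N$ leaves (start with the root and its $k$ children, then add carets, each one adding $d-1$ leaves). This $T_0$ will be $T_1 \cap T_2$. I fix a bijection between $\LL T_0$ and the set of ``units'' $\bigsqcup_{e} \{(e,1),\dots,(e,\gamma(e))\}$ taken over edges $e$ with $\gamma(e) > 0$, respecting the rotation system.

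Second, I would build $T_1, T_2 \supset T_0$ by attaching data from $D$. For a leaf $\ell$ of $T_0$ labelled $(e,j)$ with $o(e)$ a split: attach below $\ell$ in $T_1$ a subtree encoding, via the rotation system, the backward tree in $D$ obtained by following arrows backwards from $o(e)$ through splits. By Proposition~\ref{prop: loops in reduced diagram} the directed loops in a reduced diagram are pure and disjoint, so this backward tree meets at most one split loop, which we unfold to finite depth using the index $j$. Do the analogous forward construction in $T_2$ for merges from $t(e)$. For leaves $(e,j)$ with $o(e)$ (resp.~$t(e)$) not a split (resp.~merge), nothing is attached in $T_1$ (resp.~$T_2$), making $\ell$ either neutral or a root of a component on the other side. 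Leaves of $T_0$ assigned to units of free loops are neutral and will cycle.

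Third, define $\kappa \colon \LL T_1 \to \LL T_2$ by tracing through $D$: a leaf $x$ of $T_1$ sits inside a ``backward-unfolded'' split subtree under some $\ell = (e,j) \in \LL T_0$; follow $e$ in $D$ to $t(e)$, then forward through the merge subtree we attached under the appropriate leaf of $T_0$ in $T_2$, arriving at a unique leaf $\kappa(x) \in \LL T_2$. The combinatorics of $D$ together with the rotation system make this well-defined. Finally, one needs to verify the two required things: (a)~the basic BM-diagram of $P := [\kappa, T_1, T_2]$ is exactly $(D,r,c)$, which reduces to the observation that our construction is term-by-term dual to the hourglass reduction, the cohomology class is preserved because Type~II reductions transfer $\gamma$-values in the expected way, and the rotation system agrees because we use the plane order on $\Tdk$; (b)~$P$ is a revealing pair, since every component of $T_1 \setminus T_2$ that we built contains a repeller by construction (coming from the unique $\gamma$-loaded edge on a split loop of $D$), and symmetrically for $T_2 \setminus T_1$, so by Remark~\ref{rem:four_types_of_leafs} all maximal chains are of the four named types.

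The principal obstacle is the bookkeeping in steps two and three: making the correspondences consistent among the position of $\gamma$ on each loop, the ordering of leaves of $T_0$, the rotation system, and the finite unfolding of split and merge loops. The three conditions from Lemma~\ref{lem:cutclassrep} are designed precisely for this: (1)~one $\gamma$-loaded edge per loop tells us exactly where to ``cut'' each loop into a finite repelling/attracting subtree; (2)~vanishing $\gamma$ on split-to-merge edges outside loops guarantees that wandering chains cross $T_0$ cleanly, with no $\gamma$-units to absorb away from the crossing; and (3)~$N \geq k$ lets $T_0$ be chosen inside $\Tdk$ and in particular contain the root caret, so our construction sits inside the prescribed tree.
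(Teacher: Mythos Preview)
Your strategy is the paper's: normalise $\gamma$ via Lemma~\ref{lem:cutclassrep}, realise $N = \sum_e \gamma(e)$ as $|\LL T_0|$ for some complete finite $T_0 \subset \Tdk$ that will serve as $T_1 \cap T_2$, and then rebuild the components of $T_1 \setminus T_0$ and $T_2 \setminus T_0$ from $D$. The paper executes this by three graph operations instead of explicit attachment: cut each edge $e$ at $\gamma(e)$ points to get an open diagram $D'$, glue $D'$ between two oppositely oriented copies of $T_0$ at the cut points, then cut every resulting split-to-merge edge; the two components are $T_1$ and $T_2$, and $\kappa$ records that final cut. This buys you not having to decide what to hang where---the components appear automatically---and the four verification claims ($T_i$ are trees, $T_0 = T_1 \cap T_2$, the basic BM-diagram is $(D,r,c)$, the pair is revealing) become short.

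Your step two as written, however, has a slip that would derail the construction. ``Following arrows backwards from $o(e)$ through splits'' yields a single path, not a tree, since each split has a unique incoming edge; what actually hangs below a leaf of $T_0$ in $T_1$ is the tree obtained by going \emph{forward} from the cut point towards $t(e)$ and then branching out through splits. More importantly, the index $j$ does not set an unfolding depth: among the $\gamma(e)$ units on a split-loop edge $e$, only the one adjacent to $t(e)$ becomes the root of a repelling component, while the remaining $\gamma(e)-1$ units are neutral leaves lying on the repeller chain. Attaching a subtree of variable depth below every $(e,j)$ would produce $\gamma(e)$ repelling components for a single split loop, and the basic BM-diagram of the resulting pair would not return $(D,r,c)$. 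Once this bookkeeping is corrected---or once you adopt the cut--glue--cut formulation---the rest of your outline goes through.
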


\begin{proof}
Fix a representative $\gamma$ of the cutting class as in Lemma \ref{lem:cutclassrep}.
	\begin{figure}[H]
	    \centering
	    \includegraphics{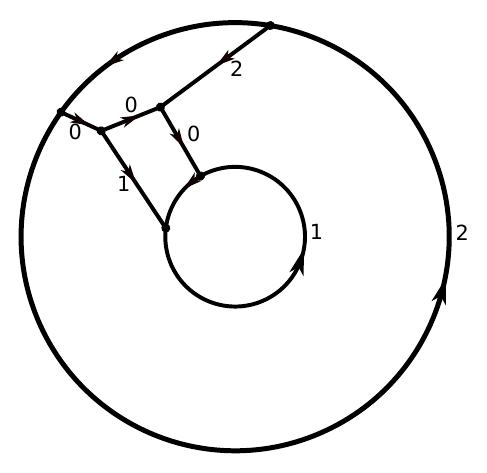}
	    \caption{A reduced BM-diagram with an admissible cutting class representative.}
	    \label{fig:reduced BM-diagram with admissible representative}
	\end{figure}
	
Cut every edge $e$ exactly $\gamma(e)$-many times.
Denote the cut points in $D$ by $p_1,\dots,p_{n}$.
For every cut point $p$ let $p^-$ and $p^+$ denote the copies of $p$ in the new diagram, such that $p^-$ is always the origin and $p^+$ the terminus of an edge. We denote this new diagram by $D'$.
	\begin{figure}[H]
	    \centering
	    \includegraphics{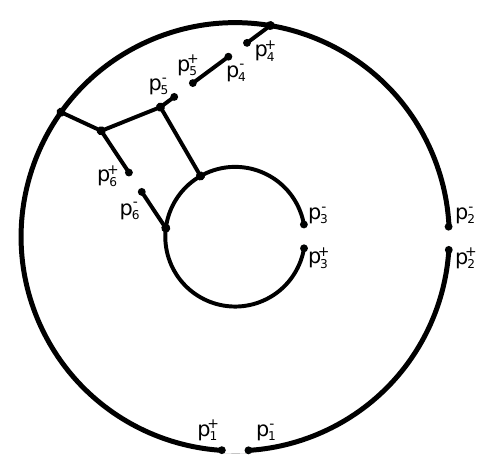}
	    \caption{The diagram $D'$.}
	    \label{fig:the diagram D'}
	\end{figure}
Let $T\subset \T$ be a finite complete tree with $n$ leaves. Note that such a tree exists because of the possible values $n$ can attain. Denote the leaves of $T$ by $p_1,\dots,p_n$.
	\begin{figure}[H]
	    \centering
	    \includegraphics{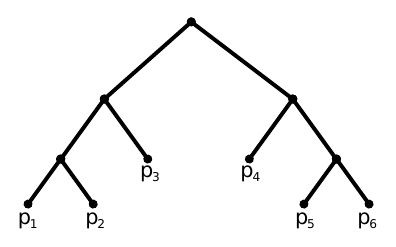}
	    \caption{The tree $T$.}
	    \label{fig:the tree T}
	\end{figure}
Let $T^-$ be a copy of $T$ in which all edges are directed away from the root. Similarly let $T^+$ be a copy of $T$ in which all edges are directed towards the root.

Glue $D'$ to $T^-$ and $T^+$ by identifying each $p_i^-$ with the $p_i$ in $T^-$ and each $p_i^+$ with the $p_i$ in $T^+$.
In particular, for each gluing point $p$, the edge ending at $p$ and the edge starting at $p$ are merged to the same edge. In other words, $p$ becomes the middle point of an edge.
We obtain a connected directed graph $G$.
	\begin{figure}[H]
	    \centering
	    \includegraphics{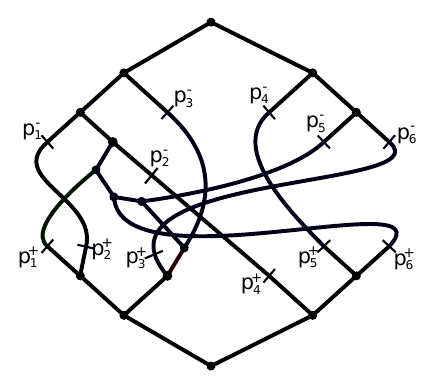}
	    \caption{The graph $G$.}
	\end{figure}
Observe that every maximal directed path in $G$ starts at the root of $T^-$ and ends in the root of $T^+$, and there is precisely one edge on it that lies between a split and a merge.
Cut every edge of $G$ connecting a split to a merge, let $q_1,\dots,q_m$ be these cutting points.
	\begin{figure}[H]
	    \centering
	    \includegraphics{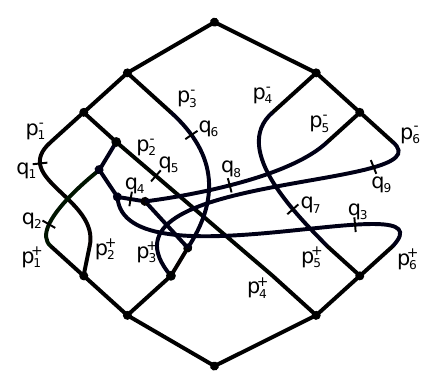}
	    \caption{The cutting points in $G$.}
	\end{figure}
Now we have two connected components, $T_1$ and $T_2$, with the property that all inner vertices of $T_1$ except the root of $T^- \subset T$ are splits and all inner vertices of $T_2$ except the root of $T^+ \subset T_2$ are merges. 
Every cut point $q$, is split to a leaf $q^-$ of $T_1$ and a leaf $q^+$ of $T_2$.
\begin{figure}[H]
    \centering
    \subfloat[The tree $T_1$]{\includegraphics[scale=1.1]{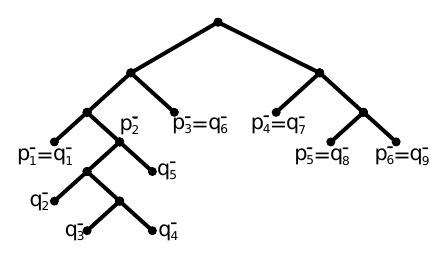}}%
    \qquad
    \subfloat[The tree $T_2$]{\includegraphics[scale=1.1]{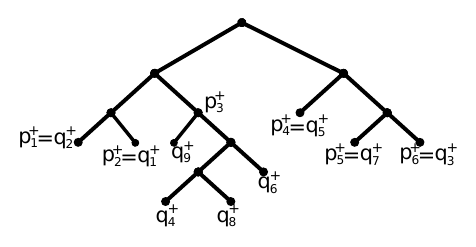}}%
    \caption{The trees in the tree pair.}%
    \label{fig:the trees in the tree pair}%
\end{figure}
Define $\kappa:\LL T_1 \to \LL T_2$ by $\kappa(q^-) = q^+$. 
The plane order on $T_1$ and $T_2$ is inherited from $T$ and $r$.
	
\emph{Claim 1:} $T_i$ are trees.

 Observe that $T_1$ consists of all paths in $G$ that start in the root of $T^+$ and end in a cut point $q$.
 Therefore $T_1$ is connected. To show that it does not have any loops, note that every undirected loop has splits and merges, which is not possible in $T_1$. Therefore a loop in $T_1$ would have to be a split loop. But this is impossible since no edge in a split loop can lie on a path from the root to a leaf. A similar argument works for $T_2$.
 
\emph{Claim 2:} $T = T_1 \cap T_2$.

The inclusion $\subseteq$ is obvious.
If this inclusion is strict, there has to be leaf $p\in \LL T$ that is an inner point of $T_1 \cap T_2$.
But then the edge in $D'$ starting in $p^-$ ends in a split, while the edge ending in $p^+$ starts by a merge. This cannot happen by Proposition~\ref{prop: loops in reduced diagram}.

\emph{Claim 3:} $(D,r,c)$ is the basic BM-diagram of $[\kappa,T_1,T_2]$.

This follows directly from the construction. Note that nowhere in the process did we modify $r$.

\emph{Claim 4:} The tree pair $[\kappa,T_1,T_2]$ is a revealing pair.

Note that a component of $T_2 \setminus T_1$ is isomorphic as plane ordered tree to a connected component of $D'$ that has only merges, and following the orbit of the root is the same as travelling along the corresponding directed cycle in $D$.
\end{proof}

\begin{remark}
Examining the construction from the proof of Proposition \ref{prop: a reduced BM-diagram comes from a revealing pair} we see that $P=[\kappa,T_1,T_2]$ satisfies the following. Every merge loop of $D$ with $\alpha$ merges and cutting class value $\mu$ corresponds to an attracting point in $P$ with attracting length $\alpha$ and period $\mu$.
Similarly every split loop of $D$ with $\rho$ splits and cutting class value $\nu$ corresponds to a repelling point in $P$ with repelling length $\rho$ and period $\nu$.
Every free loop of $D$ with cutting class $\omega$ corresponds to a periodic maximal chain in $P$ of length $\omega$.
\end{remark}

\section{Elliptic-hyperbolic decomposition} \label{sec: ell-hyp decomposition}

		In this section $\T = \T_{d,k}$ for $d \geq 2$ and $k \geq 1$.
	
	Le Boudec and Wesolek divide tree almost automorphisms into elliptic elements and translations, mimicking the division in $\AutT$, see Section 3 in \cite{lbw19}. However, while translations in $\AutT$ act on $\dT$ as one might expect from the term - there is one attracting point, one repelling point, and all other boundary points travel from the repelling to the attracting point - things in $\AAutT$ are more complicated. A translation can have several attracting and repelling points in the boundary, each with a different translation length. Those points may not even be fixed, but could have finite orbits. Points around one repelling point can distribute themselves to several attracting points. On top of that, looking at some balls might even give the impression that we are not dealing with a translation at all, as they will return to themselves again and again. In this section we try to shed light on the possible dynamic behaviour of tree almost automorphisms.
	We define a notion of hyperbolic elements in $\AAutT$, which will be a subset of Le Boudec's and Wesolek's translations. They will be those translations that show only trivial elliptic behaviour. We show that every element $g$ admits a unique decomposition $g=g_e g_h$ into an elliptic element $g_e$ and a hyperbolic element $g_h$ having disjoint supports. Towards the end of the section we also prove that for two elements to be conjugate, it is essentially enough if both of their factors are conjugate.
	
	\subsection{Dynamic characterization of boundary points}
	
	For a tree almost automorphism $g$ we examine the different kinds of boundary points with respect to the dynamics of $g$.
	
	\begin{definition} \label{def:characertisationofbdrypoints}
		Let $g \in \AAutT$ and $\eta \in \dT$. We call $\eta$
		\begin{enumerate}
			\item an \emph{attracting point} for $g$ if for every \nh~$B$ of $\eta$ there exists a neighbourhood $U\subseteq B$ of $\eta$ and an integer $n>0$ such that $g^n(U)\subsetneq U$. \label{g attracting}
			\item a \emph{repelling point} for $g$ if for every \nh~$B$ of $\eta$ there exists a neighbourhood $U\subseteq B$ of $\eta$ and an integer $n>0$ such that $g^n(U)\supsetneq U$. \label{g repelling}
			\item a \emph{stable point} for $g$ if for every \nh~$B$ of $\eta$ there exists a neighbourhood $U\subseteq B$ of $\eta$ and an integer $n>0$ such that $g^n(U)=U$. \label{g stable}
			\item a \emph{wandering point} for $g$ if there exists a \nh~$U$ of $\eta$ such that $g^n(U)\cap U=\emptyset$ for every $n>0$. \label{g wandering}
		\end{enumerate}
	    We denote the sets of attracting, repelling, stable and wandering points for $g$ by $\Att(g)$, $\Rep(g)$, $\operatorname{St}(g)$ and $\Wan(g)$.
	\end{definition}

    \begin{remark}\label{rem:pointcharakterisationofpowers}
    	It is obvious from the definition that $\Att(g)=\Att(g^{k})$, $\Rep(g)=\Rep(g^k)$ and $\Att(g)=\Rep(g^{-k})$ for all $k > 0$. Also we can easily see that $\operatorname{St}(g)=\operatorname{St}(g^{k})$ and therefore also $\Wan(g)=\Wan(g^{k})$ for all integers $k \neq 0$.
    \end{remark}
	
	We show that the possibilities from Definition \ref{def:characertisationofbdrypoints} are mutually exclusive and cover the whole boundary.
	
	\begin{proposition}\label{prop:decomposition of dT}
		Let $g \in \AAutT$ and $\eta\in\dT$. Then, $\eta$ is either attracting, repelling, wandering or stable for $g$, and these possibilities are mutually exclusive. Furthermore, $\Att(g)$ and $\Rep(g)$ are finite, $\Wan(g)$ is open, and $\operatorname{St}(g)$ is clopen. Consequently $\overline{\Wan(g)}=\Wan(g) \cup \Att(g) \cup \Rep(g)$.
	\end{proposition}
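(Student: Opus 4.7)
The plan is to fix a revealing pair $P = \tp{g}{T_1}{T_2}$ for $g$, using Lemma~\ref{lem: constructing a revealing pair}, and to analyze the dynamics of $g$ on the finite clopen partition $\{\dT_y\}_{y \in \LL T_1}$ of $\dT$. By Remark~\ref{rem:four_types_of_leafs}, each leaf of $T_1$ lies in a unique maximal chain, which is either attractor, repeller, periodic, or wandering. My strategy is to show that the dynamical type of $\eta \in \dT$ in the sense of Definition~\ref{def:characertisationofbdrypoints} is determined by the type of the chain containing the leaf $y \in \LL T_1$ with $\eta \in \dT_y$, and to deduce all the listed assertions from this correspondence.

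For a periodic chain $(z_0,\dots,z_{n-1})$ with $\kappa(z_{n-1}) = z_0$, one has $g^n(\dT_{z_i}) = \dT_{z_i}$, so each ball is already a stable neighbourhood. To produce such a neighbourhood inside a prescribed ball $B \ni \eta$, I choose a sub-ball $\dT_{z_0'} \subset B$ deep enough to avoid the finite exceptional part of a representative of $g^n$; then $g^n$ permutes the finitely many sub-balls of that depth, and a further power fixes $\dT_{z_0'}$, so $\eta$ is stable. For a wandering chain $(z_0,\dots,z_N)$, I show that $\dT_{z_0}$ itself is a wandering set. The balls $\dT_{z_0},\dots,\dT_{z_{N-1}}$ are pairwise disjoint because $z_0,\dots,z_{N-1}$ are distinct leaves of $T_1$, and $\dT_{z_0} \cap \dT_{z_N} = \emptyset$ because containment in either direction would produce attractor or repeller behaviour, contradicting the chain type. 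Moreover, the unique leaf $w \in \LL T_1$ with $\dT_{z_N} \subset \dT_w$ is the root of a component of $T_2 \setminus T_1$ and hence begins an attractor chain, whose balls are distinct leaves of $T_1$ and therefore disjoint from $\dT_{z_0}$; this yields $g^m(\dT_{z_0}) \cap \dT_{z_0} = \emptyset$ for every $m > 0$, so every point in $\dT_{z_0}$ is wandering.

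For an attractor chain $(z_0,\dots,z_n)$ with attractor $z_n$ a strict descendant of $z_0$, repeated rolling with the attractor component produces revealing pairs whose attractor lies arbitrarily deep below $z_0$. This forces the nested sequence $\dT_{z_0} \supsetneq g^n(\dT_{z_0}) \supsetneq g^{2n}(\dT_{z_0}) \supsetneq \dots$ to have diameter tending to zero, so its intersection is a single $g^n$-fixed point $\eta_{\mathrm{att}}$, easily seen to be attracting. Its $g$-orbit contributes one attracting point to each ball $\dT_{z_i}$, $0 \leq i < n$, and every other point of these balls is wandering: the disjointness argument from the wandering-chain case applies inside the annuli $g^{kn}(\dT_{z_0}) \setminus g^{(k+1)n}(\dT_{z_0})$ and their $g^i$-translates. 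Repeller chains are handled symmetrically via $g^{-1}$ and backward rollings. Since every $\eta \in \dT$ lies in a unique ball $\dT_y$, this yields a full classification, and the four possibilities are mutually exclusive.

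All remaining assertions now follow. The sets $\Att(g)$ and $\Rep(g)$ consist of one $g$-orbit per attractor, respectively repeller, chain, of which there are only finitely many. The set $\operatorname{St}(g)$ is the union of the balls $\dT_y$ for $y$ in a periodic chain, a finite union of clopen balls, hence clopen. The set $\Wan(g)$ is open because every wandering point has, by construction, an explicit wandering open neighbourhood. Finally, each attracting (respectively repelling) point is the limit of the shrinking family of wandering annuli constructed above, so $\Att(g) \cup \Rep(g) \subset \overline{\Wan(g)}$; conversely, stable points lie in the clopen set $\operatorname{St}(g)$, which is disjoint from $\Wan(g)$ and therefore from its closure. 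I expect the main technical obstacle to be the attractor analysis: controlling the iterates $g^{kn}(\dT_{z_0})$ carefully enough to guarantee diameter tending to zero requires the rolling construction to produce arbitrarily deep descendant attractors.
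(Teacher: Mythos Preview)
Your proposal is correct and follows essentially the same approach as the paper, which packages the leaf-by-leaf analysis of a revealing pair as Lemma~\ref{lem:leaves of a revealing pair} and then deduces the proposition from it in one line. Your anticipated obstacle in the attractor case is not real and rollings are unnecessary: since the attractor $z_n$ lies at distance $\ell \geq 1$ below $z_0$, the iterate $g^{kn}(\dT_{z_0})$ is simply the ball around the vertex $\phi^{kn}(z_0)$ at depth $k\ell$ below $z_0$, so the diameters shrink to zero directly---this is exactly the argument the paper gives in the proof of Lemma~\ref{lem:leaves of a revealing pair}.
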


	This proposition follows directly from the following lemma connecting the different points of the boundary to revealing pairs. The basic idea of this lemma is already present in \cite{sd10}, Proposition 2.
	Recall the relevant terms given in Definition \ref{def:maximal_chains}.

	\begin{lemma}\label{lem:leaves of a revealing pair}
		Let $g \in \AAutT$ and let $\tp{g}{T_1}{T_2}$ be a revealing pair associated with $g$.
		Let $\foris{\phi}{T_1}{T_2}$ be the corresponding representative for $g$.
		Let $v$ be a leaf of $T_1$.
		\begin{enumerate}
			\item If $v$ is a periodic leaf, then $\dT_v \subset \operatorname{St}(g)$.
			\item If $v$ is in a wandering chain, then $\dT_v \subset \Wan(g)$.
			\item \label{item:attractorlength} If $v$ is in an attractor chain, then $\dT_v$ contains a unique attracting point $\eta$, and $\dT_v \setminus \{\eta\} \subset \Wan(g)$.
			
			More precisely, let $m$ be the period of the attractor. Then $\eta$ is the boundary point defined by the sequence  $(\phi^{km}(v))_{k \in \mathbb{N}}$.
			\item If $v$ is in a repeller chain, then $\dT_v$ contains a unique repelling point $\eta$, and $\dT_v \setminus \{\eta\} \subset \Wan(g)$.
			
			More precisely, let $m$ be the period of the repeller. Then $\eta$ is the boundary point defined by the sequence $(\phi^{-km}(v))_{k \in \mathbb{N}}$.
		\end{enumerate}
	\end{lemma}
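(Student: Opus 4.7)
The plan is to split by chain type and trace the action of $\phi$ and $g$ on $\dT_v$ in each case. For case (1), the periodicity $\phi^m(v)=v$ implies that $\phi^m$ restricts to an automorphism of $\T_v$ fixing the root $v$. Because $\Tdk$ is locally finite, each sphere around $v$ is finite and is preserved by $\phi^m$, so every $\phi^m$-orbit of a vertex in $\T_v$ is finite. Given $\eta\in\dT_v$ and a neighborhood $B\ni\eta$, I shrink to a ball $\dT_w\subseteq B$ with $w$ a descendant of $v$; then $\phi^{mk}(w)=w$ for some $k\ge 1$, whence $g^{mk}(\dT_w)=\dT_w$, giving $\eta\in\operatorname{St}(g)$.

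For case (3), the attractor condition $x_m$ being a descendant of $x_0$ yields $g^m(\dT_{x_0})=\dT_{\phi^m(x_0)}\subsetneq\dT_{x_0}$. Iterating produces a nested sequence of balls $\dT_{x_0}\supsetneq\dT_{\phi^m(x_0)}\supsetneq\dT_{\phi^{2m}(x_0)}\supsetneq\cdots$ whose intersection is a single boundary point $\eta=\lim_k\phi^{km}(v)$, and these balls witness $\eta\in\Att(g)$. For $\eta'\in\dT_v\setminus\{\eta\}$, I take the minimal $K\ge 1$ with $\eta'\notin\dT_{\phi^{Km}(x_0)}$ and choose a small ball $U\ni\eta'$ inside $\dT_v\setminus\dT_{\phi^{Km}(x_0)}$ that also avoids $\dT_{x_1},\dots,\dT_{x_{m-1}}$; this avoidance is possible provided $m$ is the minimal period, so that none of $x_1,\dots,x_{m-1}$ is a descendant of $x_0$. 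Then for any $k\ge 1$ written as $k=jm+i$ with $0\le i<m$, one has $g^k(U)\subseteq\dT_{\phi^{jm}(x_i)}$, which is contained in either $\dT_{\phi^{Km}(x_0)}$ or $\dT_{x_i}$, and is thus disjoint from $U$; so $\eta'\in\Wan(g)$. Case (4) follows by applying (3) to $g^{-1}$ with the revealing pair $[\kappa^{-1},T_2,T_1]$, which exchanges attractors with repellers.

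For case (2), the wandering chain $(x_0,\dots,x_N)$ satisfies $x_0\notin T_2$ and $x_N\notin T_1$. For $1\le j\le N$, the image $g^j(\dT_{x_0})=\dT_{x_j}$ is disjoint from $\dT_{x_0}$, because by Remark \ref{rem:four_types_of_leafs} a descendance relation between $x_0$ and any $x_j$ would force $x_0$ into an attractor or repeller chain. For $k>N$ the orbit has escaped the chain (since $x_N\notin T_1$), and the crucial non-return claim $g^k(\dT_{x_0})\cap\dT_{x_0}=\emptyset$ for all $k\ge 1$ then follows because any return would produce a closed orbit, forcing $x_0$ into a periodic, attractor, or repeller chain and contradicting the hypothesis. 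The main obstacle is exactly this non-return argument, together with the analogous intermediate-power analysis in case (3): controlling $g^k$ beyond the finite chain length requires careful bookkeeping with the revealing-pair structure and the four-type classification of chains from Remark \ref{rem:four_types_of_leafs}.
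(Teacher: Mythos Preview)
Your approach matches the paper's: case-by-case along chain types, reducing (4) to (3) via $g\mapsto g^{-1}$, and for (3) exhibiting the nested balls $\dT_{\phi^{km}(x_0)}$ to pin down $\eta$ and then showing the complement is wandering. Two points deserve tightening.

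\medskip
\textbf{Case (3), the wandering part.} Your choice of $U$ is slightly too coarse. You take $U\subseteq\dT_v\setminus\dT_{\phi^{Km}(x_0)}$ and then claim, for $k=jm$ with $i=0$, that $g^{jm}(U)\subseteq\dT_{\phi^{Km}(x_0)}$. This only holds when $j\ge K$; for $1\le j<K$ the ball $\dT_{\phi^{jm}(x_0)}$ properly contains $\dT_{\phi^{Km}(x_0)}$, so $g^{jm}(U)$ may well meet $U$. The fix is to use the minimality of $K$: since $\eta'\in\dT_{\phi^{(K-1)m}(x_0)}\setminus\dT_{\phi^{Km}(x_0)}$, choose $U$ inside that inner annulus. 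Then for every $j\ge 1$ one has $g^{jm}(U)\subseteq\dT_{\phi^{(K-1+j)m}(x_0)}\subseteq\dT_{\phi^{Km}(x_0)}$, which is disjoint from $U$, and the $i\ne 0$ case goes through as you wrote. (Incidentally, the requirement that $U$ ``avoid $\dT_{x_1},\dots,\dT_{x_{m-1}}$'' is automatic once $U\subseteq\dT_{x_0}$, since $x_0,\dots,x_{m-1}$ are distinct leaves of $T_1$; no minimality-of-period hypothesis is needed.) The paper's own write-up takes $U=\dT_v\setminus\phi^{nm}(\dT_v)$ and has the same slip for small multiples of $m$; your version is actually closer to a clean argument once you shrink $U$ as above.

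\medskip
\textbf{Case (2), the non-return.} Your justification that a return ``would produce a closed orbit, forcing $x_0$ into a periodic, attractor, or repeller chain'' is not quite right: a relation $g^k(\dT_{x_0})\cap\dT_{x_0}\neq\emptyset$ for $k>N$ does not by itself manufacture a chain of the forbidden type in the given revealing pair. The clean argument tracks the orbit explicitly. After $N-i$ steps, $g^{N-i}(\dT_{x_i})=\dT_{x_N}$ sits inside $\dT_{r_0}$, where $r_0\in\LL T_1$ is the root of the attracting component containing $x_N$; thereafter $g^{N-i+j}(\dT_{x_i})\subseteq\dT_{r_{j\bmod p}}$, where $(r_0,\dots,r_p)$ is the attractor chain of period $p$. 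Thus for every $n\ge 1$, $g^n(\dT_{x_i})$ lies in $\dT_w$ for some $w\in\{x_{i+1},\dots,x_{N-1},r_0,\dots,r_{p-1}\}\subset\LL T_1\setminus\{x_i\}$, hence is disjoint from $\dT_{x_i}$. This is presumably what the paper means by ``obvious from the definitions'' once the revealing-pair structure is in hand.
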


	\begin{proof}
		The first and second statements are obvious from the definitions.
		The last statement is equivalent to the third after replacing $g$ with $g^{-1}$ because of Remark \ref{rem:pointcharakterisationofpowers}.
		
		To prove the third statement, observe that $\dT_v \supsetneq \phi^m(\dT_v)=\dT_{\phi^m(v)} \supsetneq \phi^{2m}(\dT_v) = \dT_{\phi^{2m}(v)} \supsetneq \dots$ and $\bigcap_{k \geq 1} \phi^{mk}(\dT_v) = \{\eta\}$. 
		Moreover, every neighbourhood of $\eta$ contains $\phi^{mk}(\dT_v)$ for large enough $k$. Therefore $\eta$ is indeed an attracting point.
		On the other hand, let $\xi \neq \eta$ be a point in $\dT_v$.
		Then, there exists $n > 0$ such that $\xi \notin \phi^{nm}(\dT_v)$.
	Let $U=\dT_v \setminus \phi^{nm}(\dT_v)$.
	Note that $\phi^k(U)$ is disjoint from $\dT_v$ if $m$ does not divide $k$, and is contained in $\phi^{nm}(\dT_v)$ if $m$ does divide $k$. This shows that $\xi$ is indeed a wandering point.
	\end{proof}
	
	The previous lemma implies that the following are well-defined.
	
	\begin{definition}
	    Let $g \in \AAutT$. Let $\eta$ be an attracting point of $g$.
	    With the notations as in Item \ref{item:attractorlength} of the previous lemma we call $m$ the \emph{period} of $\eta$ and $\operatorname{dist}(v,\phi^m(v))$ the \emph{attracting length} of $\eta$.
	    The \emph{period} and \emph{repelling length} of a repeller are defined in a similar fashion.
	\end{definition}

	\subsection{Dynamic characterization of almost automorphisms}
	
	Now we classify tree almost automorphisms according to their dynamic behaviour.
	
	\begin{definition}\label{def: elliptic}
		We call $g\in\AAutT$ \textit{elliptic} if $\eell(g)=\dT$.
		We call $g\in\AAutT$ \textit{hyperbolic} if it is not the identity and  $g|_{\operatorname{St}(g)}=id$. Denote by $\Ell$ and $\Hyp$ the sets of all elliptic and hyperbolic elements in $\AAutT$.
	\end{definition}
	
    Our definition of an elliptic element coincides with Definition 1.1 in \cite{lbw19}, see Lemma \ref{lem:ell=phiTT}.
	
	\begin{remark}
	Note that $\Ell$ is a clopen subset of $\AAutT$ and $\Hyp$ is closed, but not necessarily open.
		Clearly the classes $\Ell$ and $\Hyp$ are invariant under conjugation in $\AAutT$.
	\end{remark}
	
	\begin{definition}\label{def:EH decomposition}
		Let $g\in\AAutT$.
		We define $g_e \in \Ell$ by $g_e|_{\eell(g)}=g$ and $g_e|_{\overline{\Wan(g)}}=id$.
		Similarly we define $g_h \in \Hyp$ by $g_h|_{\eell(g)}=id$ and $g_h|_{\overline{\Wan(g)}}=g$.
		We call the decomposition $g=g_eg_h$ the \textit{elliptic-hyperbolic (EH) decomposition of $g$}.
	\end{definition}
	
	It is easy to see that the EH decomposition is the unique way of writing an element as product of an elliptic element and a hyperbolic element with disjoint supports.
	It is not surprising that the decomposition is a homeomorphism onto its image.
	
	\begin{lemma}
	The map $\AAutT \to \Ell \times \Hyp, \, g \mapsto (g_e,g_h)$ is injective, continuous and closed.
	\end{lemma}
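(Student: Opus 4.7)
The plan is to prove the three properties in sequence. Injectivity is immediate from the identity $g = g_e g_h$. Continuity is the heart of the argument, and closedness will follow formally from continuity together with the Hausdorffness of $\AAutT$.

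For continuity, the key observation is that small perturbations preserve the combinatorial structure of any chosen revealing pair, and therefore preserve the sets $\eell(g)$ and $\overline{\Wan(g)}$ that govern the EH decomposition. Fix $g \in \AAutT$ and a basic identity neighbourhood $\Fix(T_0)$. I would first choose a revealing pair $\tp{g}{T_1}{T_2}$ for $g$ with $T_2 \supseteq T_0$; such a pair exists by enlarging any tree pair so that its second tree contains $T_0$ and then invoking Lemma~\ref{lem: constructing a revealing pair}. For $g_n \to g$, eventually $h_n := g_n g^{-1}$ lies in $\Fix(T_2)$, so $g_n = h_n g$. Since $h_n$ fixes $T_2$ pointwise, $\overline{g_n} = \overline{g}$, so the tree pair $\tp{g_n}{T_1}{T_2}$ coincides with $\tp{g}{T_1}{T_2}$ as combinatorial data. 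Because the revealing condition depends only on this data, $\tp{g_n}{T_1}{T_2}$ is also a revealing pair for $g_n$, and combining Lemma~\ref{lem:leaves of a revealing pair} with Proposition~\ref{prop:decomposition of dT} gives that $\eell(g_n)$ and $\overline{\Wan(g_n)}$ are the same explicit unions of boundary balls $\dT_x$ as for $g$; in particular $\eell(g_n) = \eell(g)$ and $\overline{\Wan(g_n)} = \overline{\Wan(g)}$.

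A short calculation from Definition~\ref{def:EH decomposition} then shows that $(g_n)_e g_e^{-1}$ acts as $h_n$ on $\eell(g)$ and as the identity on $\overline{\Wan(g)}$. Since $\eell(g)$ is a union of balls $\dT_y$ with $y \in \LL T_2$ and $h_n$ preserves each such ball, the element $(g_n)_e g_e^{-1}$ lies in $\Fix(T_2) \subseteq \Fix(T_0)$; a symmetric argument handles $(g_n)_h g_h^{-1}$. This establishes continuity.

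For closedness, let $F \subseteq \AAutT$ be closed and suppose $((g_n)_e, (g_n)_h) \to (e, h)$ with $g_n \in F$. Then $g_n = (g_n)_e (g_n)_h \to eh$ by continuity of multiplication, so $eh \in F$. The continuity just proved gives $((g_n)_e, (g_n)_h) \to ((eh)_e, (eh)_h)$, and Hausdorffness of $\AAutT$ forces $(e, h) = ((eh)_e, (eh)_h)$, which lies in the image. The principal obstacle throughout is the stability of the revealing pair under small perturbations: once this is in hand, Lemma~\ref{lem:leaves of a revealing pair} transports the structural description of $\eell$ and $\overline{\Wan}$ from $g$ to $g_n$, and all remaining steps reduce to routine manipulations of restrictions of $h_n$.
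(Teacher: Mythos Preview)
Your proof is correct. The overall strategy---injectivity from $m\circ f=\mathrm{id}$, continuity via stability of revealing pairs, closedness from continuity plus Hausdorffness---is sound, and the key computation showing $(g_n)_e g_e^{-1}\in\Fix(T_2)$ is carried out correctly once you observe that $\eell(g)$ and $\overline{\Wan(g)}$ are each unions of balls $\dT_y$ with $y\in\LL T_2$.

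The paper argues somewhat differently. For continuity, rather than fixing a revealing pair for $g$ and pushing it to nearby elements, it checks the two coordinate maps $f_e$ and $f_h$ separately by describing the preimage of a basic open set in $\Ell$ (resp.\ $\Hyp$) directly in terms of tree-pair data. For closedness, the paper does not reuse continuity: it instead writes $f(F)=m^{-1}(F)\cap\operatorname{Im}(f)$ and then proves that $\operatorname{Im}(f)=\{(g,h)\in\Ell\times\Hyp\mid \supp(g)\cap\supp(h)=\emptyset\}$ is closed by an explicit open-complement argument. Your route is more economical---closedness drops out almost for free once continuity and $m\circ f=\mathrm{id}$ are in hand---while the paper's route yields as a byproduct an intrinsic description of the image. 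One small remark: your closedness argument is sequential, so you are implicitly using that $\Ell\times\Hyp$ is first countable (which it is, since $\AAutT$ has a countable basis of identity neighbourhoods); and ``Hausdorffness of $\AAutT$'' should strictly be ``Hausdorffness of $\Ell\times\Hyp$'', though of course the latter follows from the former.
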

	
	\begin{proof}
	We denote the decomposition map by $f\colon \AAutT \to \Ell \times \Hyp, \, f(g):=(g_e,g_h)$, and the multiplication map by $m(g,h):= g h$. Note that $m \circ f = id$.
	
	Injectivity of $f$ is obvious as $f$ is a right-inverse of the multiplication map.
	
	Continuity can be checked separately on $f_e(g):= g_e$ and $f_h(g):= g_h$.
	Let $h \in \Ell$ be an almost automorphism and let $\tp{h}{T}{T}$ be a revealing tree pair associated with $h$. (We use here the fact that every elliptic element admits an associated tree pair of this form, see \ref{lem:ell=phiTT} or \cite{lbw19}, Proposition 3.5.)
	Let $O$ be the set of all elliptic elements allowing a tree pair $\tp{h}{T}{T}$. Observe that $f_e^{-1}(O)$ consists of all almost automorphisms allowing a tree pair $P=[\kappa,T_1,T_2]$ such that the periodic leaves of $P$ are contained in $\LL T$ and $\kappa$ coincides with $\overline{h}$ on these periodic points. Together with Remark \ref{rem:tree pairs define topology} this shows that $f_e^{-1}(O)$ is open.
	The argument why $f_h$ is continuous is similar.
	
	To show that $f$ is closed, note that for every closed set $F \subset \AAutT$ holds $f(F)= m^{-1}(F) \cap \operatorname{Im}(f)$. It is therefore enough to show that the image of $f$ is closed.
	We will show that its complement is open. Observe that $\operatorname{Im}(f) = \{(g,h) \in \Ell \times \Hyp \mid \supp(g) \cap \supp(h) = \emptyset \}$ and let $(g,h) \notin \operatorname{Im}(f)$. If $g \notin \Ell$ or $h \notin \Hyp$, then $\Ell^c \times \AAutT$, $\AAutT \times \Hyp^c$ or $\Ell^c \times \Hyp^c$ is an open neighbourhood of $(g,h)$ disjoint from $\operatorname{Im}(f)$. Assume therefore that $g \in \Ell$ and $h \in \Hyp$ such that $\supp(g) \cap \supp(h) \neq \emptyset$.
	Since $\supp(h)$ is clopen, this implies that there exists an $\xi \in \dT$ such that $g(\xi) \neq \xi$ and $h(\xi) \neq \xi$, since otherwise $\{\xi \in \dT \mid g(\xi) \neq \xi \} \subset \supp(h)^c$ and this would imply $\supp(g) \subset \supp(h)^c$.
	Then there exists a vertex $x$ of $\T$ such that $g$ and $h$ have representatives $\phi,\psi$ such that $\phi(x) \neq x$ and $\psi(x) \neq x$. Let $O_1$ be the set of all almost automorphisms having a representative mapping $x$ to $\phi(x)$ and $O_2$ the set of all almost automorphisms having a representative mapping $x$ to $\psi(x)$. Then $O_1 \times O_2$ is an open neighbourhood of $(g,h)$ disjoint from $\operatorname{Im}(f)$.
	\end{proof}

	Note that $\supp(g_e)=\eell(g)\cap \supp(g)$ and $\supp(g_h)=\overline{\Wan(g)}$.
	The next lemma shows that the conjugacy problem on $\AAutT$ can be reduced to each of the classes $\Ell$, $\Hyp$ separately.
	
	\begin{proposition}\label{prop:conj_can_be_checked_on_ell_and_hyp_seperately}
		Let $g,f\in\AAutT$ and let $g=g_eg_h$ and $f=f_ef_h$ be their \ehdecom s.
		Then $g,f$ are conjugate in $\AAutT$ if and only if
		\begin{enumerate}
			\item $g_e$ is conjugate to $f_e$ and $g_h$ is conjugate to $f_h$; and
			\item either $\supp(f)=\supp(g)=\dT$, or both $\supp(f)\neq \dT$, $\supp(g)\neq \dT$.
		\end{enumerate}
	\end{proposition}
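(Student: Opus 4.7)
The forward direction is straightforward. Assuming $cgc^{-1}=f$ for some $c\in\AAutT$, I would first use that $c$ is a homeomorphism of $\dT$, so by Proposition~\ref{prop:decomposition of dT} it preserves the dynamical classification: $c(\operatorname{St}(g))=\operatorname{St}(f)$ and $c(\overline{\Wan(g)})=\overline{\Wan(f)}$. Condition~2 is then immediate from $c(\supp g)=\supp f$. For condition~1, one checks that $cg_ec^{-1}$ is the identity on $\overline{\Wan(f)}$ (because $g_e$ is the identity on $\overline{\Wan(g)}$) and equals $f=f_e$ on $\operatorname{St}(f)$ (because $g$ and $g_e$ agree on $\operatorname{St}(g)$ and $cgc^{-1}=f$), so uniqueness of the EH decomposition forces $cg_ec^{-1}=f_e$, and symmetrically $cg_hc^{-1}=f_h$.

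For the backward direction, I would pick $a,b\in\AAutT$ with $ag_ea^{-1}=f_e$ and $bg_hb^{-1}=f_h$. As $b$ is a homeomorphism of $\dT$ sending the clopen set $\supp(g_h)=\overline{\Wan(g)}$ onto $\supp(f_h)=\overline{\Wan(f)}$, it also sends $\operatorname{St}(g)$ onto $\operatorname{St}(f)$. My plan is to construct $c$ piecewise along the clopen partition $\dT=\operatorname{St}(g)\sqcup\overline{\Wan(g)}$: set $c|_{\overline{\Wan(g)}}:=b|_{\overline{\Wan(g)}}$, and on $\operatorname{St}(g)$ take $c|_{\operatorname{St}(g)}$ to be a homeomorphism onto $\operatorname{St}(f)$ intertwining $g|_{\operatorname{St}(g)}=g_e|_{\operatorname{St}(g)}$ with $f|_{\operatorname{St}(f)}=f_e|_{\operatorname{St}(f)}$. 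Once such a $c$ is produced as an almost automorphism, the identities $cg_ec^{-1}=f_e$ and $cg_hc^{-1}=f_h$ follow by comparing both sides on the clopen partition $\dT=\operatorname{St}(f)\sqcup\overline{\Wan(f)}$, whence $cgc^{-1}=f$.

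The natural candidate $a|_{\operatorname{St}(g)}$ for the elliptic piece may fail to land in $\operatorname{St}(f)$, so I would replace $a$ by $a':=ha$ for a suitable $h$ in the centralizer of $f_e$ with $h(a(\operatorname{St}(g)))=\operatorname{St}(f)$. Since $b$ maps $\overline{\Wan(g)}$ to $\overline{\Wan(f)}$, Part~\ref{item:number of balls is preserved mod d-1} of Lemma~\ref{lem: translating balls} gives that these clopen sets have the same ball count modulo $d-1$, and hence so do the complements $\operatorname{St}(g)$ and $\operatorname{St}(f)$. The hard step will be producing $h$ inside the centralizer of $f_e$ via Part~\ref{item:can map U1 to U2 while fixing W} of Lemma~\ref{lem: translating balls}, and this is precisely where condition~2 enters. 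If $\supp g=\supp f=\dT$, there are no fixed points, forcing $\operatorname{St}(g)=\supp(g_e)$ and $\operatorname{St}(f)=\supp(f_e)$, so $a(\operatorname{St}(g))=\operatorname{St}(f)$ already and no modification is needed. If $\supp g,\supp f\neq\dT$ and both $g,f$ are neither purely elliptic nor purely hyperbolic (those degenerate cases being handled directly by $c=a$ and $c=b$), then $\operatorname{St}(g)$ and $\operatorname{St}(f)$ are proper clopen subsets of $\dT$ and the lemma applies. The mixed case is ruled out by condition~2. Finally, choosing sufficiently refined tree pair representatives of $a'$ and $b$ allows gluing the two pieces along the clopen partition into a well-defined almost automorphism $c$.
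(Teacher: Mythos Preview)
Your proposal is correct and follows essentially the same approach as the paper: both prove the forward direction by observing that conjugation respects the EH decomposition, and for the backward direction both glue a conjugator from $b$ on the hyperbolic support and a modified $a' = ha$ on the stable part, where $h$ is chosen via Lemma~\ref{lem: translating balls}(\ref{item:can map U1 to U2 while fixing W}) to fix $\supp(f_e)$ pointwise (hence centralize $f_e$) while aligning the partitions. The paper's three-piece partition $A\sqcup B\sqcup C$ is just your $\operatorname{St}\sqcup\overline{\Wan}$ with $\operatorname{St}$ split further into $\supp(f_e)\sqcup\Fix(f)$, and the case analysis you sketch matches the paper's.
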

	
	\begin{proof}
		The ``only if'' direction is obvious because $(a^{-1}ga)_e = a^{-1}g_e a$ and $(a^{-1}ga)_h = a^{-1}g_h a$.
		
		For the ``if" direction, let $a,b\in\AAutT$ be such that $f_e =a^{-1}g_e a$ and $f_h =b^{-1}g_h b$.
		Denote $A := \supp(f_e)$, $B := \supp(f_h)$, $C := \supp(f)^c$ and $A' := \supp(g_e)$, $B'=\supp(g_h)$, $C'=\supp(g)^c$. Note that by Proposition \ref{prop:decomposition of dT} we have that $A\sqcup B\sqcup C=A'\sqcup B'\sqcup C'=\dT$ are both disjoint unions, and the sets $A\sqcup C, A'\sqcup C', B$ and $B'$ are clopen sets. Furthermore, we have $a(A)=A'$ and $b(B)=B'$.
		
		We first construct an element $a'$ with $f_e = a'^{-1}g_e a'$ and $a'(B)=B'$. 
		Both $B$ and $B'$ can be assumed to be non-empty, as otherwise it would imply that $g=g_e$ and $f=f_e$ and so there is nothing to prove.
		We can also assume that both $C$ and $C'$ are non-empty, as otherwise $a(B)=a(A)^c=A'^c=B'$ and so we can take $a'=a$.
		Under these assumptions, $B,B'\subset \T$ are clopen non-empty proper subsets. By Lemma \ref{lem: translating balls}(\ref{item:number of balls is preserved mod d-1}) the sets $B$ and $a^{-1}(B')$ consist of the same number of disjoint balls $\mod{d-1}$, since $a^{-1}b(B)=a^{-1}(B')$.
		As $A\subset (B\cup a^{-1}(B'))^c$ is proper, we can find a clopen set $W$ such that $A\subset W\subseteq(B\cup a^{-1}(B'))^c$.
		By Lemma \ref{lem: translating balls}(\ref{item:can map U1 to U2 while fixing W}), there exists an element $c\in \AAutT$ such that $c|_W = id$ and $c(a^{-1}B') = B$.
		Note that $\supp(c)$ is disjoint from $\supp(f_e)\subset W$ and so $c$ and $f_e$ commute. Defining $a'=ac^{-1}$, we have that $f_e=a'^{-1}g_e a'$, and moreover $a'(B)=B'$.
		
		We now have that $a'(A)=A',a'(B)=B'$ and it follows that $a'(C)=C'$. Since by assumption also $b(B)=B'$, the following element of $\AAutT$ is well defined:
	$c'=
	\begin{cases}
	{
	\renewcommand{\arraystretch}{0.9}
	\begin{array}{cc}
	a' & A\cup C \\
		b & B 
	\end{array}.
	}
	\end{cases}
		$
		We claim that $c'^{-1}gc'=f$. 
		
		Indeed, let $\eta\in\dT$.
		If $\eta\in A$, then $c'(\eta)=a'(\eta)\in A'$ and so $c'^{-1}gc'(\eta)=a'^{-1}g_e a'(\eta)=f_e(\eta)=f(\eta)$. Next, suppose $\eta\in B$. In this case $c'(\eta)=b(\eta)\in B'$ and so $c'^{-1}gc'(\eta)=b^{-1}g_hb(\eta)=f_h(\eta)=f(\eta)$. Lastly, suppose $\eta\in C$. Then since $c'(\eta)=a'(\eta)\in C'$ we have $c'^{-1}gc'(\eta)=\eta=f(\eta)$.

		\end{proof}
		
		\begin{example}
		The example in Fig. \ref{fig:support condition necessary} shows that the second condition in Proposition \ref{prop:conj_can_be_checked_on_ell_and_hyp_seperately} is indeed necessary.
		 \begin{figure}
  \centering
    \subfloat[An element $f$ with full support]{\includegraphics[scale=1]{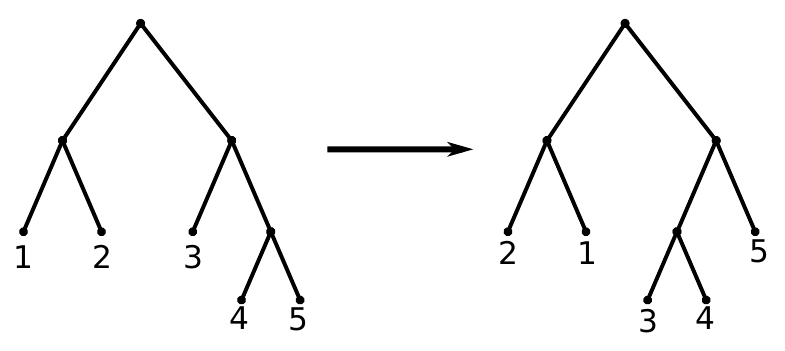}
    }
    
    \subfloat[An element $f$ with non-full support]{\includegraphics[scale=0.9]{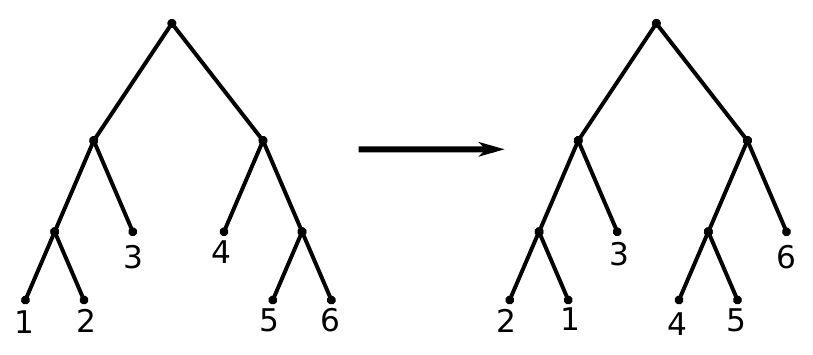}}
  \caption{We have $f_e=g_e$ and $f_h$ is conjugate to $g_h$, but $f$ and $g$ are not conjugate since $f$ has full support, but $g$ does not.}
  \label{fig:support condition necessary}
 \end{figure}
		\end{example}
	
       \begin{remark}
       There is nothing special about $\AAutT$, elliptic or hyperbolic elements in the proof for the preceding lemma. The only thing we use is that $\AAutT$ is a topological full group, that admits a unique decomposition of each element into two factors with disjoint clopen supports from disjoint conjugacy invariant sets.
       \end{remark}

\section{Elliptic elements} \label{sec: ell elements}

		In this section, $\T=\Tdk$ is again the tree such that the root has valency $k$ and all other vertices have valency $d+1$.
	
	Let $g$ be an elliptic element in $\AutT$. The dynamics of $g$ acting on $\T$ is described by a labeled graph, called the \textit{orbital type of $g$}. The orbital type is invariant under conjugation. In fact, conjugacy classes of elliptic elements in $\AutT$~ are classified by the orbital type: two elliptic elements are conjugate in $\AutT$~ if and only if they admit the same orbital type.
	
	In this section, we define the \textit{boundary orbital type} of an elliptic element in $\AAutT$.
	This will be an equivalence class of the orbital type of a forest isomorphism defining the elliptic element.
    Further, we show that two elliptic elements in $\AAutT$ are conjugate if and only if they admit the same boundary orbital type.
	
	Le Boudec and Wesolek give the following four characterisations of elliptic elements.
	
	\begin{lemma}[\cite{lbw19}, Proposition 3.5]\label{lem:ell=phiTT}
		Let $g \in \AAutTdk$. The following are equivalent.
		\begin{enumerate}
			\item There is a finite complete subtree $T$ of $\Tdk$ such that the tree pair $\tp{g}{T}{T}$ is associated to $g$.
			\item Some power of $g$ is a tree automorphism of $\Tdk$ fixing the root.
			\item The subgroup $\overline{\langle g \rangle} \leq \AAutTdk$ is compact.
			\item The element $g$ is not a translation, i.e. there do not exist a ball $B\subset \partial \Tdk$ and an integer $n \geq 1$ such that $g^n(B) \subsetneq B$.
		\end{enumerate}
	\end{lemma}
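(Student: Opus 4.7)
The plan is to establish the cyclic chain of implications $(1) \Rightarrow (2) \Rightarrow (3) \Rightarrow (4) \Rightarrow (1)$, with the revealing pair machinery from Subsection \ref{subsec: tree pairs} doing the decisive work in the final step.

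For $(1) \Rightarrow (2)$, I would begin with a forest isomorphism $\phi \colon \T \setminus T \to \T \setminus T$ representing $g$; since $\phi$ induces a permutation of the finite set $\LL T$, some power $\phi^n$ sends each component of $\T \setminus T$ to itself, and extending $\phi^n$ by the identity on $T$ yields a tree automorphism of $\Tdk$ fixing the root and representing $g^n$. For $(2) \Rightarrow (3)$, the stabilizer of the root in $\AutT$ is a profinite, hence compact, subgroup of $\AAutT$ (as an inverse limit of finite symmetric groups acting on the balls around the root); so $\overline{\langle g^n \rangle}$ sits inside a compact subgroup, and $\overline{\langle g \rangle}$, being a finite union of its cosets, is compact. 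For $(3) \Rightarrow (4)$, I would argue the contrapositive: if $g$ is a translation with $g^n(\dT_v) \subsetneq \dT_v$, then some representative $\phi$ of $g$ has $\phi^n(v)$ a strict descendant of $v$, producing an infinite strictly-descending orbit $v, \phi^n(v), \phi^{2n}(v), \dots$. Compactness of $\overline{\langle g \rangle}$ would make the stabilizer of $v$ open and of finite index, forcing a finite orbit, a contradiction.

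The main obstacle is $(4) \Rightarrow (1)$. Assuming $g$ is not a translation, I would take any associated tree pair and apply Lemma \ref{lem: constructing a revealing pair} to obtain a revealing pair $P = \tp{g}{T_1}{T_2}$. By Lemma \ref{lem:leaves of a revealing pair}, every attractor chain in $P$ yields a ball $\dT_v$ with $g^m(\dT_v) \subsetneq \dT_v$, so the revealing property of $P$ forces $T_2 \setminus T_1 = \emptyset$, i.e., $T_2 \subseteq T_1$. A leaf-counting argument then rules out $T_1 \setminus T_2$: the bijection $\overline{g} \colon \LL T_1 \to \LL T_2$ gives $|\LL T_1| = |\LL T_2|$, but each non-empty component of $T_1 \setminus T_2$ contributes only one root to $\LL T_2$ while contributing at least $d \geq 2$ leaves to $\LL T_1$. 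Therefore $T_1 = T_2 =: T$, and the tree pair $\tp{g}{T}{T}$ witnesses $(1)$. The subtlety worth flagging is that the definition of translation is asymmetric (only $\subsetneq$, not $\supsetneq$), so one cannot directly extract a nested ball for $g$ from a repeller chain; the counting argument is what bridges this asymmetry.
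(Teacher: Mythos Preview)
The paper does not supply its own proof of this lemma; it is quoted verbatim from Le~Boudec--Wesolek \cite{lbw19}, Proposition~3.5. Your argument is correct and, pleasantly, is entirely self-contained within the paper's own machinery. The implication $(4)\Rightarrow(1)$ via Lemma~\ref{lem: constructing a revealing pair} is clean: the non-translation hypothesis kills attractor components directly (via $g^n(\dT_{x_0})=\dT_{x_n}\subsetneq\dT_{x_0}$ for an attractor chain), and your leaf-count $|\LL T_1|=|\LL T_2|$ combined with $T_2\subseteq T_1$ is exactly the right device to dispose of repeller components without appealing again to the asymmetric hypothesis in~(4). You do not even need the full force of Lemma~\ref{lem:leaves of a revealing pair} here; the strict inclusion for an attractor chain is immediate from Definition~\ref{def:maximal_chains}.

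One small wording fix in $(3)\Rightarrow(4)$: almost automorphisms do not act on vertices of $\T$, so speaking of ``the stabilizer of $v$'' and ``the orbit of $v$'' is not quite well-posed. Replace $v$ by the ball $\dT_v$: the setwise stabilizer of $\dT_v$ in $\AAutT$ is open (it contains $\Fix(T)$ for any finite complete $T$ with $v\in\LL T$), hence has finite index in the compact group $\overline{\langle g\rangle}$, forcing the orbit of $\dT_v$ to be finite --- contradicting the infinite strictly nested family $\dT_v\supsetneq\dT_{\psi(v)}\supsetneq\dT_{\psi^2(v)}\supsetneq\cdots$ you produced.
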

	
	\subsection{Orbital type}
	
	In this subsection we extend the classical orbital type of elliptic tree automorphisms to elliptic forest automorphisms.
	
	\begin{definition}
		Let $\F$ be a forest. A \emph{labeling of $\F$} is a map $l \colon \Vertices (\F)\to \mathbb{N}_{>0}$ defined on the vertices of $\F$. The pair $(\F,l)$ is called a \emph{labeled forest}.
		
		A forest isomorphism $f \colon \F_1\to \F_2$ between two labeled forests $(\F_1,l_1)$ and $(\F_2,l_2)$ is called an \emph{isomorphism of labeled forests} if $l_2(f(v))=l_1(v)$ for every $v\in \Vertices(\F_1)$.
	\end{definition}
	
	We often just write $(\F_1,l_1)=(\F_2,l_2)$ when we mean isomorphic as labeled forests.
		
	\begin{definition}[Orbital type]\label{def:OT}
		Let $T\subset\Tdk$ be a finite complete subtree and let $\varphi$ be an automorphism of the forest $\F:=\Tdk\setminus T$. Then, the \textit{orbital type of $\varphi$} is the labeled forest $\OT(\varphi) := (\overline{\F},l)$, where $\overline{\F}:=\overline{\langle\varphi\rangle} \setminus \F$ is the quotient graph,
		and the labeling map $l\colon \Vertices(\overline{\F})\to \mathbb{N}$ is defined by sending each equivalence class $[v] \in \Vertices(\overline{\F})$ with $v \in \Vertices(\F)$ to its cardinality $l([v]):=|\{\phi^n(v) \mid n \in \mathbb{Z}\}|$.
	\end{definition}
	
	An example is drawn in Fig. \ref{fig:OT}.
	
 \begin{figure}
  \centering
    \subfloat[a forest automorphism; the left two trees form a forest, and the arrow maps it to the forest consisting of the right two trees]{\includegraphics[width=\textwidth]{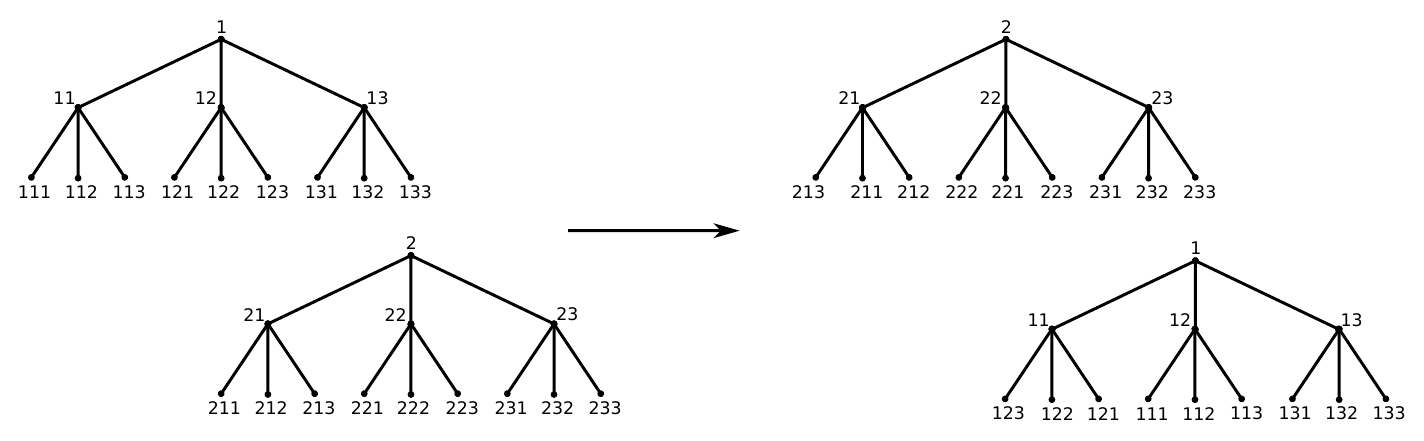}
    }
    
    \subfloat[its orbital type]{\includegraphics[scale=1.2]{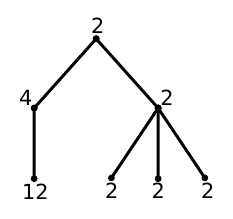}}
  \caption{A forest isomorphism and its orbital type.}
  \label{fig:OT}
 \end{figure}
	
	In case $\F=\T$ is a level homogeneous tree and $\varphi \in \AutT$, Definition \ref{def:OT} coincides with the definition of orbital type  given by Gawron, Nekrashevych and Sushchansky~\cite{gns01}. They give the following complete characterisation when two elliptic tree automorphisms are conjugate.
	
	\begin{theorem}[\cite{gns01}, Theorem 3.1 and Theorem 5.1]\label{OT preserved by conj in AutT}
		Let $\F=\T$ be a level homogeneous tree, and let $\varphi,\varphi'\in\AutT$ be two elliptic elements. Then $\varphi$ and $\varphi'$ are conjugate in $\AutT$ if and only if $\OT(\varphi)$ and $\OT(\varphi')$ are isomorphic as labeled trees.
	\end{theorem}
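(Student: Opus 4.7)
The forward direction is the easy one. If $\varphi' = a^{-1} \varphi a$ for some $a \in \AutT$, then $a$ maps $\varphi$-orbits of vertices bijectively to $\varphi'$-orbits, preserving cardinalities, and it also maps children to children. Hence it descends to a graph isomorphism $\overline{\T}_\varphi \to \overline{\T}_{\varphi'}$ preserving the labels, i.e.\ an isomorphism of labeled trees $\OT(\varphi) \cong \OT(\varphi')$.

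For the nontrivial direction, suppose $F\colon \OT(\varphi) \to \OT(\varphi')$ is an isomorphism of labeled trees. The idea is to lift $F$ level by level to a tree automorphism $a \in \AutT$ satisfying $a \varphi = \varphi' a$. Observe first that whenever $\bar{v}$ is a child of $\bar{u}$ in $\overline{\T}_\varphi$, the label $l(\bar{u})$ divides $l(\bar{v})$: the vertex $\varphi^{l(\bar u)}$ fixes any representative $u$ of $\bar u$ and permutes its children, so every $\varphi$-orbit of a child has length a multiple of $l(\bar u)$. The same holds on the $\varphi'$-side, and since $F$ preserves labels, the corresponding divisibilities match. Using this, I would inductively construct a section $\sigma\colon \overline{\T}_\varphi \to \T$ of the quotient map which is a tree embedding, i.e.\ $\sigma(\bar v)$ is a child of $\sigma(\bar u)$ whenever $\bar v$ is a child of $\bar u$; such a choice exists because every orbit of children of $u$ under $\varphi^{l(\bar u)}$ contains at least one vertex one may pick. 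Pick a similar section $\sigma'$ on the $\varphi'$-side.

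Now define $a$ on each $\varphi$-orbit $\bar{v}$ by
\begin{equation*}
a\bigl(\varphi^i(\sigma(\bar v))\bigr) \;:=\; \varphi'^{\,i}\bigl(\sigma'(F(\bar v))\bigr), \qquad i \in \mathbb{Z}.
\end{equation*}
This is well-defined because $l(F(\bar v)) = l(\bar v)$, and it is manifestly $\varphi$-to-$\varphi'$ equivariant. Bijectivity is immediate since both sections yield fundamental domains and $F$ is a bijection of orbits. The content is to verify that $a$ preserves the edge relation. Given $u = \varphi^i(\sigma(\bar u))$ and a child $v = \varphi^{i+jl(\bar u)}(\sigma(\bar v))$ of $u$ (every child of $u$ has this form since $\sigma(\bar v)$ was chosen to be a child of $\sigma(\bar u)$ and the remaining children in the orbit are swept out by $\varphi^{l(\bar u)}$), one computes
\begin{equation*}
a(v) \;=\; \varphi'^{\,i+jl(\bar u)}\bigl(\sigma'(F(\bar v))\bigr) \;=\; \varphi'^{\,i}\bigl(\varphi'^{\,jl(\bar u)}(\sigma'(F(\bar v)))\bigr),
\end{equation*}
which is a child of $\varphi'^{\,i}(\sigma'(F(\bar u))) = a(u)$ because $\varphi'^{\,jl(\bar u)}$ fixes $\sigma'(F(\bar u))$ (its orbit has length $l(F(\bar u)) = l(\bar u)$) and $\sigma'(F(\bar v))$ is a child of $\sigma'(F(\bar u))$ by construction.

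The main obstacle I anticipate is the compatibility of the two sections with the equivariance relation: one has to be careful that the section $\sigma$ is a tree embedding, because otherwise $a$ might fail to respect the edge relation globally despite being equivariant. The divisibility $l(\bar u) \mid l(\bar v)$ is exactly what makes such a section exist, and the level-homogeneity of $\T$ ensures that matching orbit data at each level determines a legitimate bijection between children of $u$ and children of $a(u)$. All other verifications (bijectivity, edge-preservation, equivariance) then follow from direct computation along the inductive construction.
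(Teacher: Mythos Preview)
The paper does not give its own proof of this theorem; it is quoted from \cite{gns01} (Theorems~3.1 and~5.1 there) and used as a black box. So there is nothing in the present paper to compare against.

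That said, your argument is correct and is essentially the approach of \cite{gns01}: choose for each orbit a representative so that the section $\sigma\colon \overline{\T}_\varphi \to \T$ is a rooted-tree embedding (which is possible precisely because $l(\bar u)\mid l(\bar v)$ for every edge $\bar u\to\bar v$), do the same on the $\varphi'$-side, and transport one section to the other via $F$ and then extend equivariantly. Your verification that every child of $u=\varphi^i(\sigma(\bar u))$ has the form $\varphi^{i+jl(\bar u)}(\sigma(\bar v))$ is the key computation, and it is right. One small point you leave implicit: the construction starts at the root, which is fixed by any elliptic $\varphi\in\AutT$ for a level-homogeneous rooted tree, so the induction has a base; and bijectivity of $a$ on the children of each vertex follows from level-homogeneity together with Item~\ref{item:sum of labels of children} of Remark~\ref{rem:properties_of_labels_of_OT}. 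With those remarks your proof is complete.
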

	
	To make use of this theorem when talking about almost automorphisms, we now describe a way how to get from elliptic almost automorphisms to elliptic automorphisms of a perhaps different tree.
	Let $T \subset \T$ be a finite complete subtree and $m := |\LL T|$.
	Let $p_T \colon \T \to \T_{d,m}$ be the map that contracts all the inner vertices of $T$ and the edges connecting them to a point.
	Then the restriction $p_T\restriction_{\T \setminus T} \colon \T \setminus T \to \T_{d,m} \setminus p_T(T)$ is a forest isomorphism. For an almost automorphism $\foris{\phi}{T}{T}$ define $i_T(\phi) := p_T \circ \phi \circ p_T^{-1}$.
	Note that $p_T(T)$ is the $1$-ball around the root of $\T_{d,m}$, so in fact $i_T(\phi) \in \Aut(\T_{d,m})$. Clearly the map $i_T \colon \Aut(\T \setminus T) \to \Aut(\Tdm)$ is an isomorphism.
	The following lemma says that $\OT(\phi)=\OT(i_T(\phi))$, where in this equation $i_T(\phi)$ is again viewed as a forest automorphism of $\Tdm \setminus p_T(T)$. We omit its proof as it is an easy exercise.
	
	\begin{lemma}\label{lem:OT preserved by rfi}
		Let $\F$ be a forest and let $\varphi\colon \F\to \F$ be an automorphism of $\F$.
		Suppose that $i\colon \F\to \F'$ is a forest isomorphism. Then $\OT(\phi)=\OT(i \circ \phi \circ i^{-1})$.
	\end{lemma}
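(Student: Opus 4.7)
The plan is to show directly that the forest isomorphism $i$ descends to an isomorphism between the quotient forests $\overline{\langle\varphi\rangle} \setminus \F$ and $\overline{\langle\psi\rangle} \setminus \F'$, where $\psi := i \circ \varphi \circ i^{-1}$, and that this descended isomorphism preserves the labels. Since the orbital type is defined purely in terms of orbits and their cardinalities, this will suffice.

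First I would observe that because $i$ is a forest isomorphism and $\varphi$ is a forest automorphism, $\psi = i \circ \varphi \circ i^{-1}$ is a forest automorphism of $\F'$. Next, the key computation: for every vertex $v \in \Vertices(\F)$ and every $n \in \mathbb{Z}$, we have $\psi^n(i(v)) = i(\varphi^n(v))$, so
\[
\{\psi^n(i(v)) \mid n \in \mathbb{Z}\} \;=\; i\bigl(\{\varphi^n(v) \mid n \in \mathbb{Z}\}\bigr).
\]
Since $i$ is a bijection on vertices, this identity does two things at once: it shows that $i$ sends $\varphi$-orbits bijectively to $\psi$-orbits, and it shows that corresponding orbits have the same cardinality.

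From this, define $\bar{i} \colon \overline{\langle\varphi\rangle} \setminus \F \to \overline{\langle\psi\rangle} \setminus \F'$ on vertices by $\bar{i}([v]) := [i(v)]$. The computation above makes this well-defined and injective, and surjectivity follows because every vertex of $\F'$ has the form $i(v)$ for some $v \in \F$. Since $i$ is a forest isomorphism, edges between orbits are preserved, so $\bar{i}$ is a forest isomorphism of the quotient forests. Finally, for each vertex $[v]$ in the quotient, $l([v]) = |\{\varphi^n(v) \mid n \in \mathbb{Z}\}| = |\{\psi^n(i(v)) \mid n \in \mathbb{Z}\}| = l(\bar{i}([v]))$, so $\bar{i}$ is an isomorphism of labeled forests. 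This gives $\OT(\varphi) = \OT(\psi)$, as required.

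There is no real obstacle here — the statement amounts to the functoriality of taking the quotient by a cyclic group action under conjugation, and the bookkeeping is entirely straightforward, which is presumably why the authors leave it as an exercise. The only thing to be a little careful about is that the quotient is by $\overline{\langle\varphi\rangle}$ rather than $\langle\varphi\rangle$, but since orbits of a single automorphism acting on a discrete set coincide with orbits of the closure of the cyclic group it generates (on vertex level, the closure acts through its quotient by a stabilizer that fixes the given vertex), this causes no issue.
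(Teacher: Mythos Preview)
Your proof is correct and is precisely the kind of argument the authors have in mind; the paper in fact omits the proof entirely, calling it ``an easy exercise,'' so there is nothing to compare against beyond noting that your bookkeeping is the natural one. Your closing remark about $\overline{\langle\varphi\rangle}$ versus $\langle\varphi\rangle$ is also appropriate: in the setting of the paper each vertex orbit is finite (the forest is $\T\setminus T$ with finitely many components, and $\varphi$ permutes vertices level by level), so the two quotients coincide on vertices and the point is moot.
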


We now determine which labeled forests may be obtained as orbital types of elliptic almost automorphisms.
Recall that a \emph{rooted forest} is a forest where each connected component is a rooted tree. Note that if $T \subset \T$ is a finite complete subtree, then $\T \setminus T$ has a natural structure as a rooted forest, namely by taking $\LL T$ as the set of roots.

\begin{remark}\label{rem:properties_of_labels_of_OT}
Let $T$ be a complete finite subtree of $\T=\Tdk$, and let $\F := \T \setminus T$.
For any $\phi \in \Aut(\F)$ the labeled forest $\OT(\phi)$ satisfies the following:
\begin{enumerate}
    \item $\sum_{v \text{ root of }\F} l(v) = k + n'(d-1)$ for some $n' \geq 0$; \label{item:sum of labels of roots}
    \item $l(v)$ divides $l(u)$ for all vertices $v,u$ such that $u$ is a descendant of $v$; and \label{item:l(v) divides l(u)}
    \item $\sum_{\text{$u$ child of $v$}} l(u)=d \cdot l(v)$ for every vertex $v$ of $\OT(\phi)$.
    \label{item:sum of labels of children}
\end{enumerate}
\end{remark}

In the other direction, we have the following.

\begin{lemma}\label{lemma:dk-type iff OT of AAutTdk}
Let $(\F,l)$ be a rooted labeled forest satisfying Items \ref{item:sum of labels of roots}, \ref{item:l(v) divides l(u)} and \ref{item:sum of labels of children} of the previous remark.
Then for every complete finite subtree $T$ of $\T=\T_{d,k}$ with $|\LL T|=\sum_{v \text{ root of }\F} l(v)$ there exists a forest automorphism $\foris{\phi}{T}{T}$ with $\OT(\phi)=(\F,l)$.

\end{lemma}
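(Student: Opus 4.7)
The plan is to build a concrete model $\F'$ of the desired orbit structure, equip it with a forest automorphism $\psi$ that manifestly has orbital type $(\F,l)$, and then transport $\psi$ to $\T\setminus T$ via a rooted forest isomorphism. The ingredient that makes the model work is that the three numerical conditions on $(\F,l)$ are precisely what is needed to turn $\F$ into the quotient of a $d$-regular rooted forest by a generator of a cyclic group whose orbit sizes are prescribed by $l$.

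Concretely, define $\F'$ as follows. For each vertex $v$ of $\F$, introduce a fiber $f_v := \mathbb{Z}/l(v)\mathbb{Z}$, and take the vertex set of $\F'$ to be the disjoint union $\bigsqcup_{v \in \F} f_v$. Whenever $u$ is the parent of $v$ in $\F$, condition \ref{item:l(v) divides l(u)} gives $l(u) \mid l(v)$, so the reduction map $p_{v,u}\colon \mathbb{Z}/l(v)\mathbb{Z} \to \mathbb{Z}/l(u)\mathbb{Z}$ is well-defined; declare $y \in f_u$ to be the parent of $x \in f_v$ in $\F'$ iff $p_{v,u}(x)=y$. By condition \ref{item:sum of labels of children}, the number of children of any $y \in f_u$ equals $\sum_{v \text{ child of } u \text{ in } \F} l(v)/l(u) = d$, so every vertex of $\F'$ has exactly $d$ children. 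The roots of $\F'$ form the disjoint union $\bigsqcup_{v \text{ root of } \F} f_v$, which has cardinality $\sum_{v \text{ root of } \F} l(v) = |\LL T|$ by condition \ref{item:sum of labels of roots}. Now define $\psi \in \Aut(\F')$ to act as $x \mapsto x+1$ on each fiber $f_v$. The identity $p_{v,u}(x+1) = p_{v,u}(x)+1$ shows that $\psi$ preserves the parent-child relation, so $\psi$ is indeed a forest automorphism, and its orbits are exactly the fibers $f_v$.

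To finish, observe that any two rooted forests in which every vertex has exactly $d$ children are isomorphic as rooted forests once we know they have the same number of roots; this is proved by an obvious level-by-level bijection. Hence there exists a rooted forest isomorphism $\iota \colon \F' \to \T \setminus T$. Setting $\phi := \iota \circ \psi \circ \iota^{-1}$ gives a forest automorphism of $\T \setminus T$, and by Lemma \ref{lem:OT preserved by rfi} we have $\OT(\phi) = \OT(\psi)$. Finally, by the construction of $\F'$ each orbit of $\psi$ is a fiber $f_v$ of size $l(v)$, and the quotient graph $\langle \psi \rangle \setminus \F'$ together with its orbit-size labeling is tautologically $(\F, l)$. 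Hence $\OT(\phi) = (\F, l)$, as required.

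The only place any real work is hidden is in checking that the shift action $\psi$ is compatible with the assembled forest structure, and this is exactly where the divisibility condition $l(u) \mid l(v)$ is invoked; the remaining content of the lemma is bookkeeping.
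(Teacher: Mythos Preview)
Your proof is correct. The key difference from the paper is that the paper outsources the existence step: it cites the proof of Theorem~3.1 in \cite{gns01} to obtain an automorphism $g \in \Aut(\T_{d,m})$ (with $m=|\LL T|$) whose orbital type, after deleting the radius-$1$ ball around the root, equals $(\F,l)$, and then transports $g$ to $\T\setminus T$ via the isomorphism $i_T$. You instead give a self-contained construction of the model forest $\F'=\bigsqcup_v \mathbb{Z}/l(v)\mathbb{Z}$ with the shift $\psi$, verifying directly that conditions~\ref{item:l(v) divides l(u)} and~\ref{item:sum of labels of children} make $\F'$ a $d$-regular rooted forest on which $\psi$ acts with the prescribed orbit structure, and then transport via any rooted forest isomorphism $\iota\colon \F'\to\T\setminus T$. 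Both proofs conclude with the same appeal to Lemma~\ref{lem:OT preserved by rfi}. Your approach has the advantage of being fully self-contained and of making transparent exactly where each of the three numerical conditions enters; the paper's approach is shorter on the page but relies on the reader unpacking the GNS construction.
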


\begin{proof}
By Item \ref{item:sum of labels of roots} there exists a complete finite subtree $T$ of $\Tdk$ with $|\LL T|=m=\sum_{v \text{ root of }\F} l(v)$.
By the proof of Theorem 3.1 in \cite{gns01} there exists a $g \in \Aut(\Tdm)$ 
such that $(\F,l)=\OT(g)\setminus B$, where $B$ is the ball of radius $1$ around the root.
Recall that the map $i_T \colon \Aut(\T \setminus T) \to \Aut(\Tdm \setminus p_T(T))$ from above induces an orbital type preserving isomorphism, so $\phi=i_T^{-1}(g)$ does the job.
\end{proof}

\subsection{Boundary orbital type and conjugacy}

Now we define an ``almost''-version of the orbital type of a forest automorphism and show that it completely determines the conjugacy class of the corresponding elliptic almost automorphism.
A subforest $F$ of a forest $\F$ is called \emph{complete} if it is a union of complete trees. Our forest $\F$ will always be rooted and unless explicitly stated otherwise we assume that these complete trees are empty or contain a root of $\F$.
	
\begin{definition}
	Let $(\F_1,l_1)$ and $(\F_2,l_2)$ be two labeled forests as in Remark~\ref{rem:properties_of_labels_of_OT}. We call them \emph{boundary equivalent} if there exist finite complete subforests $F_i\subset \F_i$, $i=1,2$ such that $\F_1\setminus F_1$ and $\F_2 \setminus F_2$, equipped with the restrictions of $l_1,l_2$, are isomorphic as labeled forests.
	Let $\varphi$ be an automorphism of a forest $\F$. The equivalence class of the labeled forest $\OT(\varphi)$ is called \emph{the boundary orbital type of $\varphi$}, and is denoted by $\BOT(\varphi)$.
\end{definition}

We ignore the subtlety that, strictly speaking, these "equivalence classes" are not sets, like the class of all trees is too big to be a set.
	
	Let $\T$ be a tree and let $g\in\AAutT$ be an elliptic element. If $\foris{\phi}{T}{T}$ and $\foris{\phi}{T'}{T'}$ are two forest automorphisms representing $g$, then both $\varphi,\varphi'$ are defined on $\T\setminus (T\cup T')$ and equal there, and so $\BOT(\varphi)=\BOT(\varphi')$. It follows that the following is well-defined.
	
	\begin{definition}[Boundary orbital type]\label{def:Boundary orbital type}
		 The \emph{boundary orbital type} of an elliptic tree almost automorphism $g$, denoted $\BOT(g)$, is defined to be the boundary orbital type of one (and therefore all) of its representatives.
	\end{definition}
	
	We show that the boundary orbital type fully characterizes conjugacy of elliptic elements.
    First we show the perhaps surprising fact that the orbital type of a forest automorphism contains information about the number of trees in the forest.
	\begin{lemma}\label{cor:OT gives number of leaves}
		Suppose $\foris{\phi}{T}{T}$ and $\foris{\phi'}{T'}{T'}$ are forest isomorphisms with	 the same orbital type. Then $|\LL T |=|\LL T'|$.
	\end{lemma}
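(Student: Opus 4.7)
The plan is to exploit the rooted-forest structure inherited by the orbital type. Since $\varphi \colon \T \setminus T \to \T \setminus T$ is a forest isomorphism, it preserves the root/non-root distinction; that is, the roots of $\F := \T \setminus T$ (which are exactly the leaves of $T$) are permuted among themselves by $\varphi$. Consequently every $\varphi$-orbit is either entirely contained in the set of roots of $\F$ or disjoint from it, and the quotient $\overline{\F} = \overline{\langle \varphi \rangle} \setminus \F$ inherits a rooted forest structure whose roots are precisely the $\varphi$-orbits of roots of $\F$.

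The key observation is that summing the labels over the roots of $\overline{\F}$ recovers the number of roots of $\F$: each root $[v]$ of $\overline{\F}$ contributes $l([v]) = |\{\varphi^n(v) : n \in \mathbb{Z}\}|$, and since these orbits partition the set of roots of $\F$, their sizes sum to $|\LL T|$. Thus
\[
|\LL T| \;=\; \sum_{[v] \text{ root of } \overline{\F}} l([v]).
\]
The right-hand side is manifestly an isomorphism invariant of $\OT(\varphi)$ as a labeled rooted forest. Carrying out the same computation for $\varphi'$ gives $|\LL T'|$ as the corresponding sum over the roots of $\OT(\varphi')$, and since $\OT(\varphi) = \OT(\varphi')$ the two sums agree, yielding $|\LL T| = |\LL T'|$.

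The only point that needs a moment of care is to verify that an isomorphism of labeled forests really does identify roots with roots; but this is automatic since a vertex of a directed forest is a root iff it has no incoming edge, a property preserved by any graph isomorphism. So the rooted structure is encoded in the underlying graph data of $\OT(\varphi)$, and no extra assumption on the isomorphism is required. I expect no serious obstacle: the statement is essentially Item~\ref{item:sum of labels of roots} of Remark~\ref{rem:properties_of_labels_of_OT} read backwards, once one notices that the sum over roots of the quotient can be read off purely from the labeled forest.
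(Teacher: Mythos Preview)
Your proof is correct and follows essentially the same idea as the paper: both compute $|\LL T|$ as the sum of labels over the roots of the quotient forest $\overline{\F}$, and observe that this quantity is an invariant of the labeled forest $\OT(\phi)$. The only cosmetic difference is how the roots are identified: you use the directed graph structure (a root has no incoming edge), whereas the paper characterizes the root label as the minimum label in each connected component and writes the invariant as $\sum_C \min\{l([v]) : [v] \in C\}$.
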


\begin{proof}
	Let $\foris{\phi}{T}{T}$ be an automorphism of the forest $\F=\TwoT$ and consider the labeled graph of orbits $\OT(\phi)$. For a vertex $v\in \F$ denote by $[v]$ its image in $\OT(\phi)$. Every root of $\F$ (namely, every leaf of $T$) is mapped to a vertex in $\OT(\phi)$ whose label is minimal in its connected component. Moreover, if $r$ is a root of $\F$ then there are exactly $l([r])$ roots of $\F$ that are mapped to $[r]$. It follows that $|\LL T |=\sum_{C} \min\{l([v])|[v]\in C\}$, where the sum runs over all connected components of $\OT(\phi)$. 
\end{proof}
		
	\begin{theorem}\label{thm:conjugacy_elliptic}
		Let $\T=\Tdk$. Let $g,g'\in\AAutT$ be two elliptic elements, with boundary orbital types $\BOT(g)$ and $\BOT(g')$. Then $g,g'$ are conjugate in $\AAutT$ if and only if $\BOT(g)=\BOT(g')$.
	\end{theorem}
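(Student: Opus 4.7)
The plan is to reduce both directions to the Gawron--Nekrashevych--Sushchansky theorem (Theorem \ref{OT preserved by conj in AutT}) applied directly to $\T = \T_{d,k}$, using Lemma \ref{lem:ell=phiTT} to pass between elliptic almost automorphisms and forest or tree automorphisms. For the ``only if'' direction, suppose $a g a^{-1} = g'$ and fix forest automorphism representatives $\foris{\phi}{T}{T}$ and $\foris{\phi'}{T'}{T'}$ of $g$ and $g'$, together with a forest isomorphism representative $\foris{\alpha}{S_1}{S_2}$ of $a$. Since the elliptic $\phi$ has finite orbits, finitely many enlargements produce a $\phi$-invariant finite complete subtree $T^+ \supset T \cup S_1$ and a $\phi'$-invariant $T^{++} \supset T' \cup S_2$ for which $\alpha$ restricts to a forest isomorphism $\T \setminus T^+ \to \T \setminus T^{++}$ conjugating $\phi|_{\T \setminus T^+}$ to $\phi'|_{\T \setminus T^{++}}$. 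By Lemma \ref{lem:OT preserved by rfi} these restrictions have equal orbital types, and since each of these is obtained from $\OT(\phi)$ (respectively $\OT(\phi')$) by removing the finite complete subforest of $\phi$-orbits (respectively $\phi'$-orbits) of vertices of $T^+ \setminus T$ (respectively $T^{++} \setminus T'$), we conclude $\BOT(g) = \BOT(g')$.

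For the converse, assume $\BOT(g) = \BOT(g')$ and fix representatives $\phi, \phi'$ as before. By definition there exist finite complete subforests $\bar F_1 \subset \OT(\phi)$ and $\bar F_2 \subset \OT(\phi')$ with $\OT(\phi) \setminus \bar F_1 \cong \OT(\phi') \setminus \bar F_2$ as labeled forests. Their preimages under the respective quotient maps are $\phi$- and $\phi'$-invariant finite complete subforests of $\T \setminus T$ and $\T \setminus T'$, which joined to $T$ and $T'$ yield finite complete subtrees $T^+, T^{++} \subset \T$ satisfying $\OT(\phi|_{\T \setminus T^+}) = \OT(\phi'|_{\T \setminus T^{++}})$ as labeled forests. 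By Lemma \ref{cor:OT gives number of leaves} we have $|\LL T^+| = |\LL T^{++}| =: m$, so $\T \setminus T^+$ and $\T \setminus T^{++}$---each consisting of $m$ disjoint rooted $d$-ary trees---admit a forest isomorphism $\iota$, which we view as an almost automorphism $\hat\iota \in \AAutT$. Then $\hat\iota g \hat\iota^{-1}$ is represented by $\iota \circ \phi|_{\T \setminus T^+} \circ \iota^{-1}$ on $\T \setminus T^{++}$. Extending this and $\phi'|_{\T \setminus T^{++}}$ by the identity on $T^{++}$ produces elliptic tree automorphisms $\Phi, \Phi' \in \Aut(\T)$ that realize $\hat\iota g \hat\iota^{-1}$ and $g'$ respectively, and whose orbital types on $\T$ agree (the $T^{++}$-part contributes the same labeled subtree with all labels $1$, while Lemma \ref{lem:OT preserved by rfi} matches the complements). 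Theorem \ref{OT preserved by conj in AutT} applied to $\T = \T_{d,k}$ yields $\tilde a \in \Aut(\T)$ with $\tilde a \Phi \tilde a^{-1} = \Phi'$, and then $a := \tilde a \hat\iota \in \AAutT$ conjugates $g$ to $g'$.

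The main technical hurdle is making precise the correspondence between finite complete subforests of the quotient $\OT(\phi)$ and $\phi$-invariant finite complete subforests of $\T \setminus T$, and ensuring at each enlargement step that a restriction $\phi|_{\T \setminus T^+}$ is genuinely a forest automorphism (so that its orbital type makes sense) and that $\OT(\phi|_{\T \setminus T^+})$ really is obtained from $\OT(\phi)$ by excising a finite complete subforest of the quotient. These verifications use that the labels of $\OT(\phi)$ record the exact, finite orbit sizes of $\phi$, so preimages of finite subforests of $\OT(\phi)$ are themselves finite and $\phi$-invariant, and that saturating any finite subforest of $\T \setminus T$ under $\phi$ remains a finite complete subforest.
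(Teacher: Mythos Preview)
Your overall strategy matches the paper's: the ``only if'' direction transports orbital types along a conjugating forest isomorphism via Lemma~\ref{lem:OT preserved by rfi}, and the ``if'' direction aligns the underlying trees and then invokes Gawron--Nekrashevych--Sushchansky. There is, however, a real gap in your ``if'' direction.

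You assert that the extensions $\Phi,\Phi' \in \Aut(\T_{d,k})$ (identity on $T^{++}$, equal to the respective forest automorphisms outside) have isomorphic orbital types as labeled \emph{trees}, on the grounds that the $T^{++}$-part contributes the same all-label-$1$ subtree and the complements are isomorphic labeled forests. That is not sufficient. The labeled tree $\OT(\Phi)$ records \emph{which} component of the quotient forest is attached below \emph{which} leaf of $T^{++}$, and a labeled-tree isomorphism must match these attachments up to an automorphism of $T^{++}$. Since in general not every permutation of $\LL T^{++}$ extends to an automorphism of $T^{++}$, equality of the two multisets of components does not give $\OT(\Phi)\cong\OT(\Phi')$. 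The fix is easy and local to your argument: choose $\iota$ so that the given labeled-forest isomorphism $\OT(\phi|_{\T\setminus T^+})\cong\OT(\phi'|_{\T\setminus T^{++}})$ becomes, after transport by $\iota$, a leaf-by-leaf match; then $\OT(\Phi|_{\T_x})\cong\OT(\Phi'|_{\T_x})$ for every $x\in\LL T^{++}$, and the identity on $T^{++}$ extends to the desired labeled-tree isomorphism.

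The paper avoids this bookkeeping by contracting $T$ to a point via $p_T$ and applying Theorem~\ref{OT preserved by conj in AutT} inside $\Aut(\T_{d,m})$ with $m=|\LL T|$: there the common tree is the $1$-ball around the root, and \emph{every} permutation of its $m$ leaves is a tree automorphism, so a labeled-forest isomorphism upgrades for free to a labeled-tree isomorphism. Your route of staying in $\Aut(\T_{d,k})$ is more direct and perfectly valid once $\iota$ is chosen as above; the paper's detour through $\T_{d,m}$ simply trades that choice for an appeal to the symmetry of the $1$-ball.
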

	
	\begin{proof}
		For the "only if"-direction, suppose $g'=aga^{-1}$ for some $a\in \AAutT$.
		Let $\foris{\psi}{T}{T}$ and $\foris{\phi}{T_1}{T_2}$ be forest isomorphisms representing $g$ and $a$. Without loss of generality we can assume that $T_1=T$. By Lemma \ref{lem:OT preserved by rfi}, $\OT(\psi)=\OT(\varphi\psi\varphi^{-1})$ and so in particular,  $\BOT(g)=\BOT(aga^{-1})$.
		
		Now we show the "if"-direction. Suppose $\BOT(g)=\BOT(g')$.

		\emph{Step 1:}  There exist finite complete trees $T$, $T'$ of $\T$ and forest automorphisms $\foris{\psi}{T}{T}$, $\foris{\psi'}{T'}{T'}$ representing $g$ and $g'$, such that $\OT(\psi)=\OT(\psi')$.
			
				Indeed, let $\foris{\psi_0}{T_0}{T_0}$, $\foris{\psi_0'}{T'_0}{T'_0}$ be any forest automorphisms representing $g$ and $g'$.
				Since $\BOT(g)=\BOT(g')$, there exist finite complete subforests $\bar{D}\subset\OT(\psi_0)$ and $\bar{D'}\subset\OT(\psi'_0)$ such that $\OT(\psi_0)\setminus\bar{D}$ and $\OT(\psi'_0)\setminus\bar{D'}$ are isomorphic as labeled graphs.
				Note that $\bar{D}$ (respectively, $\bar{D'}$) is a union of complete finite trees, and so its preimage $D\subset\TwoT$ (resp. $D'\subset\TwoT'$) is a union of complete finite trees, with roots in $\LL T $ (resp. $\LL T'$).
				In particular, $T\cup D$ (resp. $T'\cup D'$) is a complete finite subtree of $\T$.
				Let $\psi$ denote the restriction of $\psi_0$ to the forest $\T\setminus (T\cup D)$, and similarly $\psi'$ the restriction of $\psi_0'$ to $\T\setminus (T'\cup D')$. Then indeed $\psi$ and $\psi'$ represent $g$ and $g'$, and $\OT(\psi)=\OT(\psi')$.
		
		\emph{Step 2:} Up to replacing $g'$ by a conjugate, we can assume $T=T'$.
			
				By the previous step $\OT(\psi)=\OT(\psi')$. Lemma \ref{cor:OT gives number of leaves} implies that $T$ and $T'$ have the same number of leaves and therefore there exists a forest isomorphism, $\foris{\chi}{T'}{T}$. Then $\foris{\chi\psi' \chi^{-1}}{T}{T}$ represents a conjugate of $g'$.
			
			\emph{Step 3:} The forest isomorphisms $\psi$ and $\psi'$ are conjugate by an automorphism of the forest $\TwoT$.
			
			    Let $m := |\LL T|$.
				 It then follows from Theorem \ref{OT preserved by conj in AutT} that $\psi_1$ and $\psi_1'$ are conjugate in $\AutTdm$. Let $\phi_1 \in\AutTdm$ be such that $\phi_1\psi_1\phi_1^{-1}=\psi_1'$. Let $\phi_0$ be the restriction of $\phi_1$ to $\Tdm\setminus B_1(r)=\T_{d,m} \setminus p_T(T)$ and denote by $\phi=p_T^{-1}\phi_0 p_T$ the corresponding automorphism of $\TwoT$. Then $\phi \psi \phi^{-1}=\psi'$.
	This concludes the proof of the theorem.
	\end{proof}
	
\begin{example}
    Figure \ref{fig:elliptic tree aut conj in aaut} shows an example of two elliptic automorphisms of $\T_{2,2}$ that are conjugate in $\AAut(\T_{2,2})$, but not in $\Aut(\T_{2,2})$.
     \begin{figure}
  \centering
    \subfloat[This tree automorphism has three fixed vertices.]{\includegraphics[scale=0.8]{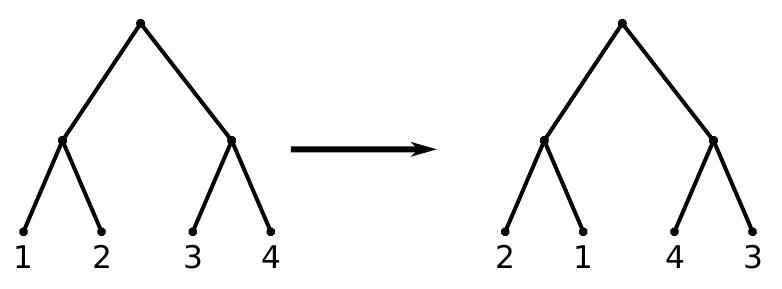}
    }
    
    \subfloat[This tree automorphism has only one fixed vertex.]{\includegraphics[scale=0.8]{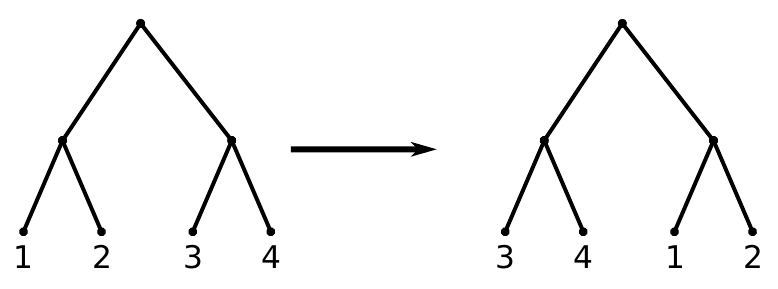}}
    
    \subfloat[This is the almost automorphism via which the two are conjugate.]{\includegraphics[scale=0.8]{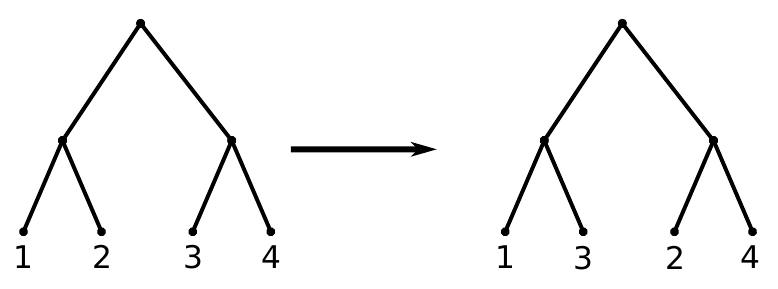}}
  \caption{They cannot be conjugate in $\Aut(\T_{2,2})$ because they have differently many fixed vertices.}
  \label{fig:elliptic tree aut conj in aaut}
 \end{figure}
\end{example}
	
\begin{remark}\label{rem:all_reps_of_BOT}
Let $g$ be an almost automorphism of $\T$ and $\F \in \BOT(g)$.
In this remark we want to explain when one can find an $\F' \in \BOT(g)$ with $\F \subset \F'$. It is enough to look at the labels of the roots.
Recall Remark \ref{rem:properties_of_labels_of_OT}. 
By Item \ref{item:sum of labels of roots}, the sum of all labels of roots in $\F$ is of the form $k + n'\cdot(d-1)$ with $n' \geq 0$. If it is equal to $k$ already, we are done, there is no possible bigger forest. 
Otherwise, by Item \ref{item:l(v) divides l(u)}, any subset of roots with labels $md_1, \dots, md_n$ satisfying $\sum d_i=d$, can be connected to a new root with label $m$, provided the sum of all labels of roots does not become smaller than $k$.
By Lemma \ref{lemma:dk-type iff OT of AAutTdk} there will be a forest isomorphism realizing this labeled forest.
 \begin{figure}[H]
  \centering
    \includegraphics[scale=1]{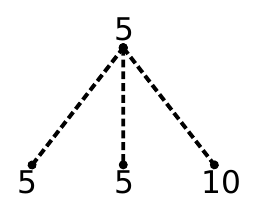}
  \caption{If $d=4$ and $k\geq 4$, since $1+1+2=4$, the three roots with label $5\cdot 1,5\cdot 1$ and $5 \cdot 2$ can be connected to a new vertex with label $5$.}
 \end{figure}
\end{remark}	

\subsection{Closure of conjugacy classes}

We give a characterization for the question when an element $g$ is contained in the closure of the conjugacy class of an element $h$. We denote the conjugacy class of $h$ by $[h]$.

For a rooted, labeled forest $\F$ let $s(\F)$ denote the multiset of labels of roots of $\F$.

\begin{proposition}  
Let $g,h$ be elliptic elements in $\AAutTdk$.
The following are equivalent.
\begin{enumerate}
    \item The element $g$ belongs to the closure of the conjugacy class of $h$.
    \item For every $\F\in \BOT(g)$ there exists $\Tilde{\F}\in \BOT(h)$ such that $s(\F)=s(\Tilde{\F})$.
    \item For almost every $\F\in \BOT(g)$ there exists $\Tilde{\F}\in \BOT(h)$ such that $s(\F)=s(\Tilde{\F})$.
\end{enumerate}
\end{proposition}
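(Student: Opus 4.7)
My plan is to establish the equivalences via the tree-pair characterization of the topology on $\AAutT$ (Remark~\ref{rem:tree pairs define topology}) together with an extension argument on boundary orbital types using Remark~\ref{rem:all_reps_of_BOT}. The key ingredients are: every elliptic element admits tree pairs of the form $\tp{g}{T}{T}$ (Lemma~\ref{lem:ell=phiTT}); $\BOT$ is a complete conjugacy invariant for elliptic elements (Theorem~\ref{thm:conjugacy_elliptic}); and every labeled forest satisfying the conditions of Remark~\ref{rem:properties_of_labels_of_OT} is realizable as the orbital type of some forest automorphism (Lemma~\ref{lemma:dk-type iff OT of AAutTdk}). The direction (2)~$\Rightarrow$~(3) is tautological, so it suffices to prove (1)~$\Leftrightarrow$~(3) and (3)~$\Rightarrow$~(2).

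For (1)~$\Rightarrow$~(3), I would fix a complete finite subtree $T \subset \T$ large enough that $\tp{g}{T}{T}$ is an associated tree pair of $g$, with representative $\foris{\psi}{T}{T}$. This tree pair defines a basic neighborhood $U$ of $g$, and by~(1) it contains a conjugate $aha^{-1}$. Since $aha^{-1}$ is elliptic with $\tp{g}{T}{T}$ as an associated tree pair, it is represented by some forest automorphism $\foris{\psi'}{T}{T}$ satisfying $\overline{\psi'}|_{\LL T}=\overline{g}|_{\LL T}$. Setting $\tilde{\F}:=\OT(\psi')$, we have $\tilde{\F}\in \BOT(aha^{-1})=\BOT(h)$ by the conjugation invariance of $\BOT$, and both $s(\OT(\psi))$ and $s(\tilde{\F})$ record the cycle lengths of the common leaf action $\overline{g}|_{\LL T}$; so $s(\F)=s(\tilde{\F})$ for $\F:=\OT(\psi)$. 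As $T$ ranges over arbitrarily large complete finite subtrees, the orbital types $\OT(\psi)$ exhaust almost every $\F \in \BOT(g)$, giving~(3).

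For (3)~$\Rightarrow$~(1), let $U$ be a basic neighborhood of $g$ defined by a tree pair $\tp{g}{T}{T}$, and enlarge $T$ so that $\F:=\OT(\psi)$ lies in the cofinal subset of $\BOT(g)$ where (3) applies, producing $\tilde{\F}\in \BOT(h)$ with $s(\F)=s(\tilde{\F})$. By Lemma~\ref{lemma:dk-type iff OT of AAutTdk} I realize $\tilde{\F}$ as $\OT(\psi_h)$ for some forest automorphism $\foris{\psi_h}{T}{T}$; the required arithmetic condition on $|\LL T|$ holds because it does for $\F$. The leaf actions $\overline{\psi_h}|_{\LL T}$ and $\overline{g}|_{\LL T}$ have the same cycle structure, so some permutation $\tau$ of $\LL T$ conjugates one into the other; extend $\tau$ to a forest automorphism $\hat{\tau}$ of $\T\setminus T$ via arbitrary isomorphisms between the descendant subtrees of matched leaves. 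Then $\psi_h':=\hat{\tau}\psi_h\hat{\tau}^{-1}$ satisfies $\overline{\psi_h'}|_{\LL T}=\overline{g}|_{\LL T}$ and $\OT(\psi_h')=\tilde{\F}$ by Lemma~\ref{lem:OT preserved by rfi}. The almost automorphism $h'$ represented by $\psi_h'$ thus lies in $U$ and has $\BOT(h')=\BOT(h)$, so by Theorem~\ref{thm:conjugacy_elliptic} we get $h' \in [h]\cap U$.

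For (3)~$\Rightarrow$~(2), given any $\F \in \BOT(g)$, I choose $\F_2 \in \BOT(g)$ of the form $\F_2=\F\setminus A$ for a finite top caret forest $A$, deep enough that (3) applies; this is possible because $\BOT(g)$ is downward directed under the ``strip top'' relation. Condition~(3) gives $\tilde{\F}_2 \in \BOT(h)$ with $s(\F_2)=s(\tilde{\F}_2)$, and hence a label-preserving bijection between the roots of $\F_2$ and of $\tilde{\F}_2$; attach $A$ on top of $\tilde{\F}_2$ along this bijection to obtain a labeled forest $\tilde{\F}$. The labelling conditions of Remark~\ref{rem:properties_of_labels_of_OT} for $\tilde{\F}$ are inherited from those of $\F=A\cup \F_2$, since the divisibility constraints at the gluing level depend only on the root labels of $\F_2$, matched by $\tilde{\F}_2$; thus $\tilde{\F}$ is boundary equivalent to $\tilde{\F}_2$ and lies in $\BOT(h)$, and $s(\tilde{\F})=s(A)=s(\F)$. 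The main obstacle is precisely this last step: one must carefully verify that transplanting $A$ onto $\tilde{\F}_2$ yields a valid element of $\BOT(h)$, which rests on the equality $s(\F_2)=s(\tilde{\F}_2)$ to transfer the divisibility conditions from $\F$ to $\tilde{\F}$.
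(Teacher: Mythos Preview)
Your (3) $\Rightarrow$ (1) and (3) $\Rightarrow$ (2) are correct and close to the paper's approach. However, there is a genuine gap in your (1) $\Rightarrow$ (3): the assertion that ``the orbital types $\OT(\psi)$ exhaust almost every $\F \in \BOT(g)$'' is false in general. For instance, take $d=k=2$ and let $g$ be the order-$2$ tree automorphism swapping the two subtrees below the root. Any tree pair $\tp{g}{T}{T}$ associated to $g$ forces $T$ to be $g$-invariant, so every $\OT(\psi)$ for $\psi$ representing $g$ has \emph{all} root labels equal to~$2$. But by Remark~\ref{rem:all_reps_of_BOT} one may attach, above any root of label~$2$, a new root of label~$1$; doing this at arbitrary depths produces infinitely many pairwise non-isomorphic $\F \in \BOT(g)$ whose root-label multiset contains a~$1$, and none of these are realized by any representative of $g$. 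So ``almost every'' (all but finitely many, as the paper uses it) fails.

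The fix is easy and is already implicit in your own (3) $\Rightarrow$ (2) argument. What your (1) $\Rightarrow$ (3) paragraph actually establishes is the weaker statement that the matching condition holds for every $\F$ of the form $\OT(\psi)$ with $\psi$ representing $g$. Since every $\F\in\BOT(g)$ becomes such a forest after stripping a finite top, your transplanting-of-$A$ argument then upgrades this directly to~(2), from which (3) is tautological. The paper avoids the detour altogether and proves (1) $\Rightarrow$ (2) in one step: given an \emph{arbitrary} $\F\in\BOT(g)$, it invokes Lemma~\ref{lemma:dk-type iff OT of AAutTdk} and Theorem~\ref{thm:conjugacy_elliptic} to realize $\F$ as $\OT(\varphi)$ for $\varphi$ representing a \emph{conjugate} $g'=aga^{-1}$ rather than $g$ itself, and then runs your neighborhood argument with $g'$ in place of $g$.
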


\begin{proof}
We first show that 1 implies 2.
Suppose first that $g \in \overline{[h]}$ and let $(h_n)_n$ be a sequence of conjugates of $h$ converging to $g$. Consider a labeled forest $\F\in \BOT(g)$. By Lemma \ref{lemma:dk-type iff OT of AAutTdk} and Theorem \ref{thm:conjugacy_elliptic} there exists a forest isomorphism $\foris{\varphi}{T}{T}$ representing a conjugate $g' = aga^{-1}$ such that $\F=\OT(\varphi)$. Note that $ah_na^{-1}\to g'$. It follows that there exists an integer $N \geq 0$ such that for all $n\geq N$ the element $ah_na^{-1}$ has a representative $\foris{\psi_n}{T}{T}$ such that $\psi_n \restriction_{\LL T }=\varphi\restriction_{\LL T }$. Take $\Tilde{\F}=OT(\psi_N) \in \BOT(h_N)=\BOT(h)$, then $s(\F)=s(\Tilde{\F})$.

It is obvious that 2 implies 3.

Now we prove that 3 implies 1.
Let $\BOT(g)'$ be the set of all $\F\in \BOT(g)$ such that there exists $\Tilde{\F}\in \BOT(h)$ with $s(\F)=s(\Tilde{\F})$.
By assumption $\BOT(g) \setminus \BOT(g)'$ is finite, so let $\foris{\phi_i}{T_{1,i}}{T_{2,i}}$ for $i=1,\dots,n$ be representatives of all elements in $\BOT(g) \setminus \BOT(g)'$. Let $T'$ be a finite complete subtree of $\T$ with $T_{1,1} \cup \dots \cup T_{1,n} \subset T'$.
By construction $\{\OT(\phi) \mid \phi \in \Aut(\T \setminus T), \, T \supset T' \text{ finite complete}, \, \phi \text{ represents }g\} \subset \BOT(g)'$.
We have to show that for every finite complete subtree $T \supset T'$ of $\T$ such that $g$ has a representative $\foris{\phi}{T}{T}$ there exists a forest isomorphism $\foris{\psi}{T}{T}$ representing a conjugate of $h$ with $\psi|_{\LL T} = \phi|_{\LL T}$.
Let $T\supset T'$ be such a tree and $\foris{\varphi}{T}{T}$ be a representative for $g$.
By assumption, there exists $\Tilde{\F}\in \BOT(h)$ with $s(\Tilde{\F})=s(\OT(\varphi))$. Lemma \ref{lemma:dk-type iff OT of AAutTdk} gives us that $\Tilde{\F}$ is the orbital type of some forest isomorphism $\foris{\psi'}{T}{T}$ representing a conjugate of $h$.
Recall that $\psi'|_{\LL T}$ is a permutation of finitely many elements. Hence it is a product of finitely many disjoint cycles and the leghths of these cycles are precisely the elements of $s(\Tilde{\F})$. Also recall that $s(\Tilde{\F})$ is a complete conjugacy invariant of the finite group $\Sym(\LL T)$.
Since for every permutation $\sigma \in \Sym(\LL T)$ there exists a forest isomorphism $\foris{\alpha}{T}{T}$ with $\alpha|_{\LL T}=\sigma$, it is possible to conjugate $\psi'$ to obtain an element $\foris{\psi}{T}{T}$ with $\psi\restriction_{\LL T }=\varphi\restriction_{\LL T }$. This finishes the proof.
\end{proof}

We conclude this section by considering the set of $\AutT$-conjugates.

\begin{proposition}\label{prop:elliptic AutT conjugates of T22}
If $d=k=2$, then $\{ gag^{-1} \mid a \in \Aut(\T_{2,2}), g \in\AAut(\T_{2,2})\}$ is closed. More precisely, an elliptic element $g \in \AAut(\T_{2,2})$ is conjugate to a tree automorphism if and only if for one (and hence every) forest $\F \in \BOT(g)$, the multiset of labels of roots $s(\F)$ only consists of powers of $2$.
\end{proposition}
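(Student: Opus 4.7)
The plan is to reduce conjugacy to equality of boundary orbital types via Theorem~\ref{thm:conjugacy_elliptic}, then characterize which $\BOT$'s arise from tree automorphisms of $\T_{2,2}$. Two structural observations come first. The root of $\T_{2,2}$ is its unique vertex of valency $k=2$, so every tree automorphism fixes it and in particular no tree automorphism is a translation; thus $\Aut(\T_{2,2})\subset\Ell$. Moreover, the action of $\Aut(\T_{2,2})$ on each finite level factors through an iterated wreath product of $\mathbb{Z}/2\mathbb{Z}$, i.e.\ a finite $2$-group; hence every orbit of a tree automorphism has size a power of $2$, so all labels in $\OT(a|_{\T\setminus T})$ are powers of $2$ for any $a\in\Aut(\T_{2,2})$ and any complete finite subtree $T$ containing the root.

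The ``only if'' direction is then immediate from Theorem~\ref{thm:conjugacy_elliptic}: if $g$ is conjugate to some $a\in\Aut(\T_{2,2})$, then $\BOT(g)=\BOT(a)$, which has only powers-of-$2$ labels. For the ``one iff every'' equivalence I would use Items~\ref{item:l(v) divides l(u)} and \ref{item:sum of labels of children} of Remark~\ref{rem:properties_of_labels_of_OT}: with $d=2$, each vertex of label $m$ has either two children of label $m$ or one child of label $2m$, so powers of $2$ at the roots propagate downward by induction through the whole forest. Since any two forests in $\BOT(g)$ agree outside finite complete subforests, the roots of one appear as descendants in the other, and Item~\ref{item:l(v) divides l(u)} forces their labels to divide a power of $2$, hence to be powers of $2$ themselves.

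For the ``if'' direction, I would fix an $\F\in\BOT(g)$ with $s(\F)=\{2^{a_1},\dots,2^{a_k}\}$ and sum $S:=\sum 2^{a_i}\geq 2$, and apply the combining moves of Remark~\ref{rem:all_reps_of_BOT} to reduce to $s(\F')\in\{\{1,1\},\{2\}\}$. In $\T_{2,2}$ the only two moves are (i) merging two roots of the same label $m$ under a new parent of label $m$, and (ii) adjoining to a single root of label $2m$ a new parent of label $m$. An induction on $S$ does the job: if some $a_i\geq 1$, move (ii) strictly decreases $S$, and using $2^{a_i}\leq S$ one checks that $S$ stays $\geq 2$; otherwise all $a_i=0$ and $k=S\geq 3$, so move (i) reduces $k$ while keeping $S\geq 2$. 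The resulting $\F'$ is then realized, via Lemma~\ref{lemma:dk-type iff OT of AAutTdk}, as $\OT(\phi')$ for a forest automorphism $\phi'$ of $\T_{2,2}\setminus T_{\min}$, where $T_{\min}$ is the complete subtree consisting of the root together with its two children; extending $\phi'$ by fixing the root produces a tree automorphism $h\in\Aut(\T_{2,2})$ with $\BOT(h)=[\F']=\BOT(g)$, and Theorem~\ref{thm:conjugacy_elliptic} concludes.

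Closedness follows once the characterization is in place. The set $\mathcal{C}$ sits inside the clopen subset $\Ell\subset\AAut(\T_{2,2})$ and has been identified with the set of elliptic $g$ for which every $\F\in\BOT(g)$ has only powers-of-$2$ labels. If $g_n\to g$ with $g_n\in\mathcal{C}$, then for any fixed complete finite $T$ the restrictions of $g_n$ and $g$ to $\T\setminus T$ coincide for all large $n$, so some $\F\in\BOT(g)$ equals a corresponding representative in $\BOT(g_n)$ and thereby inherits powers-of-$2$ labels; the ``one iff every'' clause then forces $g\in\mathcal{C}$. I expect the most delicate step to be the combinatorial induction in the ``if'' direction, specifically verifying that the admissibility constraints from Remark~\ref{rem:properties_of_labels_of_OT} are never violated during the reduction.
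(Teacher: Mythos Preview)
Your approach is essentially the paper's: the ``only if'' via the $2$-group structure of the level actions and the ``if'' via the combining moves of Remark~\ref{rem:all_reps_of_BOT} are exactly what the paper does, and you additionally spell out the ``one iff every'' clause and the closedness argument that the paper leaves implicit.

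One genuine slip in the closedness step: $g_n\to g$ does \emph{not} imply that the restrictions of $g_n$ and $g$ to $\T\setminus T$ coincide for large $n$; convergence in $\AAutT$ only gives $g_n^{-1}g\in\Fix(T')$ for arbitrarily large finite $T'$, so the full orbital types $\OT(g_n|_{\T\setminus T})$ and $\OT(g|_{\T\setminus T})$ need not agree. The fix is immediate and in fact simpler than what you wrote: once $g_n^{-1}g\in\Fix(T)$, both $g_n$ and $g$ admit representatives on $\T\setminus T$ inducing the \emph{same} permutation of $\LL T$, and $s(\OT(\cdot))$ depends only on that permutation. Since $s(\F)$ for this $\F\in\BOT(g_n)$ consists of powers of $2$, so does $s(\F)$ for the corresponding $\F\in\BOT(g)$, and your ``one iff every'' clause finishes.
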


\begin{proof}
We first show the ``only if'' direction. If an element of $s(\F)$ is divisible by an odd prime $p$, then almost every $\Tilde{\F} \in \BOT(g)$ has a root the label of which is divisible by $p$.
But for an automorphism of $\T_{2,2}$, all the orbit sizes
of all vertices are powers of $2$. Hence by Theorem \ref{thm:conjugacy_elliptic} we are done with this direction.

For the ``if''-direction, let $\F \in \BOT(g)$ be such that $s(\F)$ only consists of powers of $2$.
By Remark \ref{rem:all_reps_of_BOT} we can enlarge $\F$ either by connecting two trees with root labels $2^n$ and $2^n$ to a new root with label $2^n$, or by connecting one tree with label $2^n$ to a new root with label $2^{n-1}$. Both operations do not destroy the property that all labels of roots are powers of $2$, so we can continue until the sum of the labels is $2$ and we are done.
\end{proof}

\begin{corollary}
Let $d=k=2$ and let $g$ be an elliptic element conjugate to a tree automorphism. Then $id \in \overline{[g]}$ if and only if $\partial\F$ is infinite for one (and hence every) forest $\F\in \BOT(g)$.
\end{corollary}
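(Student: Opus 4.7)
The plan is to invoke the preceding proposition, which identifies $id \in \overline{[g]}$ with the condition that for every $\F' \in \BOT(id)$ there exists $\Tilde{\F} \in \BOT(g)$ with $s(\F') = s(\Tilde{\F})$. Since $\BOT(id)$ consists of the forests in which every label equals $1$ (ancestor labels must divide descendant ones, and the tail is label-$1$), its members realize exactly the multisets $\{1^n\}$ with $n \geq k = 2$. Thus the corollary amounts to showing that $|\partial \F| = \infty$ is equivalent to the existence, for every $n \geq 2$, of some $\Tilde{\F}_n \in \BOT(g)$ with exactly $n$ roots of label $1$.

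For the forward direction, suppose such $\Tilde{\F}_n$ exist for every $n \geq 2$. Any $\F \in \BOT(g)$ has no leaves by Remark~\ref{rem:properties_of_labels_of_OT}: a leaf would force $0 = d \cdot l(v)$. Hence each of the $n$ roots of $\Tilde{\F}_n$ spans an infinite subtree and contributes at least one infinite path, so $|\partial \Tilde{\F}_n| \geq n$. Since $\F$ and $\Tilde{\F}_n$ are boundary-equivalent and boundary equivalence modifies only finite initial pieces, the cardinality of the set of infinite paths is preserved, so $|\partial \F| = |\partial \Tilde{\F}_n| \geq n$ for every $n$, forcing $|\partial \F| = \infty$.

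For the backward direction, assume $|\partial \F| = \infty$. We construct $\Tilde{\F}_n$ in three stages using Remark~\ref{rem:all_reps_of_BOT}. First, iteratively \emph{ungrow} $\F$ (the inverse of growing: remove a root and declare its children to be new roots, a boundary-equivalence move obtained by taking the caret at that root as $F_1$ and $F_2 = \emptyset$) to reach some $\F' \in \BOT(g)$ with $M \geq n$ roots. This succeeds because $|\partial \F| = \infty$ forces $\F$ to contain infinitely many branching vertices (those whose two children share the parent's label), each of which raises the root count by $1$ when exposed, while non-branching carets are traversed without change in count. By Proposition~\ref{prop:elliptic AutT conjugates of T22}, every root label of $\F'$ is a power of $2$. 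Second, for each root of label $2^{a}$, apply the promotion with $d_1 = 2$ exactly $a$ times, stacking above it a chain of new roots whose labels halve at each step and culminate in a label-$1$ root; the root count stays $M$ but every root now has label $1$. Third, apply the combination with $d_1 = d_2 = 1$ a total of $M - n$ times on pairs of label-$1$ roots. The resulting forest has $n$ roots of label $1$ and the same tail as $\F$, so it lies in $\BOT(g)$ and realizes $s = \{1^n\}$, as required.

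The main obstacle is the first stage: showing that $|\partial \F| = \infty$ really permits exposing arbitrarily many roots. If $\F$ had only finitely many branching vertices, $|\partial \F|$ would be finite, contradicting the hypothesis; hence there are infinitely many branchings in $\F$, and successive ungrowings (which descend through non-branching carets without changing the root count and strictly increase it at each encountered branching) can reach any desired number of roots. The axiomatic bookkeeping on the root-label sum (required to stay $\geq k = 2$) is automatic: ungrowing strictly increases the sum, while promotions and combinations decrease it monotonically down to the final value $n \geq 2$.
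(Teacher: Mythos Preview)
Your proof is correct and follows essentially the same route as the paper's. Both arguments reduce the question, via the preceding closure-of-conjugacy-classes proposition, to realising the multisets $\{1^n\}$ inside $\BOT(g)$, and both exploit that the hypothesis on $g$ forces all root labels to be powers of $2$, so that repeated halving (the $d_1=2$ promotion of Remark~\ref{rem:all_reps_of_BOT}) drives every root label to $1$. The only noteworthy difference is organisational: the paper first arranges \emph{exactly} $m$ components (for every sufficiently large $m$) and then promotes, whereas you overshoot to $M\geq n$ components, promote, and then use the $d_1=d_2=1$ combination to trim back to exactly $n$; your extra stage is harmless and in fact yields the ``for every $n$'' version rather than just ``for almost every $n$''. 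Your forward direction is also more explicit than the paper's one-line appeal to Theorem~\ref{thm:conjugacy_elliptic}, spelling out that boundary equivalence preserves $|\partial\F|$.
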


\begin{proof}
The ``only if''-direction is obvious from Theorem \ref{thm:conjugacy_elliptic}.
For the ``if''-direction, note that for every large enough $m \geq 0$ there exists an $\F \in \BOT(g)$ with $m$ many connected components. For each component, we can enlarge it by a new root, the label of which is $1/2$ of the label of the previous root. We continue with this process until all roots are of label $1$, so we have precisely $m$ roots of label $1$. By Theorem \ref{thm:conjugacy_elliptic} we are done.
\end{proof}

\begin{remark}\label{rem:elliptic Aut(T) conjugates not closed}
For $d=k=3$ the set of all $\AutT$-conjugates is not closed, as we illustrate now by an example.
Figure \ref{fig:AutT33-conjugates not closed 2} shows a sequence $h_n$ of almost automorphisms, that converge to the element $g$ given in Fig. \ref{fig:AutT33-conjugates not closed 1}. While $h_n$ is conjugate to an element in $\AutT$ for all $n$, this is not the case for $g$.

More precisely, let $h_n$ be the Higman--Thompson element from Fig. \ref{fig:AutT33-conjugates not closed 2}(a).
  \begin{figure}
  \centering
    \subfloat[the almost automorphism $h_n$]{\includegraphics[scale=0.8]{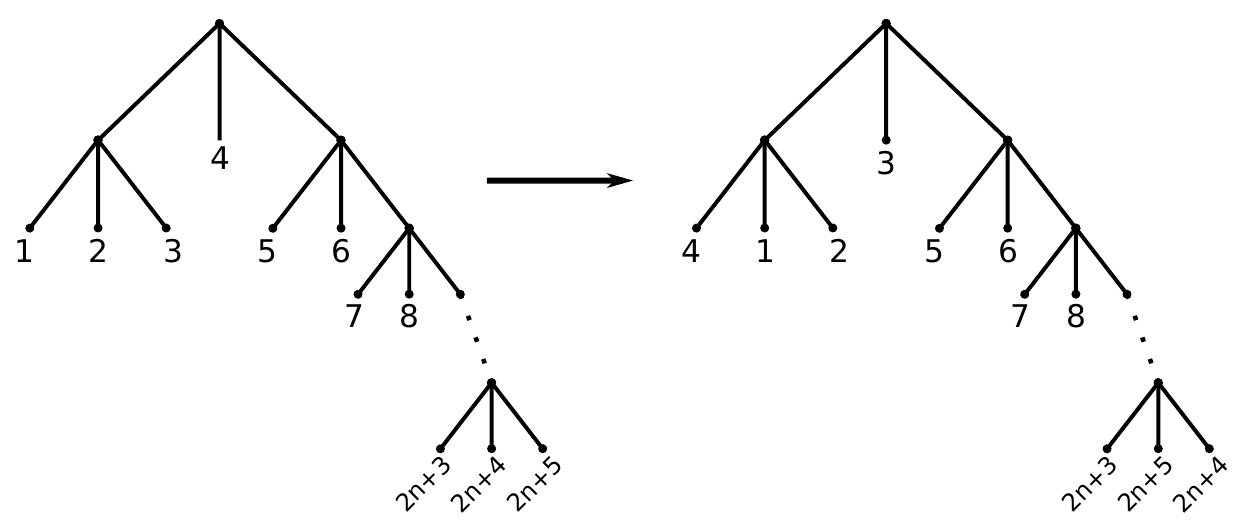}
    }
    
    \subfloat[its orbital type]{\includegraphics[scale=0.8]{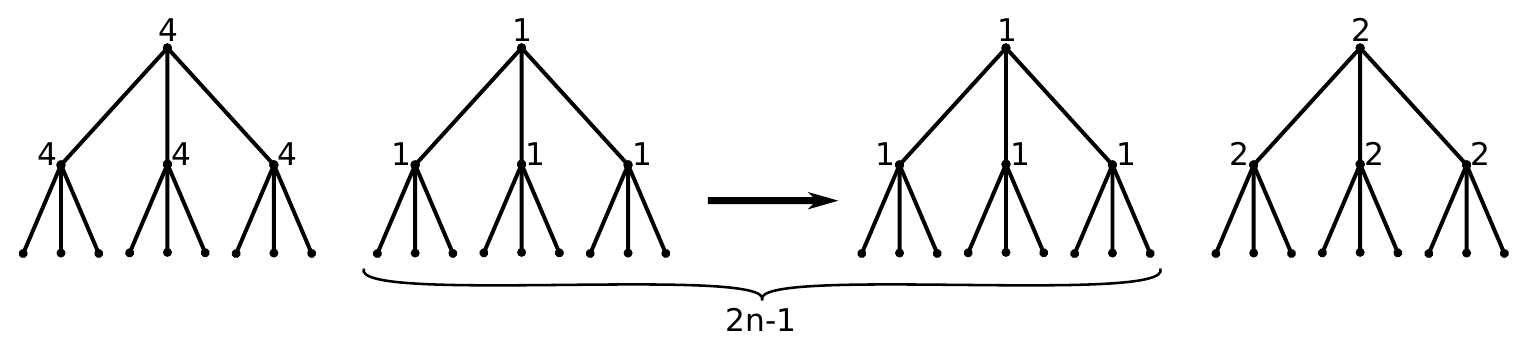}}
    
    \subfloat[a tree automorphism it is conjugate to]{\includegraphics[scale=0.8]{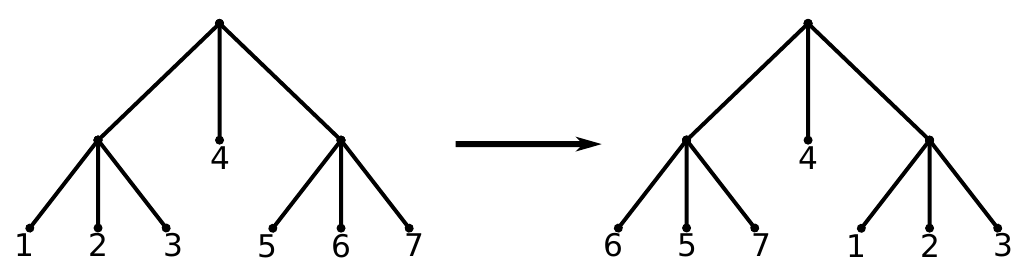}}
    
    \subfloat[its orbital type]{\includegraphics[scale=0.8]{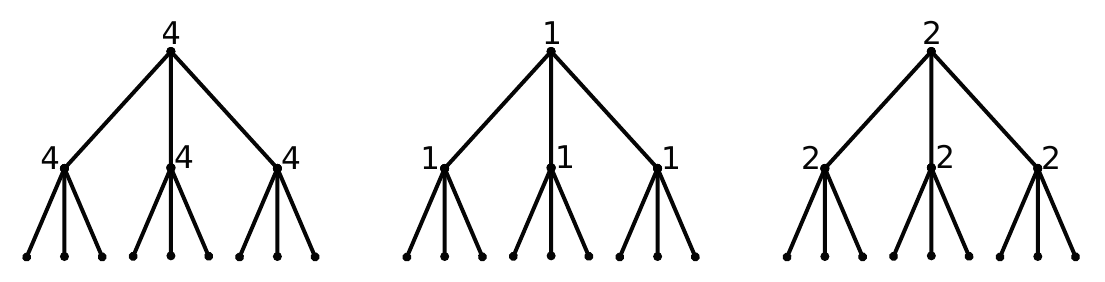}}
  \caption{The almost automorphisms in the first and on the third picture have the same boundary orbital type, so they are conjugate.}
  \label{fig:AutT33-conjugates not closed 2}
 \end{figure}
 Each $h_n$ is conjugate to the tree automorphism (also a Higman--Thompson element) depicted in Fig. \ref{fig:AutT33-conjugates not closed 2}(c);  they have the same boundary orbital type because in $\T_{3,3}$ the numbers of leaves of complete finite subtrees are exactly the odd numbers.
 
Let $g$ be the Higman--Thompson element from Fig. \ref{fig:AutT33-conjugates not closed 1}. It is clear that the sequence $(h_n)$ converges to $g$.
However, it is not hard to see that it can not be conjugate to a tree automorphism.
Indeed, in $\Aut(\T_{3,3})$, a leaf of orbit size $4$ must have an ancestor of orbit size $2$. Suppose now that an element $\F\in \BOT(g)$ contains a vertex of label $2$. It would either have exactly one child, labeled by $6$; have exactly two children, labeled $2$ and $4$; or it would have three children, all labeled by $2$. In all cases, $\F$ must either contain infinitely many vertices with labels divisible by $6$, or, it must contain infinitely many vertices labeled $2$. Both options contradict the assumption $\F$ is equivalent to $\OT(g)$. 
 \begin{figure}
  \centering
    \subfloat[the almost automorphism $g$]{\includegraphics[scale=0.8]{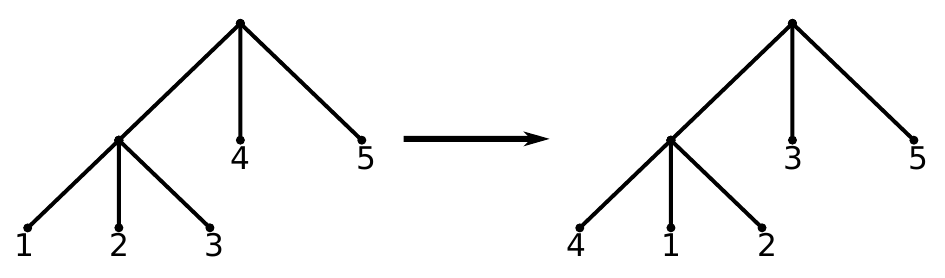}
    }
    
    \subfloat[its orbital type]{\includegraphics[scale=0.9]{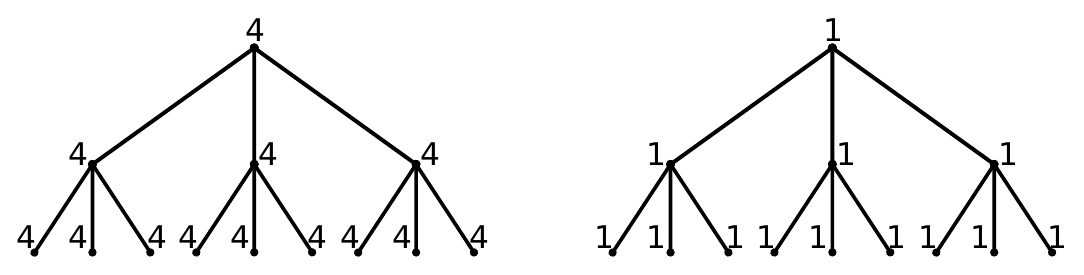}}
  \label{fig:AutT33-conjugates not closed 1}
 \end{figure}
\end{remark}

\begin{question}
For which $d$ and $k$ is the set of $\Aut(\Tdk)$-conjugates closed in $\AAutTdk$?
\end{question}
	
\section{Hyperbolic elements} \label{sec: hyp elements}
	
	In this section $\T = \T_{d,k}$ again denotes the tree such that the root has valency $k \geq 1$ and all other vertices have valency $d+1 \geq 3$. We fix a plane order on $\T$.
The main goal of this section is to prove that two hyperbolic elements are conjugate if and only if the *-reduced BM-diagrams
of sufficiently close Higman--Thompson elements
differ only in the rotation system.

\begin{theorem} \label{thm:hyperblolic_conjugacy}
    Let $\T = \T_{d,k}$.
	Let $g,h$ be hyperbolic tree almost automorphisms of $\T$. Then, $g$ and $h$ are conjugate if and only if their *-reduced BM-diagrams from a revealing pair differ only in the rotation system.
\end{theorem}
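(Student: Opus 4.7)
The plan is to reduce the problem to a conjugacy question inside the Higman--Thompson group $V_{d,k}$, where the Belk--Matucci theorem (Theorem \ref{thm:belkmatucci}) applies, and then separately account for the extra freedom provided by almost automorphisms that do not preserve the plane order. The backbone of the argument is the intuition from Fig.~\ref{fig:problemwithIstar}: Type I$^*$ reductions exactly correspond to the ``permute the children'' moves that are unavailable inside $V_{d,k}$ but that become available in $\AAutT$.

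First, I would prove an auxiliary lemma that every hyperbolic element $g$ is conjugate in $\AAutT$ to the Higman--Thompson element $v_g$ induced (in the sense of Definition \ref{def:HT_induced_by_tree_pair}) by a revealing pair $P = \tp{g}{T_1}{T_2}$ associated with $g$. Concretely, I would construct a conjugator by rectifying the leaf bijection $\overline{g}$ to the plane-order-preserving bijection $\kappa \colon \LL T_1 \to \LL T_2$ that has the same pattern of attractor, repeller, periodic, and wandering chains. This is achieved by choosing cancelling trees (in the sense of Salazar-D\'iaz) along each maximal chain, and permuting the subtrees hanging off the leaves accordingly so that the rectifier is itself an almost automorphism.

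For the ``only if'' direction, given $h = a g a^{-1}$, take revealing pairs $\tp{g}{T_1}{T_2}$ and $\tp{h}{T_3}{T_4}$; by enlarging them (Lemma \ref{lem: constructing a revealing pair}) one may arrange that $a$ admits a representative $\foris{\alpha}{T_1}{T_3}$. Then $\alpha$ induces a graph isomorphism between the two basic BM-diagrams that preserves the cohomology class and respects every hourglass, because $\alpha$ sends the $\overline{g}$-orbit structure on $\LL T_1 \cup \LL T_2$ to the $\overline{h}$-orbit structure on $\LL T_3 \cup \LL T_4$. Corollary \ref{cor: diff_in_rot_system_preserved_under_reduction} then transports the isomorphism-up-to-rotation to the $*$-reductions. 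For the ``if'' direction, after replacing $g$ and $h$ by $v_g, v_h \in V_{d,k}$ via the auxiliary lemma, I would realize the rotation-system discrepancy as a product of local child-permutations below specific leaves of a revealing pair; conjugating $v_g$ by such an element produces $v_g' \in V_{d,k}$ whose reduced BM-diagram is honestly isomorphic to that of $v_h$, and Theorem \ref{thm:belkmatucci} then gives $v_g' \sim v_h$ in $V_{d,k}$, hence $g \sim h$ in $\AAutT$.

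The main obstacle will be the auxiliary lemma in the first paragraph: fabricating the rectifying conjugator without disturbing the revealing pair structure. The delicate point is that permuting leaves of a single orbit forces compensating permutations further down the subtrees, and one must verify that these cascade into a well-defined element of $\AAutT$. A secondary subtlety appears in the ``only if'' direction, where one must show that after sufficient enlargement by rollings the conjugating $\alpha$ really does respect all hourglasses --- this in turn relies on the fact that hourglasses in basic BM-diagrams of revealing pairs correspond to $\overline{g}$-orbits of neutral leaves, and $\alpha$ is equivariant on those orbits by construction.
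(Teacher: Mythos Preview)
Your overall architecture matches the paper's: reduce hyperbolic $g$ to the Higman--Thompson element $v_g$ induced by a revealing pair, then compare diagrams. Your auxiliary lemma is exactly the paper's Proposition~\ref{prop:v and av are conj}. The paper does not build the conjugator from cancelling trees as you suggest; instead it writes $g = a v_g$ with $a \in \Fix(T_2)$ and constructs $b \in \Fix(T_1 \cup T_2)$ solving $v_g^{-1} b a v_g = b$ by an explicit recursion along each maximal chain (Lemma~\ref{lem: constructing self periodic element}). Your cascading-permutations idea points at the same phenomenon, but you should expect the actual construction to be this recursion rather than a rolling argument. Your ``if'' direction is essentially the paper's Proposition~\ref{prop: diagram_differ_in_rot_sys_means_thompson_elements_differ_little}.

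There is a genuine gap in your ``only if'' direction. You claim that, given \emph{arbitrary} revealing pairs $P_g = \tp{g}{T_1}{T_2}$ and $P_h = \tp{h}{T_3}{T_4}$, one can enlarge both so that the conjugator has a representative $\foris{\alpha}{T_1}{T_3}$. What you can actually achieve is: enlarge $P_g$ until $\alpha$ is defined on $\T \setminus (T_1 \cap T_2)$, and then $\alpha$ transports $P_g$ to \emph{some} revealing pair $\alpha(P_g)$ for $h$. Nothing forces $\alpha(P_g)$ to coincide with (an enlargement of) the given $P_h$; two revealing pairs for $h$ are not in general related by rollings that keep the induced Higman--Thompson element fixed. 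Consequently your argument proves the diagrams of $P_g$ and $\alpha(P_g)$ agree up to rotation, but not that the diagram of an arbitrary $P_h$ does. You are missing exactly the well-definedness statement that the $*$-reduced BM-diagram up to rotation is independent of the revealing pair. The paper supplies this through Lemma~\ref{lem: fixator elements change only the rotation system}: if $v$ is the HT element induced by a revealing pair $P$ and $b v a^{-1} \in V_{d,k}$ with $a,b \in \Fix(T_1 \cap T_2)$ trivial on $\eell(v)$, then any tree pair for $bva^{-1}$ has $*$-reduced diagram isomorphic up to rotation to that of $P$. This lemma is proved by an inductive sequence of cancelling-tree rollings combined with Corollary~\ref{cor: diff_in_rot_system_preserved_under_reduction}, and it is the technical heart of the argument --- your proposal invokes that corollary once but does not set up the induction.

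A minor point: your description of why $\alpha$ respects hourglasses is off. The single hourglass in the pre-basic diagram is the two copies of $T_1 \cap T_2$, with the \emph{identity} correlation (a vertex $v$ in the $T_1$-copy is correlated to the same $v$ in the $T_2$-copy); it has nothing to do with $\overline g$-orbits. The reason $\alpha$ respects it is simply that $\alpha$ permutes the children of $v$ by the same permutation whether $v$ is viewed on the split side or the merge side. This part of your argument is salvageable, but the independence issue above is the real obstacle.
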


\subsection{Passing to Higman--Thompson elements}

The first step in the proof of Theorem \ref{thm:hyperblolic_conjugacy} is to show that a hyperbolic almost automorphism and a sufficiently close Higman--Thompson element are conjugate.

Let $x$ be a vertex in $\T$. Recall that $\T_x$ is a subtree of $\T$ that is rooted in $x$ and isomorphic to $\T_{d,d}$. For two vertices $x,y\in \T$ different from the root, the plane order of $\T$ induces a unique plane order preserving isomorphism $J_{x,y} \colon \T_x\to \T_y$. Whenever $\alpha$ is an automorphism of a tree fixing some vertex $x$, we denote by $\alpha_x \in \Aut(\T_x)$ the restriction of $\alpha$ to $\T_x$.

The following lemma is about recursively defining a tree automorphism.

\begin{lemma}
    \label{lem: constructing self periodic element}
    Let $x$ be a vertex of $\T$ and let $y$ be a descendant of $x$. Let $\eta \colon \T_x \to \T_y$, $\xi \colon \T_y \to \T_x$ and  $\alpha \in \Aut(\T_x \setminus \T_y)$ be isomorphisms. Then there exists an automorphism $\beta \in \Aut(\T_x)$ such that $\beta(y)=y$ and
    $$ \beta \restriction_{\T_x \setminus \T_y}=\alpha$$
    $$ \beta \restriction_{\T_y}=\eta \beta \xi.$$
\end{lemma}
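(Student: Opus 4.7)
The plan is to construct $\beta$ by an exit-time formula, using that $\xi$ is depth-decreasing on $\T_y$. Let $\delta := \mathrm{dist}_{\T}(x, y)$. Since $\xi \colon \T_y \to \T_x$ is a root-preserving isomorphism, it sends a vertex $v \in \T_y$ at distance $k$ from $y$ to a vertex at distance $k$ from $x$ in $\T_x$; hence $\xi(v) \in \T_y$ iff $k \geq \delta$, and in that case $\xi(v)$ lies at distance $k - \delta$ from $y$. The exit time $n(v) := \min\{n \geq 1 : \xi^n(v) \notin \T_y\} = \lfloor k(v)/\delta \rfloor + 1$ is therefore finite for every $v \in \T_y$, where $k(v)$ denotes the distance from $y$ to $v$ in $\T_y$.

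I will then define $\beta$ piecewise: $\beta(v) := \alpha(v)$ for $v \in \T_x \setminus \T_y$, and $\beta(v) := \eta^{n(v)}(\alpha(\xi^{n(v)}(v)))$ for $v \in \T_y$. A preliminary observation: $\alpha$ pointwise fixes the entire path from $x$ to $y$ in $\T_x \setminus \T_y$, because $y$ is the unique leaf (so $\alpha(y) = y$) and each vertex $p_i$ on the path is the unique parent of the already-fixed $p_{i+1}$. This ensures the two formulas agree on the overlap $\{y\}$, since $\eta(\alpha(\xi(y))) = \eta(\alpha(x)) = \eta(x) = y$ by root-preservation.

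From there, the verification has three main steps. First, check that $\beta$ is a tree automorphism of $\T_x$: since $\beta$ fixes the root $x$ and preserves depth, this reduces to parent-child preservation level by level, and injectivity then follows because $\beta^{-1}$ admits an analogous formula using $\alpha^{-1}, \eta^{-1}, \xi^{-1}$ and the exit time of $w$ under $\eta^{-1}$. Second, observe that $\beta(y) = y$ and $\beta\restriction_{\T_x \setminus \T_y} = \alpha$ are immediate by construction. Third, verify the recursive identity $\beta\restriction_{\T_y} = \eta \beta \xi$ by unfolding: if $\xi(v) \notin \T_y$ both sides equal $\eta(\alpha(\xi(v)))$, and if $\xi(v) \in \T_y$ then $n(\xi(v)) = n(v) - 1$ and the definition applied at $\xi(v)$ gives $\eta(\beta(\xi(v))) = \eta^{n(v)}(\alpha(\xi^{n(v)}(v))) = \beta(v)$.

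The main obstacle is edge-preservation at the ``jump levels'' $k(v) \equiv 0 \pmod{\delta}$, where a vertex $v$ at depth $m\delta$ in $\T_y$ and its parent $u$ at depth $m\delta - 1$ have exit times differing by one, so that the formulas for $\beta(v)$ and $\beta(u)$ involve differently many applications of $\eta$. I expect a short computation to show $\xi^{m+1}(v) = x$ and $\xi^m(u) = p(y)$, so that (using $\alpha(x) = x$ and $\alpha(p(y)) = p(y)$) we get $\beta(v) = \eta^m(y)$ and $\beta(u) = \eta^m(p(y))$; since $\eta$ is a parent-preserving isomorphism, $\eta^m$ sends the edge $(p(y), y)$ to the edge $(\beta(u), \beta(v))$. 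This is precisely where the pointwise fixing of the path $x, p_1, \dots, p_\delta = y$ by $\alpha$ is essential, and it is the only genuine subtlety in the proof.
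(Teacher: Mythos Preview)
Your approach is the same as the paper's---both construct $\beta$ by recursion, the paper layer by layer and you via an explicit exit-time formula---and your key observation that $\alpha$ fixes the path from $x$ to $y$ pointwise (since $y$ is the unique leaf of $\T_x\setminus\T_y$) is correct and is exactly what makes the construction work.

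There is, however, a slip in your depth analysis. From the fact that $\xi$ sends a depth-$k$ vertex of $\T_y$ to a depth-$k$ vertex of $\T_x$ you conclude ``$\xi(v)\in\T_y$ iff $k\geq\delta$,'' but only the forward implication holds: a depth-$k$ vertex of $\T_x$ with $k\geq\delta$ need not lie in the particular subtree $\T_y$. Consequently $n(v)=\lfloor k(v)/\delta\rfloor+1$ should be the inequality $n(v)\leq\lfloor k(v)/\delta\rfloor+1$, and your description of when the exit times of a parent $u$ and child $v$ differ is imprecise. The correct dichotomy is: always $n(v)\in\{n(u),n(u)+1\}$, with $n(v)=n(u)+1$ precisely when $\xi^{n(u)}(v)=y$ (which forces $k(v)=n(u)\delta$, hence a jump level, but jump levels can also fall into the first case).

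This does not break your argument. Finiteness of $n(v)$ follows from the inequality; the case $n(v)=n(u)$ is your easy case; and in the case $n(v)=n(u)+1$ your computation $\xi^{n(v)}(v)=x$, $\xi^{n(u)}(u)=p(y)$, hence $\beta(v)=\eta^{n(u)}(y)$ and $\beta(u)=\eta^{n(u)}(p(y))$, goes through verbatim. So the fix is only to replace ``at jump levels the exit times differ by one'' with ``when the exit times differ by one (which can only happen at a jump level)''.
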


\begin{proof}
The proof is by recursion. First set $\beta\restriction_{\T_x\backslash\T_y}=\alpha$, then set $\beta\restriction_{\T_y\backslash \eta(\T_y)}=(\eta\xi)\restriction_{\T_x\backslash\T_y}$, and so forth. It is a simple exercise to see this gives a well defined automorphism.
\end{proof}

\begin{proposition}\label{prop:v and av are conj}
    Let $g, h \in \AAutT$ be two hyperbolic elements that admit the same revealing pair $\tp{g}{T_1}{T_2}=\tp{h}{T_1}{T_2}$. Then there exists $b \in \Fix(T_1\cup T_2)$ such that $g = b^{-1}hb$.

    Equivalently, if $v$ is the Higman--Thompson element induced by a revealing pair $[\kappa,T_1,T_2]$ and if $a\in \Fix(T_2)$ acts trivially on $\eell(v)$, then there exists  $b \in \Fix(T_1\cup T_2)$ such that $av=b^{-1}vb$.
\end{proposition}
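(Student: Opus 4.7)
The plan is to pass to the equivalent formulation, where $v$ denotes the Higman--Thompson element induced by $P = [\kappa, T_1, T_2]$ and we seek $b \in \Fix(T_1 \cup T_2)$ with $bav = vb$, and then to construct $b$ locally along each maximal chain of $P$. Because $a \in \Fix(T_2)$ and $b \in \Fix(T_1 \cup T_2)$, they decompose as $a = \prod_{y \in \LL T_2} a_y$ with $a_y \in \AAut(\T_y)$, and $b = \prod_{w \in \LL(T_1 \cup T_2)} b_w$ with $b_w \in \AAut(\T_w)$. Restricting $bav = vb$ to each $\dT_x$ with $x \in \LL T_1$ converts the equation into a system of local equations in these pieces, and the chain classification of Remark~\ref{rem:four_types_of_leafs} makes the system decouple into one subsystem per maximal chain.

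Periodic chains are immediate: each periodic leaf lies in $\eell(v)$ by Lemma~\ref{lem:leaves of a revealing pair}, so $a_{x_i} = \mathrm{id}$ there by hypothesis and one takes $b_{x_i} := \mathrm{id}$. For a wandering chain $(x_0, \dots, x_n)$, I would set $b_{x_0} := \mathrm{id}$ at the source and use the forced recursion $b_{x_i} := J_{x_{i-1}, x_i}\, b_{x_{i-1}}\, J_{x_{i-1}, x_i}^{-1}\, a_{x_i}^{-1}$ for $i = 1, \dots, n$, obtained by restricting $bav = vb$ to $\dT_{x_{i-1}}$; since $x_n \notin \LL T_1$ no further consistency condition appears.

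The heart of the argument is the attractor chain $(x_0, \dots, x_n)$. Here the root $x_0$ of the attractor component $C_A$ is not a leaf of $T_1 \cup T_2$, so $b|_{\T_{x_0}} = \prod_{w \in \LL C_A} b_w$, and because the attractor $x_n$ is itself a leaf of $C_A$, the piece $b_{x_n}$ appears on both sides of the chain recursion. I would choose $b_w := \mathrm{id}$ for every $w \in \LL C_A \setminus \{x_n\}$, reducing the problem to finding $b_{x_n} \in \AAut(\T_{x_n})$ satisfying
\[
    b_{x_n}\big|_{\dT_{x_n}\setminus \widetilde K(\dT_{x_n})} = K\, \widetilde K^{-1}, \qquad b_{x_n}\big|_{\widetilde K(\dT_{x_n})} = K\, b_{x_n}\, \widetilde K^{-1},
\]
where $K := J_{x_{n-1}, x_n} \circ \cdots \circ J_{x_0, x_1}$ is the plane-order-preserving isomorphism $\T_{x_0} \to \T_{x_n}$ along which $v^n$ contracts and $\widetilde K := a_{x_n}\, J_{x_{n-1}, x_n}\, a_{x_{n-1}} \cdots a_{x_1}\, J_{x_0, x_1}$ is the corresponding almost automorphism arising from $(av)^n|_{\T_{x_0}}$. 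Explicitly, set $b_{x_n} := K^{k+1} \widetilde K^{-(k+1)}$ on the clopen annulus $\widetilde K^k(\dT_{x_n}) \setminus \widetilde K^{k+1}(\dT_{x_n})$ for each $k \geq 0$, sending the attractor of $av$ to the attractor of $v$ in the limit. Repeller chains are handled by the symmetric argument applied to $v^{-1}$ and $(av)^{-1}$, and the local pieces assemble into the desired $b \in \Fix(T_1 \cup T_2)$ because distinct chains involve disjoint subtrees.

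The main obstacle is this attractor step. Lemma~\ref{lem: constructing self periodic element} is stated with a tree isomorphism $\eta$ and a tree automorphism $\alpha$, whereas here $\widetilde K$ is only an almost automorphism and $\widetilde K(\T_{x_n})$ is not a subtree of $\T_{x_n}$ in general. I expect the recursive proof of Lemma~\ref{lem: constructing self periodic element} to go through verbatim when all the data are replaced by almost automorphisms --- at each step one extends $\beta$ by one further clopen layer using what is already defined --- but this extension must be written out carefully, and one must verify that the resulting layered assignment indeed defines an element of $\AAut(\T_{x_n})$ and not merely a homeomorphism of $\dT_{x_n}$.
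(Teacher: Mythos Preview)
Your overall architecture --- decompose $b$ over $\LL(T_1\cup T_2)$ and solve chain by chain --- is exactly the paper's, and your layered construction at the attractor is the content of Lemma~\ref{lem: constructing self periodic element}. But the assertion that ``distinct chains involve disjoint subtrees'' hides a real coupling, and your prescriptions are inconsistent. The sink $x_n$ of a wandering chain is a non-attractor leaf of some attracting component $C_A$, and the source $x_0$ is a non-repeller leaf of some repelling component. In your wandering step you set $b_{x_0}:=\mathrm{id}$ at the source and propagate forward, obtaining $b_{x_n}=J_{x_{n-1},x_n}\cdots a_{x_1}^{-1}\cdots$, which is not the identity in general; yet in your attractor step you independently declare $b_w:=\mathrm{id}$ for every $w\in\LL C_A\setminus\{\text{attractor}\}$, i.e.\ for every sink. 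These two values for $b$ at the same leaf disagree. Likewise your claim that ``since $x_n\notin\LL T_1$ no further consistency condition appears'' is false: $x_n$ reappears in the equation at the root $s_0$ of $C_A$, because $b|_{\T_{s_0}}=\prod_{w\in\LL C_A}b_w$. The paper resolves this by ordering the steps: attractor chains \emph{first} (with $b$ chosen identity on the whole attracting component, hence at every sink), then wandering chains propagated \emph{backward} from the now-known sink values, and finally repeller chains, feeding the already-computed source values into Lemma~\ref{lem: constructing self periodic element} as the datum $\alpha$. Your forward scheme can be salvaged by the mirror ordering (repellers first with $b=\mathrm{id}$ on repelling components, then wander forward, then attractors using the computed sink values rather than $\mathrm{id}$), but not as written.

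Separately, your ``main obstacle'' is not one. By definition $\Fix(T_2)$ is a subgroup of $\AutT$, not merely of $\AAutT$; hence each $a_{x_i}$ is an honest automorphism of $\T_{x_i}$, your $\widetilde K$ is a genuine tree isomorphism $\T_{x_0}\to\T_{x_n}$, and $\widetilde K(\T_{x_n})$ is a subtree. Lemma~\ref{lem: constructing self periodic element} therefore applies verbatim, with no extension to almost automorphisms required.
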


\begin{proof} We prove the second formulation of the proposition.

Observe that $av=b^{-1}vb$ is equivalent to $v^{-1}bav=b$. Let $\foris{\phi}{T_1}{T_2}$ be the (unique) plane order preserving forest isomorphism such that $\varphi\restriction _{\LL T_1}=\kappa$. By assumption $\phi$ is a representative of $v$. For $x\in \T\setminus T_1$ set $\varphi_x=\varphi\restriction _{\T_x}$, which is actually just $J_{x,\varphi(x)}$.

We construct the element $b$ explicitly. Below we define, for every $x\in \LL T_1$, an automorphism $b_x\in \Aut(\T_x)$, and we set $b$ to be the unique automorphism of $\T$ such that $b\restriction_{T_1}=id_{T_1}$ and $b\restriction_{\T_x}=b_x$ for $x\in \LL T_1$.
(Observe we are abusing notation here a little, since at first $b_x$ is not the restriction of an automorphism b to $\T_x$, but just an automorphism of $\T_x$; of course, once we finish the construction, it will follow that $b_x$ is the restriction of $b$ to $\T_x$.)
Obviously, this defines a unique element $b\in \Fix(T_1)$. Moreover, our construction of $b_x$ will guarantee that $b\in \Fix(T_1\cup T_2)$ and that $b=v^{-1}bav$. 

Let us discuss the latter equation in a little more detail.
Since $\bigsqcup_{x\in \LL T_1} \T_x = \T \setminus T_1$, it is enough to show that  
$b\restriction_{\T_x}=(v^{-1}bav)\restriction_{\T_x}$ is satisfied for every $x\in\LL T_1$.
However, as for every $x\in \LL T_1$ we have that $v\restriction_{\T_x}=J_{x,\varphi(x)}$ and as both $a$ and $b$ preserve the tree $\T_{\varphi(x)}$, we get $(v^{-1}bav)\restriction_{\T_x}=J_{\varphi(x),x}b_{\varphi(x)} a_{\varphi(x)} J_{x,\varphi(x)}$. That is, we need to make sure that

\begin{equation}\label{eq:I_x}
    b_x=J_{\varphi(x),x}b_{\varphi(x)} a_{\varphi(x)} J_{x,\varphi(x)}. \tag{$I_x$}
\end{equation}

is satisfied for all $x\in \LL T_1$ when we construct $b_{x}$. Now, observe that if $\varphi(x)\in\LL T_1$, then $b\restriction_{\T_{\varphi(x)}}$ is just $b_{\varphi(x)}$. If on the other hand $\varphi(x)\notin\LL T_1$, then there are two options.
Either $\varphi(x)$ has an ancestor $z$ that is a leaf of $T_1$, in which case $b\restriction_{\T_{\varphi(x)}}$ is $b_z\restriction_{\T_{\varphi(x)}}$, or $\varphi(x)$ is a root of a component $M$ of $T_1\backslash T_2$, in which case $b\restriction_{\T_{\varphi(x)}}$ is the identity on $M$ and is equal to $b_x$ on $\T_x$ for every $x\in\LL M$.
We will keep this in mind as we construct each $b_x$.

Since $[\kappa,T_1,T_2]$ is a revealing pair, every leaf of $T_1$ belongs to a maximal chain that is either an attractor, a wandering, a repeller or a periodic chain (see Remark \ref{rem:four_types_of_leafs}). Observe that if a chain $(x_{0},\dots,x_{n})$ is not periodic, then every $x_i$ except $x_n$ is a leaf of $T_1$ (and $x_n$ is never a leaf of $T_1$), whereas, if it is periodic, every $x_i$ is a leaf of $T_1$. We now explain how to construct $b_x$ for every type of leaf. 

The case of periodic chains is easy. By assumption, $v$ and $a$ act trivially there, so we can just set $b_x = id$ there for all periodic leaves $x \in \LL T_1$.

We then take care of attractor chains. Let $(s_0,\dots,s_n)$ be an attractor chain. Since $s_1,\dots,s_n\in\LL T_2$, $a$ fixes them. The vertex $s_n$ is a descendant of $s_0$, which means that $\T_{s_{n}}\subseteq\T_{s_0}$.
We wish to define $b_{s_i}\in \Aut(\T_{s_i})$ for every $i=0,\dots,n-1$, such that Eq. \ref{eq:I_x} is satisfied for $x=s_0,\dots,s_{n-1}$. That is, we need 

\begin{align}
b_{s_0}  & =J_{s_1,s_0}b_{s_1} a_{s_1} J_{s_0,s_1} \tag{$A_0$} \label{eq:A_s0}\\
b_{s_1} & =J_{s_2,s_1}b_{s_2} a_{s_2} J_{s_1,s_2} \tag{$A_1$} \label{eq:A_s1} \\
&\vdots \nonumber\\
b_{s_{n-1}}&=J_{s_n,s_{n-1}}b_{s_n} a_{s_n} J_{s_{n-1},s_n}. \tag{$A_{n-1}$}\label{eq:A_s_n-1}
\end{align}

Now, substituting $b_{s_{n-1}}$ from the last equation into the penultimate one, then substituting $b_{s_{n-1}}$ from that equation into the one before and so on, we get

\begin{equation} \label{eq:A}
    b_{s_0}=(J_{s_1,s_0}b_{s_1}J_{s_2,s_1}\cdots J_{s_n,s_{n-1}})b_{s_n}(a_{s_n}J_{s_{n-1},s_n}\cdots J_{s_1,s_2}a_{s_1}J_{s_0,s_1}). \tag{$A$}
\end{equation}

The last equation involves $b_{s_0}$ twice, because $b_{s_n}=b_{s_0}\restriction_{\T_{s_n}}$.
It is now Lemma~\ref{lem: constructing self periodic element} that will ensure us the existence of an element $b_{s_0}$ solving this equation, we explain how: Consider the right hand side of Eq. \ref{eq:A}. Since $a$ fixes $s_1,\dots,s_n$, the expression in the right parentheses is a map from $\T_{s_0}\to \T_{s_n}$, let us call it $\eta$, as in the notations of the claim; similarly, the expression in the left parentheses is a map from $\T_{s_n}\to \T_{s_0}$, let us call it $\xi$, as in the notations of the claim.
Set $b_{s_0}=\beta$ be the unique element obtained from the lemma, satisfying
\begin{align*}
    b_{s_0} \restriction _{\T_{s_0}\setminus \T_{s_n}}&= id_{\T_{s_0}\setminus \T_{s_n}} \\
    b_{s_0}\restriction _{\T_{s_n}}&= \eta b_{s_0} \xi.
\end{align*}
In particular, Eq. \ref{eq:A} is satisfied, and all leaves of the attracting component are fixed by $b_{s_0}$. 
We now set $b_x$ on the remaining leaves in the chain. Equation \ref{eq:A_s0} depends only on $b_{s_0}$ and $b_{s_1}$. Set $b_{s_1}$ such that this equation is satisfied; next, Eq. \ref{eq:A_s1} depends only on $b_{s_1}$ and $b_{s_2}$, set $b_{s_2}$ such that this equation is satisfied; and so forth, we continue until defining $b_{s_{n-1}}$ based on Eq. \ref{eq:A_s_n-1}.
Perform this process on every attractor chain.

Next we deal with leaves belonging to wandering chains. Let $(w_0,\dots,w_n)$ be a wandering chain. Again $w_0,\dots,w_{n-1}\in \LL T_1$, while $w_n$ is a leaf of a component of $T_2\setminus T_1$.
The root $s$ of this component is, as our pair is revealing, the first vertex in an attractor chain. It follows that $b_s$ was already set in the previous step, and since $\T_{w_n}\subset \T_s$, so was $b_{w_n}=b_s\restriction _{\T_{w_n}}$.
We need to define $b_{w_i}\in \Aut(\T_{w_i})$ for $i=0,1,\dots,n-1$ such that Eq. \ref{eq:I_x} holds for the leaves $x=w_0,\dots,w_{n-1}$. As above, we have to satisfy

\begin{align}
    b_{w_0}&=J_{w_1,w_0}b_{w_1} a_{w_1} J_{w_0,w_1} \tag{$W_0$}\label{eq:W_w0} \\
    b_{w_1}&=J_{w_2,w_1}b_{w_2} a_{w_2} J_{w_1,w_2} \tag{$W_1$}\label{eq:W_w1} \\
\vdots \nonumber \\
    b_{w_{n-1}}&=J_{w_n,w_{n-1}}b_{w_n} a_{w_n} J_{w_{n-1},w_n}. \tag{$W_{n-1}$}\label{eq:W_w_n-1}
\end{align}
Similar to what we had in the previous step, also here Eq. \ref{eq:W_w_n-1} depends only on $b_{w_{n-1}}$ and $b_{w_n}$. As $b_{w_n}$ is already set, we take $b_{w_{n-1}}$ to be the (unique) element satisfying this equation. We then successively set $b_{w_{n-2}},\dots,b_{w_0}$ on the same way. Perform this process on every wandering chain.

Next we come to repeller chains. Let $(r_0,\dots,r_n)$ be a repeller chain. In this case $r_0,\dots,r_{n-1}\in \LL T_1$ and $r_n$ is an ancestor of $r_0$. Moreover, $r_n$ is the root of a component $M$ of $T_1\setminus T_2$, and the leaves of this component are all vertices of wandering chains (again, since the tree pair is revealing). In particular, $b_x$ was already defined in the previous step for all $x\in \LL M\setminus \{r_0\}$.
Also here, we have to satisfy Eq. \ref{eq:I_x} for all leaves $x=r_0,\dots,r_{n-1}$. This means again
\begin{align}
    b_{r_0}&=J_{r_1,r_0}b_{r_1} a_{r_1} J_{r_0,r_1} \tag{$R_0$}\label{eq:R0} \\
    b_{r_1}&=J_{r_2,r_1}b_{r_2} a_{r_2} J_{r_1,r_2} \tag{$R_1$}\label{eq:R1} \\
&\vdots \nonumber \\
    b_{r_{n-1}}&=J_{r_n,r_{n-1}}b_{r_n} a_{r_n} J_{r_{n-1},r_n}. \tag{$R_{n-1}$}\label{eq:Rn-1}
\end{align}
Plugging in $b_{r_{n-1}}$ from the last equation to the one before, and so on, as in the attractor chain case, we get

\begin{equation}\label{eq:R}
    b_{r_0}=(J_{r_1,r_0}b_{r_1}J_{r_2,r_1}\cdots J_{r_n,r_{n-1}})b_{r_n}(a_{r_n}J_{r_{n-1},r_n}\cdots J_{r_1,r_2}a_{r_1}J_{r_0,r_1}). \tag{R}
\end{equation}

Note that $r_0=r_n\restriction_{\T_{r_0}}$. In order to define $b_{r_n}$ we use Lemma \ref{lem: constructing self periodic element} again. 
Set $\alpha := b\restriction _{\T_{r_n}\setminus \T_{r_0}}\in \Aut(\T_{r_n}\setminus \T_{r_0})$. That is, $\alpha$ fixes all leaves of the component $M$, and equals to $b_x$ for every $x\in \LL M\setminus \{r_0\}$. Set $\xi$ to be the expression that appears on the right parenthesis in Eq. \ref{eq:R}, and $\eta$ to be the expression on the left. Indeed, $\xi \colon \T_{r_0}\to \T_{r_n}$ and $\eta \colon \T_{r_n}\to \T_{r_0}$. Lastly, set $b_{r_n}=\beta$ be the unique element provided in the lemma, satisfying
\begin{align*}
    b_{r_n}\restriction_{\T_{r_n}\setminus \T_{r_0}}&=\alpha \\
    b_{r_n}\restriction_{\T_{r_0}}&= \eta b_{r_n} \xi.
\end{align*}
Indeed, such a choice satisfies Eq. \ref{eq:R}. To finish the construction, observe again that Eqs. \ref{eq:R0},$\dots$,\ref{eq:Rn-1} can be solved one by one as above, and \ref{eq:I_x} is satisfied for all $x$ in $\LL T_1$.

By construction, $b$ fixes all leaves of $T_1$. Furthermore, it fixes the attracting components. It follows that indeed $b\in \Fix(T_1\cup T_2)$.
\end{proof}

\begin{corollary}\label{cor:open conj class}
	The conjugacy class of a hyperbolic element is always open inside the class $\Hyp$ of hyperbolic almost automorphisms.
	In particular, an almost automorphism has open conjugacy class if and only if it is hyperbolic with full support.
\end{corollary}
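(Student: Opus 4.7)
The plan is to combine Proposition \ref{prop:v and av are conj} with the fact that basic open neighbourhoods of $\AAutT$ are parametrized by tree pairs (Remark \ref{rem:tree pairs define topology}). Concretely, for a hyperbolic $g$, I would choose a revealing pair $P = \tp{g}{T_1}{T_2}$, guaranteed to exist by Lemma \ref{lem: constructing a revealing pair}, and let $U_P$ denote the basic open set of almost automorphisms having $P$ as a tree pair. Being a revealing pair is a combinatorial property of $[\overline{g},T_1,T_2]$, hence it is shared by every $h \in U_P$; thus Proposition \ref{prop:v and av are conj} applies to any hyperbolic $h \in U_P$ and yields $U_P \cap \Hyp \subseteq [g]$. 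This shows the conjugacy class is open in $\Hyp$.

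The forward direction of the ``in particular'' claim refines this argument. If $g$ is hyperbolic with $\supp(g) = \dT$, then $\eell(g) = \emptyset$, so Lemma \ref{lem:leaves of a revealing pair} implies that $P$ has no periodic chain. Since the types of maximal chains of $P$ are purely combinatorial data, every $h \in U_P$ also has $\eell(h) = \emptyset$. In particular every such $h$ is non-identity (the identity's tree pairs consist only of periodic chains) and satisfies $h|_{\eell(h)} = id$ vacuously, so $h \in \Hyp$. Therefore $U_P \subseteq \Hyp$, and by the first part $U_P \subseteq [g]$, proving that $[g]$ is open in $\AAutT$.

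For the converse I would argue the contrapositive. Since $\AAutT$ is non-discrete, $g = id$ does not have open conjugacy class, so assume $g \neq id$. If $g$ is not hyperbolic then $g_e \neq id$; if $g$ is hyperbolic with $\supp(g) \neq \dT$ then $\eell(g) \neq \emptyset$ while $g_e = id$. In either case one can pick a small ball $\dT_x \subseteq \eell(g)$ and a non-trivial elliptic $k \in \Fix(S)$ acting only on $\dT_x$, with $S$ chosen large enough that $gk$ lies in any pre-specified neighbourhood of $g$. The perturbation $k$ is arranged so that $\BOT((gk)_e) \neq \BOT(g_e)$; Proposition \ref{prop:conj_can_be_checked_on_ell_and_hyp_seperately} combined with Theorem \ref{thm:conjugacy_elliptic} then gives $gk \notin [g]$, contradicting openness.

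The main delicate point is the last step: verifying that one can always alter the boundary orbital type of the elliptic part by an arbitrarily small perturbation. This is transparent when $g_e = id$, since any non-trivial elliptic $k$ produces $\BOT(k) \neq \BOT(id)$; in the general case it follows from the freedom to prescribe orbital types provided by Lemma \ref{lemma:dk-type iff OT of AAutTdk}. Everything else reduces to the bookkeeping of EH decompositions and to the combinatorial invariance of revealing pairs under elements sharing a tree pair, which is straightforward once Proposition \ref{prop:v and av are conj} is available.
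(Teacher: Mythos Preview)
Your proposal is correct and follows essentially the same route as the paper. The first part and the forward implication of the ``in particular'' are identical to the paper's argument: pick a revealing pair, use Remark~\ref{rem:tree pairs define topology} to get a basic open set, observe that the absence of periodic chains (hence $\eell=\emptyset$) is a combinatorial property of the pair, and invoke Proposition~\ref{prop:v and av are conj}. For the converse the paper likewise perturbs the elliptic part inside $\eell(g)$ and appeals to Proposition~\ref{prop:conj_can_be_checked_on_ell_and_hyp_seperately}; the only cosmetic difference is that the paper phrases the perturbation as a sequence $a_n\to g_e$ (replacing $g_e$ rather than multiplying by your $k$) and makes the ``delicate point'' explicit via the dichotomy \emph{infinite orbit vs.\ finite orbits}: if $g_e$ has an infinite boundary orbit, approximate it by elliptic Higman--Thompson elements (necessarily of finite order), otherwise approximate by an elliptic element with an infinite orbit. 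This is exactly the content you defer to Lemma~\ref{lemma:dk-type iff OT of AAutTdk}.
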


\begin{proof}
    Let $g$ be a hyperbolic almost automorphism. Let $P = \tp{g}{T_1}{T_2}$ be a revealing tree pair associated to $g$.
    Consider the open neighborhood $U$ of $g$ consisting of all elements $f \in \AAutT$ such that $P$ is a tree pair associated to $f$.
    By Lemma \ref{lem:leaves of a revealing pair}, $U\cap \Hyp$ contains only elements $f\in U$ that are trivial on $\eell(g)$. Proposition \ref{prop:v and av are conj} implies that all such elements are conjugate to $g$.
    
    Now we prove the second part of the corollary.
    Indeed in case $g$ is hyperbolic with full support, it is obvious that all elements in $U$ are also hyperbolic with full support and, by Proposition \ref{prop:v and av are conj} they are conjugate to $g$.
    On the other hand, assume that $\eell(g)$ is non-empty.
    Recall the EH-decomposition, $g=g_eg_h$, from Definition \ref{def:EH decomposition}.
    It is, using Lemma \ref{lemma:dk-type iff OT of AAutTdk}, not difficult to find a sequence $(a_n)_n \to g_e$ of elliptic almost automorphisms such that $\supp(a_n) \subset \eell(g)$ for every $n$, but $a_n$ is not conjugate to $g_e$ for any $n$. For example, if $g_e$ has an infinite orbit on $\dT$ one can take all $a_n$ to be Higman--Thompson elements, which will force them to have only finite orbits; if $g_e$ has only finite orbits, one can take $a_n$ to have an infinite orbit.
    Clearly $a_n = (g_h a_n)_e$ and $(g_h a_n) \to g$.
    By Proposition \ref{prop:conj_can_be_checked_on_ell_and_hyp_seperately} none of $g_h a_n$ is conjugate to $g$, so the conjugacy class of $g$ is not open.
    \end{proof}

\begin{lemma}\label{lem: conj_into_V_by_V}
	Let $v \in V_{d,k}$ be induced by a tree pair $\tp{v}{T_1}{T_2}$
	and let $a,b \in \Fix(T_{1}\cap T_{2})$ be such that $b v a \in V_{d,k}$.
	Then, there exist $\widetilde{a}, \widetilde{b} \in V_{d,k} \cap \Fix(T_{1}\cap T_{2})$ such that $b v a = \widetilde{b} v \widetilde{a}$.
	Furthermore $\widetilde{a}$ and $\widetilde{b}$ can be chosen such that $\supp(\widetilde{a})\subseteq \supp(a)$ and $\supp(\widetilde{b})\subseteq \supp(b)$.
\end{lemma}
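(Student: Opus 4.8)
The plan is to reduce to the Higman--Thompson world leaf by leaf, by analyzing how $a$ and $b$ act below the leaves of $T_1$ and $T_2$, and then replacing the "bad" (non-order-preserving) parts of $a,b$ by their order-preserving counterparts. Since $v$ is induced by $\tp{v}{T_1}{T_2}$, it is represented by the unique plane-order-preserving forest isomorphism $\foris{\varphi}{T_1}{T_2}$ with $\varphi|_{\LL T_1}=\kappa$. We may assume (by enlarging $T_1$, $T_2$ and the defining trees of $a$ and $b$, all within $\Fix(T_1\cap T_2)$, which changes nothing) that $a$ is represented by a forest isomorphism with domain some complete tree $S_a \supseteq T_2$, $b$ is represented by one with range some complete tree $S_b\supseteq T_1$, and that the trees are large enough that the composition $bva$ is realized by composing these representatives as honest forest isomorphisms. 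For every leaf $x\in\LL T_1$ we have $v|_{\T_x}=J_{x,\varphi(x)}$, so after these enlargements $a$ restricted below $\varphi(x)$ and $b$ restricted below $x$ are described by finite data: a bijection among the relevant balls together with, below each ball, a plane order preserving isomorphism twisted by a local permutation. The point is that the "locally order-preserving" condition only constrains these local permutations at finitely many vertices.

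The key step is then the following local replacement. Write $bva$ on each ball $\dT_{\varphi(x)}$ (for $x\in\LL T_1$) as $b_x\circ J_{x,\varphi(x)}\circ a_{\varphi(x)}$ where $a_{\varphi(x)}\in\Aut(\T_{\varphi(x)})$ and $b_x\in\Aut(\T_x)$ are the appropriate restrictions. Since $bva$ is locally order-preserving and $J_{x,\varphi(x)}$ already is, the only obstruction to $a_{\varphi(x)}$ and $b_x$ being locally order-preserving lies in finitely many vertices — precisely those vertices of $\T_{\varphi(x)}$ inside the tree $p_{S_a}$ realizing $a$ and those of $\T_x$ inside $p_{S_b}$ realizing $b$. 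At each such vertex the discrepancy from order-preservation is a permutation $\sigma$ of the $d$ children, and the composition being order-preserving forces these discrepancies to cancel in matched pairs along each strand. I would therefore define $\widetilde a$ by keeping the combinatorial (ball-permuting) part of $a$ but replacing each local twisting permutation by the identity, i.e.\ $\widetilde a:= a\cdot\tau_a$ where $\tau_a\in\Fix(T_1\cap T_2)$ is the automorphism supported on $\bigsqcup_x\T_{\varphi(x)}$ that undoes all the local twists of $a$; symmetrically $\widetilde b := \tau_b\cdot b$. Then $\widetilde a,\widetilde b\in V_{d,k}$ by construction, $\widetilde a,\widetilde b\in\Fix(T_1\cap T_2)$ since $\tau_a,\tau_b$ are, and $\supp(\widetilde a)\subseteq\supp(a)$, $\supp(\widetilde b)\subseteq\supp(b)$ because $\tau_a,\tau_b$ are supported inside $\supp(a)$, $\supp(b)$ respectively (a vertex is twisted by $a$ only if it is moved or has a moved descendant).

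It remains to check $\widetilde b\,v\,\widetilde a = bva$. We have $\widetilde b\,v\,\widetilde a = \tau_b\, b\, v\, a\, \tau_a = \tau_b\,(bva)\,\tau_a$. The claim is that $\tau_b$ and $\tau_a$ exactly compensate for the fact that $bva$ is already order-preserving while $\widetilde b v\widetilde a$ "should" be: more precisely, one checks on each ball that the local twisting permutation of $bva$ at each relevant vertex equals the composite of the corresponding twist of $b$ and the (transported) twist of $a$, so that conjugating $bva$ by $\tau_a$ on the right and $\tau_b$ on the left — which inserts exactly those twists back — reproduces $bva$ itself. Equivalently: $\widetilde bv\widetilde a$ and $bva$ induce the same bijection on balls and both are locally order-preserving with respect to the same tree pair, hence are equal by uniqueness of the plane-order-preserving representative. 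I expect the main obstacle to be purely bookkeeping: setting up consistent notation for "the local twisting permutations of $a$ below $\varphi(x)$ and of $b$ below $x$" across all leaves simultaneously and verifying the cancellation is a finite but fiddly computation; conceptually there is nothing deep, it is the observation that membership in $V_{d,k}$ depends only on finitely many local orders, all of which can be corrected without changing the global bijection on the boundary.
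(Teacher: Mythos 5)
Your overall route is the paper's route: replace $a$ and $b$ by locally order-preserving elements $\widetilde a,\widetilde b$ carrying the same leaf-bijection data on a sufficiently large tree pair, and conclude $bva=\widetilde b v\widetilde a$ because both are plane-order-preserving representatives of the same tree pair (the paper takes $\widetilde a$ to be the Higman--Thompson element induced by $\tp{a}{a^{-1}(T_1^+)}{T_1^+}$ and $\widetilde b$ the one induced by $\tp{b}{T_2^+}{b(T_2^+)}$, for $T_1^+\supseteq T_1$ large enough that $\tp{bva}{a^{-1}(T_1^+)}{b(T_2^+)}$ \emph{induces} $bva$). Your closing ``equivalently'' sentence is exactly this uniqueness argument and is the part of your write-up that actually carries the proof.

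However, the paragraph you flag as ``the key step'' rests on a false premise: you assert that the obstruction to $a$ and $b$ being locally order-preserving ``lies in finitely many vertices'' and that ``membership in $V_{d,k}$ depends only on finitely many local orders, all of which can be corrected.'' The hypothesis is only that $a,b\in\Fix(T_1\cap T_2)\leq\Aut(\T)$; each of them separately may fail to be order-preserving at \emph{infinitely} many vertices (e.g.\ $v=\mathrm{id}$, $a$ the automorphism of $\T_{2,2}$ flipping the two children at every vertex, $b=a^{-1}$ satisfies $bva=\mathrm{id}\in V$). Only the composite $bva$ is guaranteed to have finitely many twisted vertices, and the ``matched cancellation along strands'' you invoke therefore involves infinitely many matched pairs. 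This does not sink the construction, because the correct definition of $\widetilde a$ never looks at $a$'s behaviour below the leaves at all: take the plane-order-preserving extension of the leaf bijection of a tree pair for $a$, so that $\tau_a=a^{-1}\widetilde a$ is an honest tree automorphism (not in general in $V_{d,k}$, and undoing possibly infinitely many twists), while $\widetilde a\in V_{d,k}$ by construction. With that rewording, your support and $\Fix(T_1\cap T_2)$ verifications and the final uniqueness argument go through; as written, the finiteness claim and the claim that $\tau_a$ is supported only where ``$a$ is twisted at finitely many vertices'' would need to be deleted rather than repaired.
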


\begin{proof}
	Since $b v a \in V_{d,k}$ there exist finite complete subtrees $T_i^+$ containing $T_i$ such that the tree pair $\tp{bva}{a^{-1}(T_1^+)}{b(T_2^+)}$ induces $b v a$. Let $\widetilde{a}$ be induced by the tree pair $\tp{a}{a^{-1}(T_1^+)}{T_1^+}$ and $\widetilde{b}$ by the tree pair $\tp{b}{T_2^+}{b(T_2^+)}$.
	Note that $\tp{v}{T_1^+}{T_2^+}$ induces $v$.
	Then, clearly both $b v a$ and $\widetilde{b} v \widetilde{a}$ admit $\tp{bva}{a^{-1}(T_1^+)}{b(T_2^+)}$ as an associated tree pair, so they have to be the same element of $V_{d,k}$.
	
	Note that $\widetilde{a}$ and $\widetilde{b}$ were constructed such that $\supp(\widetilde{a})\subseteq \supp(a)$ and $\supp(\widetilde{b})\subseteq \supp(b)$.
\end{proof}

\begin{remark}
	It is in general not true that if $a=b^{-1}$ then also $\widetilde{a}$ can be chosen to equal $\widetilde{b}^{-1}$.
	That is, $V_{d,k}$ elements which are conjugate inside $\AAutT$ are not necessarily conjugate in $V_{d,k}$. 
	Otherwise Proposition \ref{prop:v and av are conj} would contradict Theorem \ref{thm:belkmatucci} by Belk and Matucci,
	as illustrated by the example in Fig. \ref{fig:conjugate in N but not in V}.
\begin{figure}%
    \centering
    \subfloat[a revealing pair for an element $v \in V$]{\includegraphics[scale=0.8]{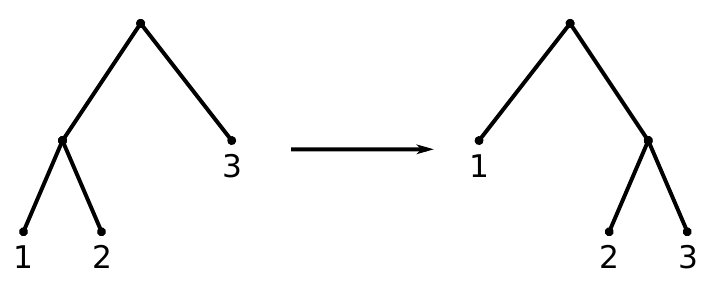}}
    \qquad
    \subfloat[its reduced BM-diagram]{\includegraphics[scale=0.8]{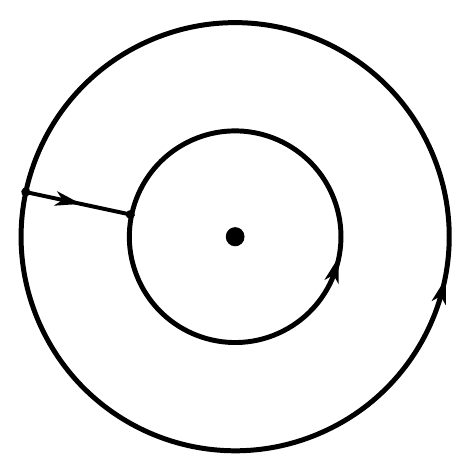}}

    \subfloat[a revealing pair for $av$]{\includegraphics[scale=0.8]{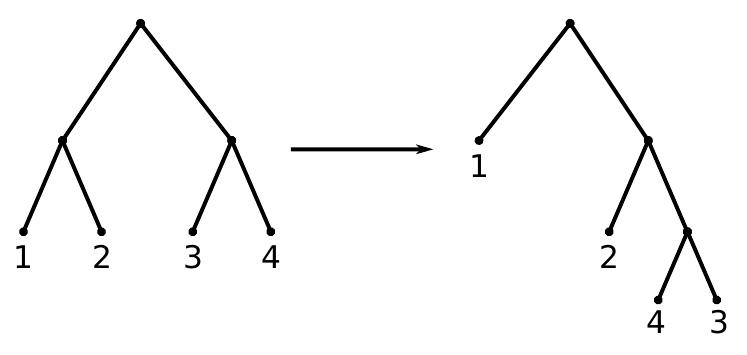}}
    \qquad
    \subfloat[its reduced BM-diagram]{\includegraphics[scale=0.8]{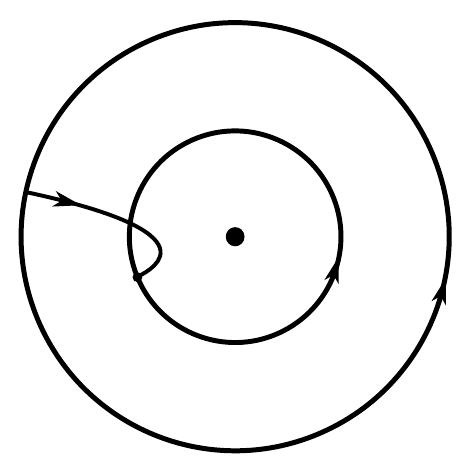}}
    \caption{The elements are not conjugate in Thompson's $V$ (Theorem \ref{thm:belkmatucci}), but they are conjugate in $\AAut(\T_{2,2})$ (Theorem \ref{thm:hyperblolic_conjugacy}).} 
\label{fig:conjugate in N but not in V}
\end{figure}
\end{remark}

\subsection{Going to diagrams and releasing rotation}

In the current subsection we complete the proof of Theorem \ref{thm:hyperblolic_conjugacy}.
For the following lemma, recall that a Higman--Thompson element $v$ is induced by a tree pair $P=[\kappa,T_1,T_2]$ if it is represented by the unique plane order preserving forest isomorphism $\foris{\phi}{T_1}{T_2}$ with $\phi|_{\LL T_1}=\kappa$. This is a stronger condition than to simply say that $P$ is a tree pair associated to $v$.

\begin{lemma}\label{lem: fixator elements change only the rotation system}
   Let $v\in V_{d,k}$ be a hyperbolic element and let $P=\tp{v}{T_1}{T_2}$ be a revealing tree pair inducing $v$. Let $a,b\in \Fix(T_1\cap T_2)$ be such that $a|_{\eell (v)}=b|_{\eell (v)}=id $, and suppose that $bva^{-1}\in V_{d,k}$. 
   Let $Q$ be a tree pair inducing $bva^{-1}$. Then, the *-reduced BM-diagrams of $P$ and $Q$ are isomorphic up to rotation.
\end{lemma}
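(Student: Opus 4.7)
The first move is to apply Lemma \ref{lem: conj_into_V_by_V} to the product $bva^{-1}\in V_{d,k}$, using that $v$ is induced by $P$ and that $a^{-1},b\in\Fix(L)$ where $L:=T_1\cap T_2$. This allows me to replace $a$ and $b$ by Higman--Thompson elements in $V_{d,k}\cap \Fix(L)$ with the same supports. Since the original $a,b$ acted trivially on $\eell(v)$ and the new elements are supported inside the old supports, the new $a,b$ still vanish on $\eell(v)$; hence their supports lie inside $\bigsqcup_x \dT_x$ for $x$ ranging over the non-neutral leaves of $L$ (the roots of components of $T_1\setminus T_2$ and of $T_2\setminus T_1$), and $a|_{\dT_x}$, $b|_{\dT_x}$ are Higman--Thompson elements of $V(\T_x)\cong V_{d,d}$ with tree pairs living entirely inside $\T_x$.

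Using these local tree pairs (together with the trivial pair on neutral leaves) I assemble tree pairs $[\alpha,L^a_1,L^a_2]$ for $a$ and $[\beta,L^b_1,L^b_2]$ for $b$ that fix $L$ pointwise. I then enlarge $P$ by successive rollings (Lemma \ref{lem: constructing a revealing pair}) to a \emph{revealing} tree pair $\tilde P=[\tilde\kappa,\tilde T_1,\tilde T_2]$ for $v$ satisfying $\tilde T_1\supseteq L^a_1$ and $\tilde T_2\supseteq L^b_2$, and correspondingly enlarge the tree pairs of $a$ and $b$ so that $a$'s target equals $\tilde T_1$ and $b$'s source equals $\tilde T_2$. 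Composing these three now-compatible tree pairs produces a particular tree pair $Q^*=[\mu,\tilde T_1^a,\tilde T_2^b]$ for $bva^{-1}$. Since *-reduced BM-diagrams depend only on the element they represent (Theorem \ref{thm:belkmatucci} combined with Lemma \ref{lem:reductions_do_not_destroy_reducedness}), proving the conclusion for the pair $(\tilde P,Q^*)$ is enough.

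I now compare the basic BM-diagrams $D_{\tilde P}$ and $D_{Q^*}$. The effect of the multiplications by $a^{-1}$ and $b$ is that $D_{Q^*}$ is obtained from $D_{\tilde P}$ by cutting it open along the edges passing through each non-neutral leaf of $L$ and inserting, on the source side, the local open strand diagram of $a^{-1}$, and, on the target side, that of $b$. Each insertion creates an hourglass, and Type II reducing all of them recovers the underlying directed graph and cutting class of $D_{\tilde P}$. This yields a graph isomorphism $F$ between $D_{\tilde P}$ and the partial Type II reduction of $D_{Q^*}$ that preserves the cutting class. Because $\tilde P$ is revealing, $D_{\tilde P}$ is II-reduced (Lemma \ref{lem:basic_BM_diagram_of_revealing_pair_II-reduced}), so the only hourglasses left in $D_{Q^*}$ are the inserted ones; at every pair of correlated vertices in such an hourglass, the rotation mismatch of $F$ is precisely the \emph{same} local $V_{d,d}$-permutation prescribed by $a$ or $b$, which is the common $\sigma$ required by the definition preceding Lemma \ref{lem: diff_in_rot_system_preserved_under_II-reduction}. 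Hence $F$ respects every hourglass, and Corollary \ref{cor: diff_in_rot_system_preserved_under_reduction} produces the desired isomorphism up to rotation of the *-reductions. The principal obstacle is the combinatorial bookkeeping: one has to arrange the iterated rollings so that $\tilde P$ stays revealing while simultaneously accommodating all of $L^a_1,L^a_2,L^b_1,L^b_2$, and one has to verify that the local $V_{d,d}$-permutation is actually shared across the two correlated sides of every inserted hourglass---a consistency that comes, in the end, from $a$ and $b$ being locally plane-order-preserving.
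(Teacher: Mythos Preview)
The central step of your plan—the claim that $D_{Q^*}$ is obtained from $D_{\tilde P}$ by inserting, at each non-neutral leaf of $L$, a local open strand diagram that ``creates an hourglass'' whose Type~II reduction recovers the underlying graph and cutting class of $D_{\tilde P}$—does not hold in general, and this is where the argument breaks. The local tree pair of a Higman--Thompson element $a$ (or $b$) at a leaf $x$ has source and target trees that need not be isomorphic as rooted trees. Consequently $\tilde T_1^a$ and $\tilde T_1$ (respectively $\tilde T_2^b$ and $\tilde T_2$) are typically \emph{not} isomorphic as directed graphs, so there is no graph isomorphism $F\colon D_{\tilde P}\to(\text{partial Type~II reduction of }D_{Q^*})$ on which to invoke Corollary~\ref{cor: diff_in_rot_system_preserved_under_reduction}. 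Put differently, the inserted local strand diagram is not an hourglass in the paper's sense (which requires the merge tree and the split tree to be mirror copies of one another); your picture goes through only in the very special situation where $a$ and $b$ merely permute the subtrees hanging at the relevant leaves without reshaping them. A smaller inaccuracy: the supports of $a$ and $b$ need not lie only below the \emph{non-neutral} leaves of $L$; only the \emph{periodic} neutral leaves make up $\operatorname{St}(v)$, and non-periodic neutral leaves can carry support.

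This gap is exactly what the paper's proof is engineered to handle. The paper first separates the two sides by reducing to $a=\mathrm{id}$ (the case $b=\mathrm{id}$ being symmetric). It then isolates a genuinely easy base case (Step~1): when $b$ is induced by $[\bar b,\,T_2,\,b(T_2)]$, the map $b$ itself furnishes a graph isomorphism $T_2\to b(T_2)$ and hence between the two ``modified'' basic BM-diagrams (only the root edge reduced); since $b\in\Fix(T_1\cap T_2)$ this isomorphism is the identity on the hourglass $T_1\cap T_2$, so it respects it trivially, and Lemma~\ref{lem: diff_in_rot_system_preserved_under_II-reduction} applies. The general $b$ is then reached by an induction (Step~2): one enlarges the revealing pair via rollings with cancelling trees, and at each stage compares two Higman--Thompson elements that differ only by an element in the ``simple'' form of Step~1. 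Your attempt to carry out the comparison in one shot presupposes precisely the graph isomorphism that this induction is needed to produce.
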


\begin{proof}
By Lemma \ref{lem: conj_into_V_by_V} we can assume that $a,b\in V_{d,k}$. We further assume that $a=id$. The case $b=id$ works completely analogously, and clearly the lemma follows from putting together those two cases.
    Recall from Theorem \ref{thm:belkmatucci} that the reduced BM-diagrams of $P$ and $Q$ only depend on $v$ and $bva^{-1}$. Thus we can assume that the tree pair $Q=\tp{bv}{T_1^+}{T_2^+}$ satisfies $T_1 \subset T_1^+$.
    
    \emph{Step 1:} We assume first that $b$ is induced by the tree pair $\tp{b}{T_2}{b(T_2)}$.
    
    Let $D$ be the following modification of a basic BM-diagram of $P$, see Definition \ref{def:basic BM-diagram}: Instead of doing a Type II reduction on $T_1 \cap T_2$ in Step 6, we only do a Type II reduction
    on the edge connecting the root of $T_2$ to the root of $T_1$. Do the same with $P_b := \tp{bv}{T_1}{b(T_2)}$
    to obtain a BM-diagram $D_b$.
    Clearly $b$ induces an isomorphism from $D$ to $D_b$ respecting all hourglasses. So by Lemma \ref{lem: diff_in_rot_system_preserved_under_II-reduction} and Lemma \ref{lem:basic_BM_diagram_of_revealing_pair_II-reduced} the *-reduced BM-diagrams of $P$ and $P_b$ are isomorphic up to rotation.
    
    \emph{Step 2:}
    For an induction proof, set $T_1^{(0)} := T_1$ and $T_2^{(0)} := T_2$.
    For $i \geq 0$ let $S^{(i)}$ be a cancelling tree of the tree pair $P^{(i)}_b := \tp{bv}{T_1^{(i)}}{b(T_2^{(i)})}$ that is intersecting $T_1^+ \cup T_2^+$ non-trivially. 
    Let $P_b^{(i+1)}:=\tp{bv}{T_1^{(i+1)}}{b(T_2^{(i+1)})}$ be a $bv$-rolling of $P_b^{(i)}$. As usual we mean a forward rolling except in the case of a repeller chain.
    Since $T_1^+ \cup T_2^+$ is finite, there exists an $i_0 \geq 0$ such that $T_1^{(i_0)} \supset T_1^+$, and so the process of defining new tree pairs stops.
    Then $bv$ is induced by the tree pair $P_b^{(i_0)}$ and by Theorem \ref{thm:belkmatucci} the reduced BM-diagrams of $P_b^{(i_0)}$ and $Q$ are isomorphic. We will show that for each $0 \leq i < i_0$ the reduced BM-diagram of 
    $P_b^{(i)}$ is, up to rotation, isomorphic to the reduced BM-diagram of $P_b^{(i+1)}$.
    
    We define yet another tree pair. Let $v_i$ be the Higman--Thompson element induced by $P_b^{i}$ and let $P^{(i+1)}$ be the $v_i$-rolling of $P_b^{(i)}$ with $S^{(i)}$.
    By Theorem \ref{thm:belkmatucci} the reduced BM-diagram of $P^{(i+1)}$ is isomorphic to the reduced BM-diagram of $P_b^{(i)}$ because those tree pairs induce the same Higman--Thompson element $v_i$.
    Now note that setting $b_{i+1}$ to be the Higman--Thompson element induced by $\tp{b}{T_2^{(i+1)}}{b(T_2^{(i+1)})}$, we are in the situation of Step 1 with $v$ replaced by $v_i$, $P$ replaced by $P^{(i+1)}$, $b$ replaced by $b_{i+1}b_i^{-1}$ and $Q$ replaced by $P_b^{(i+1)}$. So using Step 1 we deduce that the reduced BM-diagram of $P^{(i+1)}$ is isomorphic up to rotation to the reduced BM-diagram of $P_b^{(i+1)}$. This finishes the proof.	\end{proof}

\begin{proposition}
\label{prop: diagram_differ_in_rot_sys_means_thompson_elements_differ_little}
	Let $D_1$,$D_2$ be two reduced BM-diagrams of degree $d$ that are isomorphic up to rotation, and such that for some $k\leq d-1$ they both admit a $k$-admissible cutting class.
	Then there exist revealing tree pairs $P_i=\tp{v_i}{T_1^i}{T_2^i}$ for $i=1,2$ such that $D_i$ is the basic BM-diagram of $P_i$ and such that the Higman--Thompson elements $v_1$ and $v_2$ induced by $P_1$ and $P_2$ satisfy $v_2 = av_1$ for some $a \in \Fix(T^1_1 \cap T^1_2)$. 
	
	In particular, $v_1$ and $v_2$ are conjugate.
\end{proposition}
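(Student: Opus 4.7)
The plan is to apply the construction in the proof of Proposition \ref{prop: a reduced BM-diagram comes from a revealing pair} twice in a coordinated fashion, using the isomorphism up to rotation $F\colon D_1 \to D_2$ to synchronise the two runs, and then to show that the resulting Higman--Thompson elements $v_1, v_2$ differ by a tree automorphism fixing the common auxiliary tree. Pick a $k$-admissible representative $\gamma_1$ of $c_1$ satisfying the three conclusions of Lemma \ref{lem:cutclassrep}, and set $\gamma_2 := \gamma_1 \circ F^{-1}$. Since $F$ preserves the graph structure and cycle-values, $\gamma_2$ is a $k$-admissible representative of $c_2$ with the same properties; in particular, $n := \sum_e \gamma_1(e) = \sum_e \gamma_2(e)$. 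Fix a finite complete subtree $T \subset \Tdk$ with $n$ leaves, label its leaves by the cut points of $D_1$, and transport the labelling via $F$ to obtain a matched labelling by the cut points of $D_2$.

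Running the construction with these matched data produces revealing pairs $P_1 = [\kappa_1, T_1^1, T_2^1]$ and $P_2 = [\kappa_2, T_1^2, T_2^2]$ whose basic BM-diagrams are $D_1$ and $D_2$ respectively, with $T_1^i \cap T_2^i = T$. The differing rotation systems cause the split- and merge-carets hanging below the leaves of $T$ to embed differently into $\Tdk$, so in general $T_1^1 \ne T_1^2$ and $T_2^1 \ne T_2^2$. Form the complete finite subtrees $\hat T_1 := T_1^1 \cup T_1^2$ and $\hat T_2 := T_2^1 \cup T_2^2$ of $\Tdk$. Using the matched construction one refines $P_1$ and $P_2$ to tree pairs $\hat P_1 = [\hat\kappa_1, \hat T_1, \hat T_2]$ and $\hat P_2 = [\hat\kappa_2, \hat T_1, \hat T_2]$ associated to $v_1$ and $v_2$ respectively, sharing the same pair of trees.

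Since $\hat P_1$ and $\hat P_2$ have the same pair of trees, $v_2 v_1^{-1}$ is induced by the tree pair $[\hat\kappa_2 \hat\kappa_1^{-1}, \hat T_2, \hat T_2]$ and is therefore a tree automorphism $a$ of $\T$ fixing $\hat T_2$ pointwise. In particular $a \in \Fix(\hat T_2) \subseteq \Fix(T_2^1) \subseteq \Fix(T_1^1 \cap T_2^1)$, and $v_2 = a v_1$. Free loops of $D_1$ correspond to periodic maximal chains of $P_1$, on which the rotation system is vacuous; the matched labelling forces $\hat\kappa_1$ and $\hat\kappa_2$ to agree there, so $a$ acts trivially on $\eell(v_1)$, and Proposition \ref{prop:v and av are conj} then yields the conjugacy of $v_1$ and $v_2$. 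The main technical difficulty is to establish that the refinements of $P_1, P_2$ can be arranged to share the same pair of trees $\hat T_1, \hat T_2$: one must verify that for each leaf $x \in \LL T$ at which the local rotations of $D_1$ and $D_2$ disagree, the extra carets added to extend $T_1^1$ to $T_1^2$ (and vice versa) are mapped by the plane-preserving representative of $v_1$ (respectively $v_2$) to precisely the analogous extra carets on the $\hat T_2$-side. This requires carefully unpacking the plane-preserving embedding of the abstract split- and merge-carets of $D$ into $\Tdk$ under the two rotation systems, using the identification provided by $F$.
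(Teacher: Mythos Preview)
Your first paragraph---running the construction of Proposition~\ref{prop: a reduced BM-diagram comes from a revealing pair} twice with matched data $\gamma_2=\gamma_1\circ F^{-1}$ and a common labelled tree $T$---is exactly the paper's opening move, and it correctly yields revealing pairs $P_1,P_2$ with $T_1^1\cap T_2^1=T_1^2\cap T_2^2=T$. After that point you and the paper diverge sharply.

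The paper does \emph{not} attempt to refine $P_1$ and $P_2$ to a common pair of trees. It simply observes that $F$ extends to an isomorphism of the abstract (unordered) tree pairs fixing $T$ pointwise, and that such an isomorphism is realised by a tree automorphism in $\Fix(T)$: for each leaf $x\in\LL T$ one extends the component-wise bijection between the pieces of $T_2^1\setminus T$ and $T_2^2\setminus T$ (and likewise on the $T_1$-side) to an automorphism of $\T_x$. This immediately yields that $v_1$ and $v_2$ differ by multiplication with elements of $\Fix(T)$, and Proposition~\ref{prop:v and av are conj} finishes the conjugacy claim. No refinement, no unions $\hat T_i$, no compatibility check is needed.

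Your route through $\hat T_1=T_1^1\cup T_1^2$ and $\hat T_2=T_2^1\cup T_2^2$ runs into a genuine obstruction at precisely the step you flag. For $v_1$ to admit a tree pair on $(\hat T_1,\hat T_2)$ you would need $\phi_1(\hat T_1)=\hat T_2$, in particular $\phi_1(T_1^2\setminus T_1^1)\subset T_2^2$. But $\phi_1$ is plane-order-preserving for the ambient order of $\Tdk$, which on the components of $T_1^1\setminus T$ encodes $r_1$, whereas the carets of $T_1^2\setminus T$ and $T_2^2\setminus T$ are embedded in $\Tdk$ according to the \emph{other} rotation system $r_2$. There is no mechanism forcing these to be compatible, and in small examples they are not: $\phi_1$ will typically carry the extra carets of $T_1^2$ to vertices that lie outside $T_2^2$ altogether. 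So the common refinement you posit generally does not exist, and the argument does not close. The paper's direct realisation of the tree-pair isomorphism by an element of $\Fix(T)$ sidesteps this difficulty entirely and is the cleaner path.
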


\begin{proof}
    By Proposition \ref{prop: a reduced BM-diagram comes from a revealing pair}, there exist tree pairs $P_i=[\kappa_i,T_1^i,T_2^i]$, $i=1,2$, such that $D_i$ is the basic BM-diagram of $P_i$.
    Let $v_i$ be the Higman--Thompson element induced by $P_i$.
    For $i=1,2$ the vertices in the intersection $T_1^i\cap T_2^i$ form an hourglass in the BM-diagram generated by $P_i$ and therefore they 
    are not seen in the reduced diagram $D_i$.
    However, the isomorphism between $D_1$ and $D_2$ implies that $|\LL(T_1^1\cap T_2^1)|=|\LL(T_1^2\cap T_2^2)|$. 
    Let $P_2'$ be the tree pair obtained by replacing $T_1^2\cap T_2^2$ in $P_2$ by $T_1^1\cap T_2^1$.
    As the hourglass corresponding to $T_1^1\cap T_2^1$ is anyway subject to a Type II reduction, $P_2'$ has the same basic BM-diagram as $P_2$.
    It follows that without loss of generality we can assume that $P_2=P_2'$, namely, that $T_1^1\cap T_2^1=T_1^2\cap T_2^2$.
    
    Extend the isomorphism between $D_1$ and $D_2$ to an isomorphism between the tree pairs $P_1$ and $P_2$ that maps $T_1^1\cap T_2^1$ identically on $T_1^2\cap T_2^2$.
    Let $v_1$ and $v_2$ be the Higman--Thompson elements induced by $P_1$ and $P_2$ respectively.
    The isomorphism between $P_1$ and $P_2$ can be realized as the multiplication of $v_1$ by an element $a\in\Fix (T_1^1\cap T_2^1)$, and so the first part of the Proposition is proved.
    
    For the "in particular" part, recall that by Proposition \ref{prop:v and av are conj}, $av_1$ and $v_1$ are conjugate. 
\end{proof}

\begin{proof}[Proof of Theorem \ref{thm:hyperblolic_conjugacy}]
	By Proposition \ref{prop:v and av are conj}
	and Propsition \ref{lem: fixator elements change only the rotation system}
	we can assume that $g$ and $h$ are elements of $V_{d,k}$ without changing conjugacy classes or rotation systems of BM-diagrams.
	
	We first prove the "if"-direction. Let $D,D'$ be the reduced BM-diagrams of $g,h$ and assume that $D$ and $D'$ are isomorphic up to rotation.
	By Proposition \ref{prop: diagram_differ_in_rot_sys_means_thompson_elements_differ_little} there exist conjugate elements $v,v'$ with BM-diagrams $D,D'$. By Theorem \ref{thm:belkmatucci}, $v$ is conjugate to $g$ and $v'$ to $h$. Thus $g$ and $h$ are conjugate.
	
	Now we proof the "only if"-direction. Let $g,h$ be conjugate.
	Let $D,D'$ be their *-reduced CADSs from revealing pairs.
	By Proposition \ref{prop: diagram_differ_in_rot_sys_means_thompson_elements_differ_little} there exist elements $v,v'$ in $V_{d,k}$ that have *-reduced BM-diagrams $D$ and $D'$, and such that they differ only by an element in the fixator of the intersection of the trees of a revealing pair. By Proposition \ref{lem: fixator elements change only the rotation system} this implies that $D$ and $D'$ are isomorphic up to rotation.
\end{proof}

\begin{remark}
	This gives us the following procedure to determine the conjugacy class of a hyperbolic element.
	\begin{enumerate}
		\item Find a revealing pair representing $h$ (see Lemma \ref{lem: constructing a revealing pair}).
		\item Form the basic BM-diagram of the revealing pair (see Section \ref{from tree pairs to strand diagrams}).
		\item *-reduce the BM-diagram and forget the rotation system.
	\end{enumerate}
\end{remark}

We expect that the first step could be omitted by doing reductions similar to those considered by Aroca (see Definitions 3.9 and 3.10 in \cite{aro18}), but we decided not to pursue this idea further.

\subsection{Reading off the dymanics}\label{subsec: reading off dynamics}

In this section we explain how to read off dynamics from a *-reduced BM-diagram. This is a generalization of Theorem 5.2 and Corollary 5.3 in \cite{bema14}.
In addition we investigate when a hyperbolic element can be conjugated into $\AutT$.

\begin{theorem}\label{thm:read off dynamics}
	Let $g \in \AAutTdk$ and let $(D,r,c)$ be the *-reduced BM-diagram of a revealing pair associated to $g$.
	\begin{enumerate}
		\item Every merge loop $\mu$ with $n$ merges corresponds to an attracting point of $g$ of attracting length $n$ and period $c(\mu)$.
		\item Every split loop $\sigma$ with $n'$ splits corresponds to a repelling point of $g$ of repelling length $n'$ and period $c(\sigma)$.
		\item Every connected component of $D$ corresponds to a clopen $g$-invariant subset of $\dT$.
		\item Let $\lambda_1,\dots,\lambda_n$ be the free loops in $D$, then the number of balls $\eell(g)$ can be partitioned into is congruent to $c(\lambda_1)+\dots+c(\lambda_n) \mod{d-1}$.
	\end{enumerate}
\end{theorem}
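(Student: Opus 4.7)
The plan is to work entirely inside a revealing pair. Using Lemma \ref{lem: constructing a revealing pair}, fix a revealing pair $P = \tp{g}{T_1}{T_2}$ for $g$ and let $\foris{\phi}{T_1}{T_2}$ be the corresponding representative. Let $D_0$ denote the basic BM-diagram of $P$ and $(D,r,c)$ its *-reduction. The proof combines two ingredients: the dynamical interpretation of chains in $P$ provided by Lemma \ref{lem:leaves of a revealing pair}, and the explicit dictionary between chains and loops spelled out in the remark following Proposition \ref{prop: a reduced BM-diagram comes from a revealing pair}. That dictionary says: an attractor chain of attracting length $\alpha$ and period $\mu$ produces a merge loop of $D_0$ with $\alpha$ merges and cutting class value $\mu$, with analogous statements for repeller chains versus split loops and periodic chains versus free loops.

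For (1) and (2), the first observation is that merge and split loops survive every reduction untouched. A Type II reduction is performed on an edge whose origin is a merge and whose terminus is a split, so such an edge cannot lie on a pure merge or split loop; a Type I* reduction acts only on parallel split-to-merge edges; and Type III reductions act only on free loops. Hence the merge (resp.\ split) loops of $(D,r,c)$ are in length- and cutting-class-preserving bijection with those of $D_0$. Combining this with the cited remark and with items (3) and (4) of Lemma \ref{lem:leaves of a revealing pair}, which translate each attractor (resp.\ repeller) chain into an attracting (resp.\ repelling) boundary point of the same attracting (resp.\ repelling) length and period, yields (1) and (2). For (3), unwinding Definition \ref{def:basic BM-diagram} shows that each connected component of $D_0$ is the sub-diagram associated to a subset $S$ of leaves of $T_1$ that is closed under both the partial action of $\overline{g}$ and under ``belonging to the same attracting or repelling component''. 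The clopen set $U := \bigsqcup_{v \in S} \dT_v$ is then $g$-invariant, because $g$ coincides with $\phi$ off $T_1$ and sends each such $\dT_v$ into $\dT_{\phi(v)} \subset U$. Reductions merge edges but preserve connected components, so the claim descends to $(D,r,c)$.

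For (4), Lemma \ref{lem:leaves of a revealing pair} gives the canonical partition $\eell(g) = \bigsqcup_{v\ \text{periodic}} \dT_v$, whose cardinality equals the total number of periodic leaves of $P$. By the same remark, each periodic chain has length equal to the cutting class value of its associated free loop in $D_0$ and contains that many periodic leaves, so this cardinality equals $\sum_i c(\lambda_i^{(0)})$ over free loops of $D_0$. It remains to verify that $\sum_i c(\lambda_i) \pmod{d-1}$ is preserved by each reduction type. Type III trivially subtracts a multiple of $d-1$; Type II (which can create new free loops when $e_i^m = e_i^s$) and Type I* alter the sum by a multiple of $d-1$, as one checks directly from the formulas for $\gamma(e_i)$ and from the fact that the relevant edges come in batches of $d$. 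Finally, Lemma \ref{lem: translating balls}\ref{item:number of balls is preserved mod d-1} guarantees that any other partition of $\eell(g)$ into balls has the same cardinality modulo $d-1$. The main obstacle is exactly this last bookkeeping step: parts (1)--(3) are direct applications of the existing dictionaries, whereas tracking the free-loop sum through Type II reductions requires a routine but delicate case analysis.
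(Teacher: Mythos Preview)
Your argument is correct, but it is a genuinely different route from the paper's. The paper's proof is essentially a one-liner that uses Theorem~\ref{thm:hyperblolic_conjugacy} as a black box: it takes the *-reduced diagram $(D,r,c)$, builds from it (via Proposition~\ref{prop: a reduced BM-diagram comes from a revealing pair}) an element $h$ for which the statement is immediate from the remark following that proposition, and then transfers the conclusion to $g$ by observing that $g_h$ and $h_h$ are conjugate and that attracting lengths, periods, etc.\ are conjugacy invariants. Your approach instead stays inside a single revealing pair and tracks the loops of the basic BM-diagram through the *-reduction process, combining the dictionary with Lemma~\ref{lem:leaves of a revealing pair}. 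This is more hands-on but has the advantage of not invoking the main conjugacy theorem at all.

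Two comments. First, the ``routine but delicate'' Type~II bookkeeping in item~(4) is unnecessary: by Lemma~\ref{lem:basic_BM_diagram_of_revealing_pair_II-reduced} the basic BM-diagram of a revealing pair is already II-reduced, and by Lemma~\ref{lem:reductions_do_not_destroy_reducedness} it stays II-reduced under Type~I* reductions. Hence only Type~I* and Type~III reductions occur. In a II-reduced diagram a Type~I* reduction cannot create a free loop (that would force $e_s=e_m$, i.e.\ an edge from a merge to a split), so the free-loop sum is affected only by Type~III, which manifestly subtracts a multiple of $d-1$. Second, the remark you cite is stated in the direction ``reduced $D \rightsquigarrow$ revealing pair $P$''; you use it in the converse direction ``revealing pair $P \rightsquigarrow$ basic diagram $D_0$''. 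This is fine, since unwinding Definition~\ref{def:basic BM-diagram} shows the dictionary is bijective, but it would be cleaner to say so explicitly.
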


\begin{proof}
    Clearly the theorem is true for the element $h$ constructed from $D$ in the proof of Proposition \ref{prop: a reduced BM-diagram comes from a revealing pair}. It is also not difficult to see that attracting lenghts, periods, etc. are invariant under conjugation. Since $g_h$ is conjugate to $h_h$ by Theorem \ref{thm:hyperblolic_conjugacy}, we are done.
\end{proof}

Note also that the subgraphs connecting split loops to merge loops indicate how wandering points are travelling from the repelling to the attracting points.

The following corollary rises from the classical fact that translations in a regular tree are conjugate if and only if their translation lengths agree.

\begin{corollary}\label{cor:hyperbolic conjugate to AutT}
    Let $\T=\T_{d,d+1}$ be the (non rooted) $d$-regular tree.
	A hyperbolic element in $\AAutT$ is conjugate to a translation of translation length $n$ in $\AutT$ if and only if the *-reduced BM-diagram $(D,c,r)$ of one, and hence every, revealing pair has the following form.
	The graph $D$ consists of exactly one split loop $(e_1,\dots,e_n)$ consisting of $n$ edges, exactly one merge loop $(f_1,\dots,f_n)$ consisting of $n$ edges, and for every split $o(e_i)$ in the split loop all $d-1$ outgoing edges except $e_i$ end in $o(f_i)$.
	The cohomology class $c$ is represented by $\gamma \colon \Edge(D) \to \mathbb{Z}$ with $\gamma(e_1) = \gamma(f_1) = 1$ and $\gamma(e)=0$ for all other edges.
\end{corollary}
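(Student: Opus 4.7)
The plan is to apply Theorem~\ref{thm:hyperblolic_conjugacy}: since conjugacy of hyperbolic elements is detected by the *-reduced BM-diagram up to rotation, it suffices to exhibit one translation $\tau_n \in \AutT$ of length $n$ and show that its *-reduced BM-diagram has the described form. Both directions of the corollary then follow at once.

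Let $\ldots, v_{-1}, v_0, v_1, \ldots$ be the axis of $\tau_n$ with $v_0$ the root of $\T$ and $\tau_n(v_i) = v_{i+n}$, and take
\[
T_1 = \bigcup_{i=-n}^{n-1} (\text{caret at } v_i), \qquad T_2 = \bigcup_{i=0}^{2n-1} (\text{caret at } v_i),
\]
together with $\kappa = \tau_n|_{\LL T_1}$. A direct inspection shows that $[\kappa, T_1, T_2]$ is a revealing pair: it has a unique repeller chain $(v_{-n-1}, v_{-1})$ of period~$1$ and repelling length~$n$, a unique attractor chain $(v_n, v_{2n})$ of period~$1$ and attracting length~$n$, and $n(d-1)$ wandering chains $(u_i^{(j)}, u_{i+n}^{(j)}, u_{i+2n}^{(j)})$, where $u_i^{(j)}$ denotes the $j$-th off-axis child of $v_i$ and $i \in \{-n, \ldots, -1\}$, $j \in \{1, \ldots, d-1\}$. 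By Theorem~\ref{thm:read off dynamics} the reduced BM-diagram therefore has a unique merge loop of $n$ merges and cutting class value~$1$, a unique split loop of $n$ splits and cutting class value~$1$, and no free loops.

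What remains is to identify the off-loop edges, which I would do by explicitly performing the Type~II reductions that implement the hourglass reduction of the basic diagram. The root hourglass has interior $T_1 \cap T_2 = $ the carets at $v_0, \ldots, v_{n-1}$, and after step~3 of the basic construction the only merge-to-split edge is $e\colon v'_0 \to v_0$. Reducing $e$ pairs the $d+1$ incoming edges of $v'_0$ against the $d+1$ outgoing edges of $v_0$ in plane order, producing the edge $v_{-n} \to v_{-1}$ (negative-axis pair), the edge $v'_1 \to v_1$ (positive-axis pair), and $d-1$ non-loop edges $v_{-n} \to v'_n$ (off-axis pairs); iterating the Type~II reduction at the successive merge-to-split edges $v'_k \to v_k$ for $k = 1, \ldots, n-1$ then produces, at step~$k$, the next interior edge $v'_{k+1} \to v_{k+1}$ together with $d-1$ non-loop edges $v_{-n+k} \to v'_{n+k}$, and at step $n-1$ the positive-axis pair yields the merge-to-merge edge $v'_n \to v'_{2n-1}$ which, with the $n-1$ surviving $T_2$-axis edges, closes up the merge loop; similarly $v_{-n} \to v_{-1}$ with the $n-1$ surviving $T_1$-axis edges closes up the split loop. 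Labelling $s_i = v_{-i}$ and $m_i = v'_{2n-i}$ gives exactly the claimed form: a split loop and merge loop of length $n$, with $d-1$ non-loop edges $s_i \to m_i$ for each $i$. One checks that no further Type~II, Type~I*, or Type~III reduction applies (the parallel edges between each $s_i$ and $m_i$ have multiplicity $d-1 < d$, and there are no free loops), and that the reduction-generated cohomology, which assigns $1$ to every edge created by a Type~II reduction and $0$ to every surviving axis edge, differs from the claimed representative by a coboundary supported on the $2n$ surviving vertices. The main obstacle is the notational bookkeeping through the iterated hourglass reductions, though the cyclic $\mathbb{Z}/n$-symmetry of the translation reduces the verification to a single axis-phase step.
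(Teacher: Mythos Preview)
Your proposal is correct and follows exactly the route the paper intends: the paper states the corollary as a consequence of the classical fact that two translations in $\Aut(\Tdd)$ are conjugate if and only if their translation lengths agree, together with Theorem~\ref{thm:hyperblolic_conjugacy}, and illustrates the resulting diagram in Figure~\ref{fig:tree_transl} without writing out the computation. You simply make this explicit by choosing a concrete revealing pair for a length-$n$ translation and carrying out the hourglass reduction; the resulting diagram, the check of *-reducedness via the $d-1$ parallel edges, and the coboundary adjustment of the cohomology representative are all correct.
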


What the BM-diagram from Corollary \ref{cor:hyperbolic conjugate to AutT} looks like is illustrated in Fig.~\ref{fig:tree_transl}.
\begin{figure}
    \centering
    \includegraphics[scale=1]{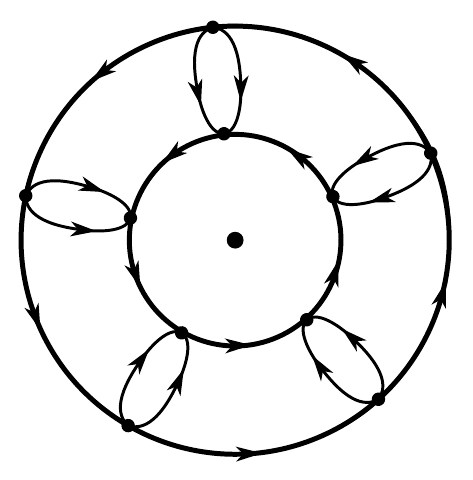}
    \caption{BM-diagram for a length $5$ translation in $\Aut(\T_{3,4})$.}
    \label{fig:tree_transl}
\end{figure}

In view of Remark \ref{rem:elliptic Aut(T) conjugates not closed}, a possible counterpart of this corollary for elliptic elements seems to be more complicated.

\bibliographystyle{alpha}
\bibliography{references}
\end{document}